\documentclass[12pt]{amsart}
\usepackage{amsmath,amsthm,amssymb}
\usepackage{enumerate}
\usepackage{graphicx}
\usepackage{float}
\usepackage{epstopdf}
\usepackage{cite}
\usepackage[margin=1in]{geometry}
\usepackage{tikz}
\usetikzlibrary{matrix}
\usetikzlibrary{arrows}
\usetikzlibrary{positioning}
\usepackage{overpic}
\usepackage{mathtools}
\usepackage{subcaption}

\theoremstyle{plain}
\newtheorem{thm}{Theorem}[section]
\newtheorem{cor}[thm]{Corollary}
\newtheorem{lem}[thm]{Lemma}
\newtheorem{prop}[thm]{Proposition}
\newtheorem{conj}[thm]{Conjecture}
\theoremstyle{definition}
\newtheorem{defn}[thm]{Definition}
\theoremstyle{remark}
\newtheorem{rem}[thm]{Remark}
\newtheorem{ex}[thm]{Example}
\newtheorem*{notation}{Notation}
%
%

\newcommand{\Z}{\mathbb{Z}}
\newcommand{\Q}{\mathbb{Q}}

\newcommand{\hfk}{\mathit{HFK}}
\newcommand{\cfk}{\mathit{CFK}}
\newcommand{\cH}{\mathcal{H}}
\newcommand{\sln}{\mathfrak{sl}_{n}}
\DeclareMathOperator{\unknot}{unknot}
\DeclareMathOperator{\gr}{gr}
\DeclareMathOperator{\rank}{rank}
\DeclareMathOperator{\alg}{alg}
\DeclareMathOperator{\spin}{Spin}

\begin{document}

\title{A Family of $\mathfrak{sl}_{n}$-like Invariants in knot Floer homology}

\author{Nathan Dowlin}
\thanks{The author was partially supported by NSF grant DMS-1606421}
\maketitle

\begin{abstract} We define and study a family of link invariants $\hfk_{n}(L)$. Although these homology theories are defined using holomorphic disc counts, they share many properties with $\sln$ homology. Using these theories, we give a framework that generalizes the conjectured spectral sequence from Khovanov homology to $\delta$-graded knot Floer homology. In particular, we conjecture that for all links $L $ in $ S^3$ and all $n\ge 1$, there is a spectral sequence from the $\sln$ homology of $L$ to $\hfk_{n}(L)$.
\end{abstract}

\tableofcontents
\normalsize

\vspace{-5mm}

\section{Introduction} One of the most active topics of research in knot theory is understanding the relationships between the quantum invariants of Khovanov \cite{Khov1} and Khovanov-Rozansky \cite{KR}, \cite{KR2} and the Heegaard Floer invariants of Ozsv\'{a}th-Szab\'{o} \cite{OS1} and Rasmussen \cite{Rasmussen2}. There are two main conjectures on this topic:

\begin{conj}[\hspace{1sp}\cite{Rasmussen3}]

For any knot $K$ in $S^{3}$, there is a spectral sequence from $\overline{Kh}(K)$ to $\delta$-graded $\widehat{\hfk}(K)$, where $\overline{Kh}(K)$ is the reduced Khovanov homology of $K$ and $\widehat{\hfk}(K)$ is the reduced knot Floer homology of $K$.

\end{conj}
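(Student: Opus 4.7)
The plan is to realize both sides as pages of a single spectral sequence coming from a filtered chain complex that one can build directly on the Floer side. A natural candidate is the oriented cube of resolutions model for knot Floer homology due to Ozsv\'ath and Szab\'o: its vertices carry singular knot Floer complexes for the fully resolved singular links, and its edges carry chain maps induced by holomorphic disc counts. Filtering this total complex by cube degree should isolate the vertex complexes on the $E_1$ page and push the Khovanov-type edge behavior to $E_2$.

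First, I would identify the $\delta$-graded singular $\widehat{\hfk}$ at each vertex with the Khovanov vertex algebra $A^{\otimes c}$, where $A = \Z[x]/x^2$ and $c$ is the number of circles in the corresponding resolution. This is a local computation of $\widehat{\hfk}$ for singular unlinks and should be tractable. Second, I would compare the Floer edge maps to the Khovanov merge and split maps on the associated graded, aiming to show that on the next page they reproduce the Khovanov differential. Together these two steps would identify an early page of the spectral sequence with $\overline{Kh}(K)$, while the total complex computes $\widehat{\hfk}(K)$ by construction.

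The main obstacle, in my view, is the edge comparison. Matching local algebras at vertices is relatively concrete, but turning the Floer edge maps into filtered deformations of Khovanov maps requires explicit holomorphic disc counts in the Heegaard diagrams arising from singular crossing resolutions, and coordinating those counts --- with coherent sign and basis choices --- across the entire cube. This kind of coherent edge identification has been the bottleneck for every previous spectral sequence of this flavor, and I would expect the same here. Invariance of the spectral sequence under Reidemeister moves should then follow from the invariance of $\widehat{\hfk}$ once the construction is in place.
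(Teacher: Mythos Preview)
The statement you are attempting to prove is recorded in the paper as an open \emph{conjecture} (due to Rasmussen), not a theorem; the paper gives no proof. Instead it builds a framework---the invariants $\hfk_n$---in which this becomes the $n=2$ case of the more general Conjecture~\ref{conj5.1}, and isolates a specific technical statement (Conjecture~\ref{conj3.9}: an oriented skein exact triangle for $C_{F(n)}$ whose edge maps agree with the $\sln$ edge maps) from which all of the spectral sequences would follow. So there is no proof to compare against, only a suggested route.

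Your outline is in the same spirit as that route, but the vertex step has a genuine gap. The Ozsv\'ath--Szab\'o oriented cube has vertices that are \emph{completely singular} diagrams---planar graphs with 4-valent singular points coming from singularized crossings---not the disjoint unions of circles appearing in Khovanov's unoriented cube. There is no ``number of circles $c$'' at a generic vertex, and the singular knot Floer homology of such a diagram is not $A^{\otimes c}$; it carries nontrivial Koszul data attached to the singular points. The paper's replacement for your vertex step is Lemma~\ref{isolemma}: for completely singular $S$ one has $H_{F(n)}(S)\cong H_n(S)$, the Khovanov--Rozansky $\sln$ homology of the singular graph, and that is the correct local model. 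Your diagnosis of the edge-map comparison as the main obstacle is accurate and matches the paper's own assessment---it is exactly the content of Conjecture~\ref{conj3.9}, and the paper notes that the usual holomorphic-triangle argument fails because the $\mathbf{z}$ basepoints play different roles in the three diagrams. But the vertex identification you propose would have to be replaced by the singular-link one before the edge question is even well-posed.
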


\begin{conj}[\hspace{1sp}\cite{Gukov}]

For any link $L$ in $S^{3}$, there are spectral sequences from $\overline{H}(L)$ to $\widehat{\hfk}(L)$ and from $H(L)$ to $\hfk(L)$, where $\overline{H}(L)$ is the reduced HOMFLY-PT homology of $L$, $H(L)$ is the unreduced HOMFLY-PT homology of $L$, and $\hfk(L)$ is the unreduced knot Floer homology of $L$.

\end{conj}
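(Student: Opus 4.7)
The plan is to use the family of invariants $\hfk_n$ as a ``ladder'' between $\sln$ homology at each finite $n$ and HOMFLY-PT homology in the $n\to\infty$ limit. The argument breaks into three stages: a finite-$n$ spectral sequence, a stabilization in $n$ on the Khovanov--Rozansky side, and a matching stabilization on the Heegaard Floer side.

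First, for each fixed $n\ge 1$ I would establish the spectral sequence from $\sln$ homology to $\hfk_n(L)$ conjectured in the abstract. I would pursue this via a cube-of-resolutions model, presenting both theories as totalizations of cubes whose vertices are labelled by $\sln$ matrix factorizations and singular link Floer complexes, respectively, associated to trivalent graph resolutions of $L$. A chain map identifying the two state spaces at each vertex (up to a polynomial correction factor), together with a check that the cube edges agree up to homotopy, would yield a filtered total complex whose $E_1$ page is $\sln(L)$ and whose limit is $\hfk_n(L)$.

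Next, I would carry out the stabilization in $n$. On the $\sln$ side, the $\sln$ polynomials are specializations of HOMFLY-PT, and categorified versions of this specialization \cite{KR2} yield spectral sequences from $H(L)$ to $\sln(L)$. On the $\hfk_n$ side, one needs the analogous statement that $\hfk_n(L)$ stabilizes as $n$ grows to recover $\hfk(L)$ after a change of variables; here the holomorphic-disc definition of $\hfk_n$ would be used to show that each vertex complex stabilizes once $n$ exceeds a combinatorial threshold depending on $L$. Assembling the three spectral sequences, or more cleanly constructing a single bifiltered complex, would then produce $H(L)\Rightarrow\hfk(L)$, with a reduction at a basepoint giving $\overline{H}(L)\Rightarrow\widehat{\hfk}(L)$.

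The main obstacle, by a wide margin, is constructing the vertex-wise chain maps in the first stage. The $\sln$ side is rigidly algebraic (Koszul complexes of matrix factorizations), while $\hfk_n$ is defined via holomorphic disc counts, so even formulating a candidate map requires a chain-level model of $\hfk_n$ with enough combinatorial rigidity, most likely a grid-like or multi-diagram presentation compared with matrix factorizations through an intermediate algebraic ``master complex''. Verifying that such a map commutes with the cube edges amounts to identifying holomorphic disc counts with explicit Koszul differentials, a symplectic-algebraic bridge that remains open in general. A secondary difficulty is the Floer-side stabilization in $n$, which requires controlling how the holomorphic invariants depend on $n$ uniformly in the link diagram.
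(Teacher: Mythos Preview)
The statement you are trying to prove is a \emph{conjecture} in the paper (Conjecture~1.2, attributed to \cite{Gukov}); the paper does not prove it and does not claim to. There is therefore no ``paper's own proof'' to compare your proposal against. What the paper does is introduce the family $\hfk_n$, prove the spectral sequence $\hfk(L)\Rightarrow\hfk_n(L)$ (property~(e)), and formulate the finite-$n$ conjecture $H_n(L)\Rightarrow\hfk_n(L)$ (Conjecture~1.6 / Conjecture~5.1), noting that the latter would follow from a conjectural skein exact triangle (Conjecture~3.9). Your ``first stage'' is exactly this open conjecture; you correctly identify it as the main obstacle, but you should be aware that it is not a technical gap to be filled in but the central open problem of the paper.

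Even granting the first stage, your ``ladder'' argument has a genuine structural problem. Spectral sequences do not compose, and the arrows you need point the wrong way: Rasmussen gives $H(L)\Rightarrow H_n(L)$, the paper gives $\hfk(L)\Rightarrow\hfk_n(L)$, and you are hypothesizing $H_n(L)\Rightarrow\hfk_n(L)$. From the triple
\[
H(L)\Rightarrow H_n(L)\Rightarrow \hfk_n(L)\Leftarrow \hfk(L)
\]
you cannot formally extract $H(L)\Rightarrow\hfk(L)$; the last arrow goes backwards. Your proposed fix is a ``stabilization'' $\hfk_n(L)\to\hfk(L)$ for large $n$, but the paper provides no such statement, and the definition $V_i=(U_{a(i)}^n-U_{b(i)}^n)/(U_{a(i)}-U_{b(i)})$ does not obviously admit a limit recovering the full $\cfk_{U,V}$ or $\cfk^-$ complex. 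The known stabilization is on the Khovanov--Rozansky side (for reduced homology of knots, $\overline H(K)\cong\overline H_n(K)$ for $n\gg 0$), not on the Floer side. So as written, your three-stage plan does not assemble into a proof even schematically; at best it would give $H(L)\Rightarrow\hfk_n(L)$ for each $n$, which is weaker than the target.
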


These conjectures are somewhat incomplete, as there are more invariants on the quantum side for which there are no analogs on the knot Floer side - in particular, there are no analogs of the $\sln$ homology of Khovanov and Rozansky. In this paper we define a family of knot homology theories $\hfk_{n}(L)$ where $L$ is a null-homologous link in a 3-manifold $Y$ which fill in this gap. There are also reduced versions $\widehat{\hfk}_{n}(L)$, with $\widehat{\hfk}_{2}(K)$ isomorphic to $\delta$-graded $\widehat{\hfk}(K)$ for any knot $K$.

Since $\mathfrak{sl}_{2}$ homology is isomorphic to Khovanov homology and HOMFLY-PT homology can be viewed as a ``limit" of $\sln$ homology, these theories allow us to interpolate between the two conjectures. HOMFLY-PT and $\sln$ homology are only defined for knots in $S^{3}$, so we hope that these invariants will help with the long-term project of generalizing $\sln$ and HOMFLY-PT homology to links in more general 3-manifolds.

The knot Floer complex comes in many versions, but the most general is the one which counts discs which pass through all basepoints in the Heegaard diagram, each with their own variable. We will refer to this curved complex as the  \emph{master complex} (this complex is studied in detail by Zemke in \cite{Zemke2}). The invariants $\hfk_{n}(L)$ are defined using the master complex with two additional ingredients:

\vspace{1mm}

$\bullet$ The Heegaard diagram comes with a puncture $p$ and an additional $\alpha$ and $\beta$ curve.

$\bullet$ We make some identifications in the ground ring.

\vspace{1mm}

We will describe the master complex and the construction of $\hfk_{n}(L)$ in the next section, but in the case where $L$ is a knot and the Heegaard diagram has a unique $w$ and $z$ basepoint, both are particularly easy to describe. Given a punctured Heegaard diagram $\cH$ with a single pair of basepoints, the master complex $\cfk_{U,V}(\cH)$ is defined over $\Q[U,V]$ with differential

\[  \partial_{U,V}(x) = \sum_{y \in \mathbb{T_{\alpha}} \cap \mathbb{T_{\beta}}} \sum_{\substack{\phi \in \pi_{2}(x,y) \\ \mu(\phi)=1 \\ n_{p}(\phi)=0}}    \# \widehat{\mathcal{M}}(\phi) U^{n_{w}(\phi)}V^{n_{z}(\phi)} y \]

\noindent
where $p$ is the puncture on the Heegaard diagram. The identification in the ground ring to obtain $\cfk_{n}(K)$ is given by $V=nU^{n-1}$. Thus, the complex $\cfk_{n}(K)$ is a free $\Q[U]$-module with differential

\[  \partial_{n}(x) = \sum_{y \in \mathbb{T_{\alpha}} \cap \mathbb{T_{\beta}}} \sum_{\substack{\phi \in \pi_{2}(x,y) \\ \mu(\phi)=1 \\ n_{p}(\phi)=0}}    \# \widehat{\mathcal{M}}(\phi) U^{n_{w}(\phi)}(nU^{n-1})^{n_{z}(\phi)} y \]

\noindent
and $\hfk_{n}(K)$ is the homology of this complex. The reduced theories $\widehat{\hfk}(L)$ are obtained by removing the puncture in the Heegaard diagram and setting $U=0$.

\begin{rem}

When the Heegaard diagram has a unique pair of basepoints $(w,z)$, the master complex is a true complex. However, for multi-pointed Heegaard diagrams, it is a curved complex with $\partial_{U,V}^{2}=\omega I$. In this more general case, the identifications in the ground ring are necessary for $\partial_{n}^2=0$. After these identifications, the differential is no longer homogeneous with respect to the Maslov and Alexander gradings, but it is homogeneous with respect to a linear combination of the two given by $\gr_{n} = -nM + 2(n-1)A $.

\end{rem}

\begin{thm}

The homology theories $\hfk_{n}(L)$ and $\widehat{\hfk}_{n}(L)$ are graded link invariants. 
\end{thm}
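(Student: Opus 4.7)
The plan is to proceed in the standard two-stage fashion for Heegaard Floer invariants: first verify that $\cfk_n(\cH)$ is genuinely a chain complex with a well-defined grading, and then show that its homology is unchanged by the moves relating any two admissible punctured Heegaard diagrams for $(Y,L)$.

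First, I would verify that $\partial_n^2=0$ on the nose. By the master-complex formula one has $\partial_{U,V}^2 = \omega \cdot I$, where $\omega$ is a sum of curvature terms indexed by the basepoints; each term records a disc covering a single $w_i$ and a single $z_i$, and thus contributes a monomial of the form $U_iV_i$ (with appropriate signs) where the $U_i,V_i$ are the basepoint variables. The identification in the ground ring is chosen precisely so that after sending every $V_i \mapsto n U_i^{n-1}$ (and identifying the $U_i$ among themselves according to the link components), each curvature term becomes a multiple of $U_i^n - U_i^n$ and so vanishes. This is the step that makes the particular polynomial $V=nU^{n-1}$ inevitable; it is exactly the syzygy one needs, and is analogous to the potential condition in matrix factorization presentations of $\sln$ homology. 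Once this is done, homogeneity of $\partial_n$ with respect to $\gr_n = -nM + 2(n-1)A$ is an immediate computation: a disc $\phi$ contributes $U^{n_w(\phi)}(nU^{n-1})^{n_z(\phi)}$, which shifts $\gr_n$ by $-n\cdot \mu(\phi) - (-2n)n_w(\phi) - (2n-2)n_z(\phi) = -n + 2n\, n_w(\phi) - (2n-2)n_z(\phi)$ once combined with the grading shifts of $M$ and $A$ across $\phi$; these cancel to give a net shift of $-1$, as required.

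Second, I would address invariance of the homology under changes of Heegaard data, following the template of Ozsváth--Szabó but adapted to the punctured setting. The moves to check are (i) isotopy of $\alpha$ and $\beta$ curves avoiding all basepoints and the puncture $p$, (ii) handleslides among $\alpha$ (and symmetrically among $\beta$) curves avoiding $p$ and the basepoints, and (iii) index-$1$/index-$2$ stabilization. Each move gives a quasi-isomorphism of the master complex $\cfk_{U,V}$ that is equivariant with respect to the basepoint variables; tensoring with $\Q[U]$ along $V=nU^{n-1}$ preserves quasi-isomorphism because the map is module-linear over $\Q[U,V]$. Homogeneity with respect to $\gr_n$ is inherited from homogeneity of the original map with respect to $M$ and $A$. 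For the reduced version, setting $U=0$ after removing $p$ is likewise functorial, so the same maps descend.

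Third, I would show independence from the auxiliary data specific to this construction: the location of the puncture $p$ and the choice of the extra $\alpha$ and $\beta$ curves. Moving $p$ within a component of $\Sigma \setminus (\boldsymbol\alpha \cup \boldsymbol\beta)$ is automatic; moving it across a curve can be achieved by an isotopy of that curve through $p$, which can be decomposed into standard Heegaard moves avoiding $p$ together with a change of the extra curve pair. For the extra $\alpha$ and $\beta$ curves, any two admissible choices can be connected by a sequence of isotopies and handleslides, and the triangle map realizing the handleslide extends to the punctured setting because the extra curves contribute only small triangles whose counts reduce to the usual top-dimensional generator.

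The main obstacle is the interaction between the puncture and stabilization/handleslide moves: one must ensure that all the standard chain homotopies used in the Ozsváth--Szabó invariance proof can be chosen to miss $p$, so that the constraint $n_p(\phi)=0$ is preserved throughout. The model case of \cite{Zemke2} handles this kind of enhancement for the master complex, and the remaining content is to check that passing to the quotient $V=nU^{n-1}$ respects all these maps, which it does because the quotient is induced by a $\Q[U,V]$-algebra map.
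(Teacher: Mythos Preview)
Your outline has a genuine gap and a conceptual error about the construction itself.

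\textbf{The substitution.} In the multipointed setting the paper does \emph{not} set $V_i = nU_i^{n-1}$ and then identify the $U_i$ on a common link component. The identification is
\[
V_i \;=\; \frac{U_{a(i)}^{\,n} - U_{b(i)}^{\,n}}{U_{a(i)} - U_{b(i)}},
\]
with all the $U_i$ kept distinct in the ground ring. With this substitution the curvature $\sum_i (U_{a(i)}-U_{b(i)})V_i$ becomes $\sum_i (U_{a(i)}^n - U_{b(i)}^n)$, which telescopes to zero because $a$ and $b$ are permutations of $\{1,\dots,k\}$; it is not that each term separately becomes $U_i^n - U_i^n$. Your version (identify the $U_i$ on a component) also kills the curvature, but it defines a different complex, and you would then have to argue separately that this quotient agrees with $\cfk_n$. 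The correct substitution also depends on the combinatorics $a(i),b(i)$ of how basepoints are connected, which is exactly what makes the next point delicate.

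\textbf{The missing move.} You check isotopy, handleslides, and index-$1$/index-$2$ stabilization, but you omit $(0,3)$-stabilization entirely. This is the move that changes the number of basepoints, and any two Heegaard diagrams for $L$ are related only once you allow it. It is also the step where the argument is not simply ``tensor the master-complex homotopy equivalence along the quotient'': under $(0,3)$-stabilization the functions $a(\cdot),b(\cdot)$ change (the new $w_{k+1}$ is inserted between $z_i$ and $w_{a(i)}$), so the very relation defining $\cfk_n$ from $\cfk_{U,V}$ is different before and after. The paper handles this by using the explicit $2\times 2$ description of $\partial_{U,V}$ on the stabilized diagram (Lemma~\ref{30stab}), passing to $\cfk_n$ with the stabilized substitution, and then observing that collapsing the cone along $U_{a_1(i)}-U_{k+1}$ forces $U_{k+1}=U_{a_1(i)}$ and hence recovers the unstabilized substitution for $V_i$. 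Without this step your proof does not connect diagrams with different numbers of basepoints, and in particular does not establish the $\Q[U_1,\dots,U_l]$-module invariance.

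\textbf{Minor correction.} The differential $\partial_n$ is homogeneous of degree $n$ in $\gr_n$, not $-1$; redo the bookkeeping with $V_i$ replaced by a degree-$(n-1)$ polynomial in the $U$'s.
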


\noindent
\textbf{Properties of $\hfk_{n}$}:

\vspace{2mm}

a) $\hfk_{n}( \unknot)=\Q[U]/U^{n}=0$.

\vspace{.75mm}
b) If $L $ is a link in $ Y$, $\hfk_{1}(L) \cong \widehat{HF}(Y)$.

\vspace{1.25mm}
c) If $L_{1}$, $L_{2}$ are links in $Y_{1}, Y_{2}$, then 
\[\hfk_{n}( L_{1} \sqcup L_{2}) \cong \hfk_{n}(L_{1}) \otimes \hfk_{n}(L_{2}) \]

\hspace{5mm}where $L_{1} \sqcup L_{2}$ is a link in $Y_{1}\#Y_{2}$.

\vspace{1mm}
d) For all $l$-component links $L$, $\hfk_{n}(L)$ is a finitely generated module over \[\Q[U_{1},...,U_{l}]/(U_{1}^{n}=...=U_{l}^{n}=0)\] 

e) For all links $L$ and all $n\ge1$, there is a spectral sequence from $\hfk(L)$ to $\hfk_{n}(L)$.

\vspace{3mm}

\noindent
All of these properties are also satisfied by the $\sln$ homology of Khovanov and Rozansky \cite{KR}, where the last property corresponds to Rasmussen's spectral sequences from HOMFLY-PT homology to $\sln$ homology \cite{Rasmussen}.

When $Y=S^{3}$, $\hfk_{n}$ readily extends to singular links. Let $S$ be a planar diagram for a singular link in $S^3$. We say that $S$ is \emph{completely singular} if it has no crossings. When $S$ is completely singular, the definition of $\hfk_{n}(S)$ agrees with the one given in \cite{Me2}, where we showed the following:

\begin{thm}[\hspace{1sp}\cite{Me2}]

Let $S$ be a completely singular braid diagram. There is an isomorphism of graded vector spaces 
\[  \hfk_{n}(S) \cong H_{n}(S)  \]

\noindent
where $H_{n}(S)$ is the unreduced $\sln$ homology of $S$.

\end{thm}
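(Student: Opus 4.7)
The plan is to exhibit an explicit chain-level isomorphism after choosing a Heegaard diagram tailored to the singular braid $S$. First I would construct a multi-pointed Heegaard diagram $\mcH_S$ placed in a standard neighborhood of the closed braid, with the puncture $p$ on the braid axis, paired basepoints $(w_i,z_i)$ on each strand, and a small local pattern of $\alpha$ and $\beta$ curves modeling each singular vertex. This gives a decomposition of $\mcH_S$ into elementary tangle pieces, one per vertex, glued along strip regions between consecutive braid levels. The intersection points $\mathbb{T}_\alpha \cap \mathbb{T}_\beta$ are then in bijection with MOY-type states on $S$, which also enumerate a spanning set for the Khovanov--Rozansky matrix factorization computing $H_n(S)$. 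In particular, as modules over the ground ring $\Q[U_1,\dots,U_l]/(U_i^n)$, both sides have the same rank.

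The core of the argument, and the main obstacle, is matching differentials. On the $\sln$ side the differential is the Koszul-type differential associated to a Landau--Ginzburg potential, whose local form at each singular vertex is given by explicit symmetric-function operators. On the $\hfk_n$ side the master differential $\partial_{U,V}$ is a holomorphic disc count. The strategy is to degenerate $\mcH_S$ along the interfaces between singular vertices so that, by a neck-stretching and Maslov-index argument, only discs supported in a neighborhood of a single vertex contribute. These local discs are bigons and rectangles whose signed counts can be computed directly from the domain geometry. After the substitution $V = nU^{n-1}$, the coefficient $n$ appearing in $nU^{n-1} = \tfrac{d}{dx}x^n \big|_{x=U}$ mirrors the derivative structure of the potential controlling the Koszul differential in $H_n(S)$, and reproduces edge by edge the contributions arising from the $z$-basepoints.

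To conclude I would verify that the combined grading $\gr_n = -nM + 2(n-1)A$ agrees, up to an overall shift fixed by the unknot, with the quantum $q$-grading on $H_n(S)$. This is a direct computation on generators: one compares the Maslov and Alexander shifts imposed by each basepoint crossing to the quantum grading shifts prescribed by MOY calculus at each edge of $S$. Combining the generator bijection, the matching of local differentials, and the grading identification yields the claimed graded vector space isomorphism $\hfk_n(S) \cong H_n(S)$.
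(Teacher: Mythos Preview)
The paper does not prove this theorem; it is quoted from \cite{Me2} and restated as Lemma~\ref{isolemma}. What the paper \emph{does} do, in Section~\ref{section3}, is set up the singular complex whose homology is being compared, and that setup exposes a genuine gap in your proposal.

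Your sketch treats the singular diagram as if it were an ordinary multi-pointed Heegaard diagram with the single substitution $V=nU^{n-1}$. But for a completely singular braid the essential structure lives at the singular vertices, and the complex in question is not just $\cfk_n$ of a Heegaard diagram. At each singular point there is a \emph{double} basepoint $z_{ij}$ carrying its own variable $W_{ij}$, and the relevant substitution is $W_{ij}=p_2(i,j,n)$, where $p_1,p_2$ are the Khovanov--Rozansky polynomials satisfying
\[
L(i,j)\,p_1(i,j,n)+Q(i,j)\,p_2(i,j,n)=U^{n}_{a_1}+U^{n}_{a_2}-U^{n}_{b_1}-U^{n}_{b_2}.
\]
Moreover the complex is $C_{F(n)}(S)=\cfk_n(S)\otimes_R K_n(S)$, where $K_n(S)$ is the matrix factorization built from the pairs $(L(i,j),p_1(i,j,n))$. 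None of this appears in your outline: you have no $W$-variables, no $p_1,p_2$, and no Koszul factor. The derivative heuristic $V=nU^{n-1}=\tfrac{d}{dU}U^n$ is the specialization at a \emph{smooth} basepoint where $U_{a(i)}=U_{b(i)}$; at a $4$-valent vertex the four edge variables are distinct and the potential structure is governed by $L$ and $Q$, not by a single-variable derivative.

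Concretely, your local disc count after ``$V=nU^{n-1}$'' cannot reproduce the $\sln$ vertex matrix factorization, because the latter has potential contributions split between the Heegaard-theoretic $Q(i,j)p_2$ term (coming from degenerations through $z_{ij}$) and the algebraic $L(i,j)p_1$ term (coming from $K_n(S)$). Without the $K_n(S)$ tensor factor your candidate complex does not even satisfy $\partial^2=0$ in general, let alone match $H_n(S)$. The MOY-state bijection and grading check you describe are plausible ingredients, but the differential-matching step needs to be carried out for the correct complex $\cfk_n(S)\otimes K_n(S)$ with the $W_{ij}=p_2$ substitution, not for a smooth-basepoint model.
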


This theorem inspires the following conjectured relationship between $\hfk_{n}(L)$ and $H_{n}(L)$, where $L$ is a link in $S^{3}$.

\begin{conj}

Following the conventions of \cite{Rasmussen}, let $\mathbf{gr}_{n}, \mathbf{gr}_{v}$ be the bigrading on $\sln$ homology. For each $n$, there are spectral sequences \[H_{n}(L) \to \hfk_{n}(L) \hspace{2mm}\text{ and }\hspace{2mm} \overline{H}_{n}(L) \to \widehat{\hfk}_{n}(L) \]

\vspace{1mm}
\noindent
where the grading on $H_{n}(L)$ is given by $\mathbf{gr}_{n}+\frac{n}{2}\mathbf{gr}_{v}$. Each differential in the spectral sequence decreases this grading by $n$.

\end{conj}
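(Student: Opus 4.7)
The strategy is a cube-of-resolutions argument mirroring the Khovanov--Rozansky construction of $H_n(L)$. First, present $L$ as the closure $\hat\beta$ of a braid $\beta$ with $c$ crossings. Resolving each crossing in the two ways used by Khovanov--Rozansky --- the oriented smoothing and the singular wide edge --- produces a $c$-dimensional cube $\{S_v\}_{v\in\{0,1\}^c}$ of completely singular braid diagrams. By construction, $H_n(L)$ is the homology of the total complex of the cube whose vertex at $v$ is $H_n(S_v)$ and whose edges are the standard zip and unzip morphisms of matrix factorizations.

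The central step is to realize $\cfk_n(L)$ as the total complex of an analogous cube on the knot Floer side, with vertex complexes $\cfk_n(S_v)$. For this one wants a crossing-local mapping-cone decomposition: the portion of $\cfk_n$ near each crossing should split as a mapping cone between the two resolutions of that crossing. If such a local decomposition is available, iterating over the $c$ crossings via the standard argument for cubes of exact triangles yields a cube whose total complex is chain-homotopy equivalent to $\cfk_n(L)$ and whose vertex complexes are precisely the master complexes (with the identification $V = nU^{n-1}$) of the completely singular diagrams $S_v$, i.e.\ $\cfk_n(S_v)$. The theorem from \cite{Me2} then identifies the vertex homologies with $H_n(S_v)$.

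The hard step is matching the edge maps. It is not automatic that the holomorphic-disc maps appearing on the edges of the HF cube agree with the algebraic zip and unzip morphisms of Khovanov--Rozansky under this vertex isomorphism. One approach is to characterize the KR edge maps uniquely up to scalar by their local behavior on webs --- their grading and their naturality under planar composition --- and then verify that the HF edge maps satisfy the same characterization. An alternative is to extend the explicit model used in \cite{Me2} from completely singular diagrams to diagrams with a single crossing and read off the edge map from the resulting disc count; this avoids an a priori uniqueness statement but requires more computation. This matching is where I would expect essentially all of the technical difficulty of the proof to lie.

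Once the two cubes are identified, the spectral sequence is induced by the filtration by cube degree on the HF cube: the $E_1$-page is the KR complex, the $E_2$-page is $H_n(L)$, and the sequence converges to the homology of the total complex, which is $\hfk_n(L)$. The grading assertion is computed by restricting $\gr_n = -nM + 2(n-1)A$ to each vertex, where under the identification $\hfk_n(S_v) \cong H_n(S_v)$ it becomes $\mathbf{gr}_n + \tfrac{n}{2}\mathbf{gr}_v$; tracking how the cube filtration interacts with $\gr_n$ then shows that each page differential drops $\mathbf{gr}_n + \tfrac{n}{2}\mathbf{gr}_v$ by $n$. The reduced case $\overline{H}_n(L) \to \widehat{\hfk}_n(L)$ is handled identically using the reduced master complex, obtained by removing the puncture and setting $U = 0$.
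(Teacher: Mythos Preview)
The statement you are attempting to prove is a \emph{conjecture} in the paper, not a theorem; the paper gives no proof. What the paper does is exactly what you outline: it observes that the conjecture would follow from an oriented skein exact triangle for $C_{F(n)}$ together with the identification of the edge maps on completely singular vertices with the $\mathfrak{sl}_n$ edge maps (this is the paper's Conjecture \ref{conj3.9}). So your strategy and the paper's intended route coincide, and you have correctly located the gap: establishing the exact triangle and matching the edge maps.

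What you should add is the specific obstruction the paper flags. The known exact triangle for $C_F$ (Theorem \ref{exacttriangle}) is proved by a local Heegaard diagram argument, but that argument does not carry over to $C_{F(n)}$ because the $\mathbf{z}$ basepoints play genuinely different roles in the three diagrams $D_+$, $D_x$, $D_s$: after the substitution $V_i = (U_{a(i)}^n - U_{b(i)}^n)/(U_{a(i)} - U_{b(i)})$ and $W_{ij} = p_2(i,j,n)$, the ground-ring identifications at a crossing, at a singular point, and at a smoothing are not compatible in the way the standard mapping-cone argument requires. Your two suggested approaches to the edge-map matching (a uniqueness characterization, or an explicit local model extending \cite{Me2}) are reasonable, but neither sidesteps this basepoint incompatibility; until that is resolved there is no cube on the Floer side whose vertices are $\cfk_n(S_v)$, and hence no $E_1$-page to compare with the KR complex. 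In short, your proposal is a faithful sketch of the conditional argument the paper already records, and the missing ingredient is precisely Conjecture \ref{conj3.9}.
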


Since $\widehat{\hfk}_{2}(K)$ is isomorphic to $\delta$-graded $\widehat{\hfk}(K)$, this gives a generalization of Rasmussen's conjectured spectral sequence from Khovanov homology to $\delta$-graded knot Floer homology. One interesting aspect of these theories is that there is no difficulty in extending to links with multiple components. Historically, this has been a fairly significant obstacle in relating Khovanov homology to knot Floer homology and was the inspiration behind the definitions for pointed links given in \cite{BLS}. These spectral sequences, together with Rasmussen's spectral sequences and the $\hfk_{n}$ analogs, are illustrated diagrammatically in Figure \ref{specpic}. In Section \ref{section3} we show that this conjecture would follow from our theory satisfying an oriented skein exact triangle condition (Conjecture \ref{conj3.9}).

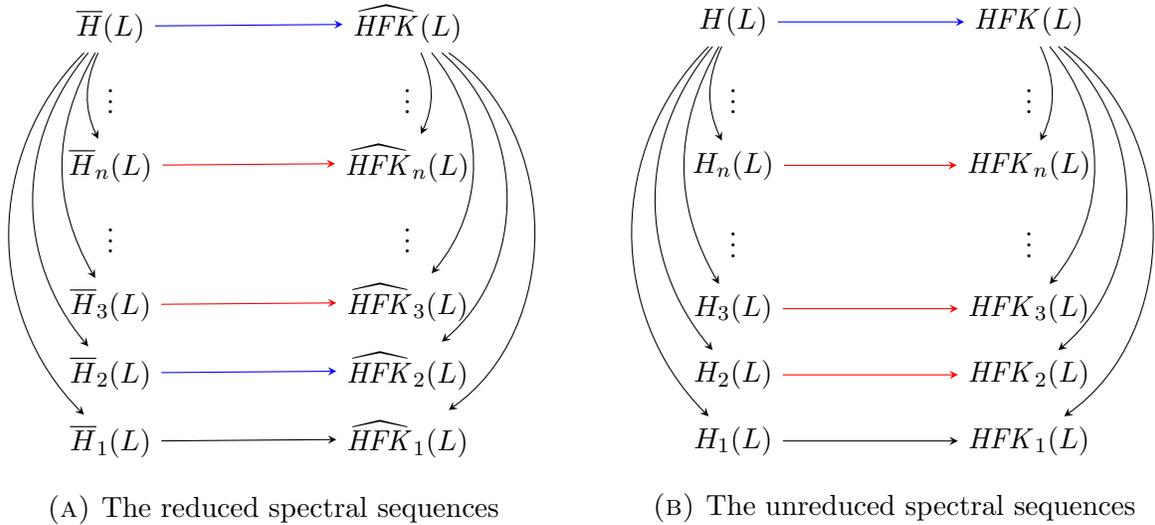
\begin{figure} \small
\centering
\begin{subfigure}{.5\textwidth}
  \centering
\begin{tikzpicture}
  \matrix (m) [matrix of math nodes,row sep=.4em,column sep=6em,minimum width=2em] {
     \overline{H}(L) &  \widehat{\hfk}(L)  \\
    \vdots & \vdots \\
    \overline{H}_{n}(L) & \widehat{\hfk}_{n}(L)  \\
    \vdots & \vdots \\
    \overline{H}_{3}(L) & \widehat{\hfk}_{3}(L)\\
    \overline{H}_{2}(L) & \widehat{\hfk}_{2}(L)\\
    \overline{H}_{1}(L) & \widehat{\hfk}_{1}(L)\\};
  \path[-stealth]
	(m-1-1) edge [blue] node {} (m-1-2)
	(m-1-1) edge [bend right = 25] node{} (m-3-1)
	(m-1-1) edge [bend right = 30] node{} (m-5-1)
	(m-1-1) edge [bend right = 40] node{} (m-6-1)
	(m-1-1) edge [bend right = 45] node{} (m-7-1)
	(m-1-2) edge [bend left = 25] node{} (m-3-2)
	(m-1-2) edge [bend left = 40] node{} (m-5-2)
	(m-1-2) edge [bend left = 50] node{} (m-6-2)
	(m-1-2) edge [bend left = 55] node{} (m-7-2)
	(m-3-1) edge [red] node {} (m-3-2)
	(m-5-1) edge [red] node {} (m-5-2)
	(m-6-1) edge [blue] node {} (m-6-2)
	(m-7-1) edge node {} (m-7-2);
\end{tikzpicture}
  \caption{The reduced spectral sequences}
\end{subfigure}%
\begin{subfigure}{.5\textwidth}
  \centering
\begin{tikzpicture}
  \matrix (m) [matrix of math nodes,row sep=.62em,column sep=6em,minimum width=2em] {
     H(L) &  \hfk(L)  \\
    \vdots & \vdots \\
    H_{n}(L) & \hfk_{n}(L)  \\
    \vdots & \vdots \\
    H_{3}(L) & \hfk_{3}(L)\\
    H_{2}(L) & \hfk_{2}(L)\\
    H_{1}(L) & \hfk_{1}(L)\\};
  \path[-stealth]
	(m-1-1) edge [blue] node {} (m-1-2)
	(m-1-1) edge [bend right = 25] node{} (m-3-1)
	(m-1-1) edge [bend right = 30] node{} (m-5-1)
	(m-1-1) edge [bend right = 40] node{} (m-6-1)
	(m-1-1) edge [bend right = 45] node{} (m-7-1)
	(m-1-2) edge [bend left = 25] node{} (m-3-2)
	(m-1-2) edge [bend left = 40] node{} (m-5-2)
	(m-1-2) edge [bend left = 50] node{} (m-6-2)
	(m-1-2) edge [bend left = 55] node{} (m-7-2)
	(m-3-1) edge [red] node {} (m-3-2)
	(m-5-1) edge [red] node {} (m-5-2)
	(m-6-1) edge [red] node {} (m-6-2)
	(m-7-1) edge node {} (m-7-2);
\end{tikzpicture}
  \caption{The unreduced spectral sequences}
\end{subfigure}
\caption{A diagram which includes both the proved and conjectured spectral sequences. The black arrows correspond to proved spectral sequences, the blue arrows to previously conjectured spectral sequences, and the red arrows to the open conjectures which are new to this paper.} \label{specpic}
\end{figure}

It is not hard to show that for $n=1$ these spectral sequences exist - in fact 
\[ \overline{H}_{1}(L) = \widehat{\hfk}_{1}(L) = \Q \]
\[ H_{1}(L) = \hfk_{1}(L) = \Q \]

\noindent
The first interesting example is $n=2$. This spectral sequence has already been explored on the reduced side, but the unreduced version is a new conjecture. Since much more is known about Khovanov homology than $\sln$ homology for $n \ge 3$, this gives a case where we can prove the conjecture for several classes of knots.

\begin{thm}

There is a spectral sequence from $H_{2}(K)$ to $\hfk_{2}(K)$ if $K$ is a 2-bridge knot, the $(3,m)$ torus knot, or any knot with crossing number $\le 13$.

\end{thm}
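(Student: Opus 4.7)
The plan is to bootstrap from the reduced form of the conjecture at $n=2$, which is exactly Rasmussen's spectral sequence: under the identifications $\widehat{\hfk}_2(K) \cong \delta$-graded $\widehat{\hfk}(K)$ and $\overline{H}_2(K) \cong \overline{Kh}(K)$, the sequence $\overline{H}_2(K) \Rightarrow \widehat{\hfk}_2(K)$ is known for each of the three classes. For 2-bridge knots, both sides are thin and of the same total rank (the determinant of $K$), so all higher differentials automatically vanish and the spectral sequence exists trivially. For $(3,m)$ torus knots it follows from the author's earlier work, and for knots of crossing number at most $13$ it is a direct computational check.

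To promote the reduced spectral sequence to the unreduced one, I would use tensor product decompositions on both sides. On the Khovanov side, Shumakovitch's theorem gives $H_2(K) \cong \overline{H}_2(K) \otimes V$ over $\mathbb{Q}$, where $V \cong \mathbb{Q}\{1\} \oplus \mathbb{Q}\{-1\}$. The Floer-side analog to establish is
\[ \hfk_2(K) \cong \widehat{\hfk}_2(K) \otimes \Q[U]/U^2 \]
as graded $\Q[U]/U^2$-modules. At the chain level this corresponds to a splitting of $\cfk_2(K)$ as two copies of $\widehat{\cfk}_2(K)$ related by the $U$-action, or equivalently to freeness of $\hfk_2(K)$ over $\Q[U]/U^2$. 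I would verify this decomposition for the knots in the statement using the same inputs as above: thinness and rank matching control the 2-bridge case, the author's prior computations handle the $(3,m)$-torus case, and direct computation suffices for knots of crossing number at most $13$.

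Given both decompositions, tensoring the reduced spectral sequence by $V$ (equivalently, by $\Q[U]/U^2$ as a graded $\mathbb{Q}$-vector space of the same graded dimension) produces the desired unreduced spectral sequence $H_2(K) \Rightarrow \hfk_2(K)$, with differentials and grading shifts inherited from the reduced case.

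The main obstacle will be establishing the Floer-side tensor decomposition. Shumakovitch's argument on the Khovanov side uses a specific chain-level deformation (the Bar-Natan endomorphism), and the Floer analog may either be approached by constructing a parallel structure on $\cfk_2(K)$ that exploits the $V=2U$ specialization, or by verifying the decomposition case-by-case using the explicit knot Floer computations available for these three classes. Once this structural statement is secured, the rest of the argument is formal bookkeeping with gradings.
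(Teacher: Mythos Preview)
Your bootstrapping strategy rests on two tensor decompositions, and both fail over $\Q$. Shumakovitch's isomorphism $Kh(K)\cong \overline{Kh}(K)\otimes V$ holds with $\Z/2$ coefficients, not with rational coefficients: already for the trefoil one has $\dim_{\Q}\overline{H}_2(T_{2,3})=3$ while $\dim_{\Q}H_2(T_{2,3})=4$, not $6$. The Floer-side statement $\hfk_2(K)\cong \widehat{\hfk}_2(K)\otimes \Q[U]/U^2$ is equally false; the paper's own computation of $\hfk_2(T_{2,3})$ gives $[2]\oplus[1]\oplus[1]$, which has dimension $4$ and is not free over $\Q[U]/U^2$. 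More generally, for a 2-bridge knot with $\dim\widehat{\hfk}_2(K)=r$ one finds $\dim H_2(K)=\dim\hfk_2(K)=r+1$, not $2r$, so no amount of case-checking will rescue the decomposition. The relation between reduced and unreduced on either side is governed by a genuine long exact sequence with nontrivial connecting map, not a splitting, so tensoring the reduced spectral sequence cannot produce the unreduced one.

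The paper instead treats each class by direct comparison. For 2-bridge knots it uses a formula of Alishahi and the author for $H_2(K)$ in terms of the signature and $r=\dim\overline{H}_2(K)$, and computes $\hfk_2(K)$ explicitly by analyzing the $\tau$-spectral sequence on the $\delta$-thin complex $\widehat{\hfk}(K)$ to pin down the extra differentials after tensoring with $\Q[U]/U^2$; the two answers are then seen to agree as graded $\Q[U]$-modules. For $T_{3,m}$ it compares Turner's formula for $\delta$-graded $H_2$ with the homology of the master complex (determined by the Alexander polynomial) after the substitution $V=U$, $UV=0$, and checks the Poincar\'e polynomials admit a spectral sequence. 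For crossing number $\le 13$ it is a computer verification using Bar-Natan's package and the Ozsv\'ath--Szab\'o program for $\cfk_{U,V}/(UV)$.
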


We also have some results for $n \ge 3$, but since much less is known about $\sln$ homology, we need to rely on a conjectured formula.

\begin{prop}

Let $K$ be a 2 or 3-strand torus knot, and assume the conjectured formula for $\sln$ homology of 3-strand torus knots given in \cite{GorskyLewark} holds true. Then there is a spectral sequence from $H_{n}(K)$ to $\hfk_{n}(K)$.

\end{prop}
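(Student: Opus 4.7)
My plan is to reduce the existence of the spectral sequence to a purely combinatorial identity between Poincar\'{e} polynomials, and then to verify that identity in each of the two torus-knot families by explicit computation. Since the conjectured differentials all drop the grading $\mathbf{gr}_{n} + \tfrac{n}{2}\mathbf{gr}_{v}$ by exactly $n$, and both $H_{n}(K)$ and $\hfk_{n}(K)$ are finite dimensional, the existence of a spectral sequence from $H_{n}(K)$ to $\hfk_{n}(K)$ is equivalent to a Poincar\'{e} polynomial identity
\[ P_{H_{n}(K)}(q) - P_{\hfk_{n}(K)}(q) = (1 + q^{-n})\, R(q), \]
with $R \in \Z_{\ge 0}[q^{\pm 1}]$, since any excess rank must be accounted for by pairs of generators cancelled by a degree $-n$ map on some page.

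Next, I would compute both sides. For $K = T(2,m)$ I would use the classical closed-form formula for the unreduced $\sln$ homology on the left (which is a standard consequence of the fact that $T(2,m)$ is two-bridge together with the tensor property for disjoint union with an unknot), and on the right I would use that $\cfk_{U,V}(T(2,m))$ is a rank-$m$ staircase, so that after the identification $V = nU^{n-1}$ its homology $\hfk_{n}(T(2,m))$ can be written down explicitly as a $\Q[U]/U^{n}$-module. For $K = T(3,m)$ I would plug the Gorsky--Lewark conjectural formula into the left-hand side, and on the right I would compute $\hfk_{n}(T(3,m))$ from a known staircase decomposition of $\cfk(T(3,m))$, possibly combined with Property (e) (the spectral sequence $\hfk(K) \to \hfk_{n}(K)$) to simplify the grading bookkeeping.

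The main obstacle will be the grading translation in the $T(3,m)$ case: the Gorsky--Lewark formula, the staircase complex, and the grading $\mathbf{gr}_{n} + \tfrac{n}{2}\mathbf{gr}_{v}$ all have different natural normalizations, and confirming that the coefficients of $R(q)$ are non-negative integers requires tracking shifts carefully through each of these translations. A useful intermediate step is to first verify the Euler characteristic identity (the $q = 1$ specialization), which must hold because both sides categorify essentially the same polynomial invariant of $K$, and then to refine to the full grading-sensitive identity by matching the generators in cancelling pairs of $q$-degrees $i$ and $i-n$.
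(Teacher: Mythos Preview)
Your reduction to a Poincar\'{e}-polynomial identity is a valid criterion for the existence of an abstract graded spectral sequence, and for $T(2,m)$ this is close in spirit to the paper. The paper, however, goes one step further there: rather than comparing polynomials directly, it filters $\cfk_{n}(T_{2,2k+1})$ by blocking discs through $w$ and observes that the resulting $E_{1}$ page matches Cautis's closed formula for $H_{n}(T_{2,2k+1})$ on the nose, so the spectral sequence is exhibited concretely rather than inferred numerically. (Your proposed source for the $H_{n}(T(2,m))$ formula---``two-bridge plus disjoint union with an unknot''---is not how this is done; the paper simply quotes Cautis.)

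For $T(3,m)$ the paper proves something strictly weaker than what you outline: it establishes only an \emph{ungraded} spectral sequence. It sets $q,t \to 1$ in the Gorsky--Lewark formula to extract $\dim H_{n}^{conj}(T_{3,3k+1})$, computes $\dim \hfk_{n}(T_{3,3k+1})$ from the staircase, and checks that the former is at least the latter and that both are $\equiv n \pmod 2$. This is exactly your criterion at $q=1$. The paper explicitly declines the graded version, calling the full formula ``hard to deal with in full generality,'' so the obstacle you flag is real and is in fact where the paper stops; your more ambitious graded plan goes beyond what is actually proved.

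One genuine correction: your claim that the $q=1$ specialization ``must hold because both sides categorify essentially the same polynomial invariant'' is wrong. Agreement of the decategorified invariants pins down the graded \emph{Euler characteristics}, not the total dimensions; the inequality $\dim H_{n}(K) \ge \dim \hfk_{n}(K)$ and the parity match are the actual content of the argument and must be verified directly, which is precisely what the paper does.
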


\section{Defining the Invariants $\hfk_{n}(L)$} 

\subsection{Heegaard Diagrams} We give a brief overview of Heegaard diagrams for knots and links, and the adaptation to punctured Heegaard diagrams.

\begin{defn}

A (multipointed) Heegaard diagram $\mathcal{H}$ for a link $L$ in a 3-manifold $Y$ is a tuple $(\Sigma, \alpha, \beta, \bf{w}, \bf{z})$ where 

$\bullet$ $\Sigma$ is a genus $g$ surface.

$\bullet$ $\alpha$ (resp. $\beta$) is a set of disjoint embedded circles $\alpha_{1},..., \alpha_{g+k-1}$ (resp. $\beta_{1},...,\beta_{g+k-1}$) such that the $\{\alpha_{i}\}$ and $\{ \beta_{i} \}$ intersect transversely.

$\bullet$ The $\alpha$ and $\beta$ each span a $g$ dimensional subspace of $H_{1}(\Sigma)$.

$\bullet$ $\bf{w}$ (resp. $\bf{z})$ is a set of basepoints $w_{1},...,w_{k}$ (resp. $z_{1},...,z_{k}$) such that each component of $\Sigma  \backslash \alpha$ contains both a $\bf{w}$ basepoint and a $\bf{z}$ basepoint, and similarly for each component of $\Sigma  \backslash \beta$.

\end{defn}

The data $(\Sigma, \alpha, \beta)$ determine a 3-maniflold $Y$ as follows: start with  $\Sigma \times [0,1]$, and glue in thickened discs along the $\alpha$ curves on $\Sigma \times \{0\}$. The resulting boundary (on that side) consists of $k$ disjoint 2-spheres, which we can fill in with 3-balls. Repeat this process with the $\beta$ curves on the $\Sigma \times \{1 \}$ boundary. The result is a closed, oriented 3-manifold $Y$.

Similarly, the basepoints $\bf{w}, \bf{z}$ determine a link $L$ in $Y$. Start with the arcs $(\bf{w} \sqcup \bf{z})\times [0,1]$. Each component of $\Sigma  \backslash \alpha$ contains a basepoint $w_{i}$ and $z_{j}$, which we connect with an arc in the ball filling in the $\alpha$ boundary. Similarly, for each component of $\Sigma  \backslash \beta$, we connect the unique $w_{i}$ and $z_{j}$ with an arc in the ball filling in the $\beta$ boundary. The link $L$ is oriented from the $\bf{w}$ basepoints to the $\bf{z}$ basepoints on the $\alpha$ side and from the $\bf{z}$ to the $\bf{w}$ basepoints on the $\beta$ side.

\begin{notation}

We will write the link $(Y,L)$ simply as $L$ when it won't cause confusion.

\end{notation}

An important piece of data that we'll need to keep track of for this construction is which two $w_{i}$ are connected to each $z_{i}$ basepoint. Let $w_{a(i)}$ be the basepoint connected to $z_{i}$ on the $\alpha$ side and $w_{b(i)}$ the basepoint connected to $z_{i}$ on the $\beta$ side.

\begin{defn}

A \emph{punctured} Heegaard diagram for a knot or link is a standard Heegaard diagram with three extra pieces of data: $p, \alpha_{g+k}$, and $\beta_{g+k}$. The point $p \in \Sigma$ can be viewed as a puncture, and the extra $\alpha$ an $\beta$ circles are required to separate the $\bf{w}$ and $\bf{z}$ basepoints from $p$. The $\alpha$ and $\beta$ circles are still required to be transverse, and each must still span a $g$-dimensional subspace of $H_{1}(\Sigma)$.

\end{defn}

\begin{rem} \label{remark1} 

A punctured Heegaard diagram for  can be viewed as a standard Heegaard diagram for a the link $L\sqcup (\unknot)$ for which the extra unknotted component contains only the basepoints $w_{0}$ and $z_{0}$, and $w_{0}=z_{0}$. The point $p$ is recording the location of these two basepoints. This puncture comes up in \cite{BLS} as well when they are moving from a reduced to an unreduced theory.

\end{rem}

\subsection{Knot Floer Chain Complexes} All of our complexes will be defined over $\Q$, so we will need to keep track of signs but not torsion. Suppose $\cH$ is a (unpunctured) Heegaard diagram for a null-homologous link $L$ in $Y$. The standard knot Floer complex $\cfk^{-}(\cH)$ is defined over $\Q[U_{1},...,U_{k}]$ with differential

\[  \partial^{-}(x) = \sum_{y \in \mathbb{T_{\alpha}} \cap \mathbb{T_{\beta}}} \sum_{\substack{\phi \in \pi_{2}(x,y) \\ \mu(\phi)=1 \\ n_{\bf{z}}(\phi)=0}}    \# \widehat{\mathcal{M}}(\phi) U_{1}^{n_{w_{1}}(\phi)}\cdot \cdot \cdot U_{k}^{n_{w_{k}}(\phi)} y \]

In order to define the differential with signs, we need to choose a trivialization of the determinant line bundle $det(\phi)$ for each class $\phi$ subject to certain compatibility conditions. Such a choice determines an orientation on each moduli space $\widehat{\mathcal{M}}(\phi)$ allowing us to count the discs with signs, and the compatibility conditions guarantee that $(\partial^{-})^{2}=0$. All of these choices together make up a \emph{system of orientations}. It was shown in \cite{Akram} that one can always find a system of orientations in which the $\alpha$-degenerations come with positive sign and the $\beta$-degenerations come with negative sign - we will always work with such a system of orientations. Any two systems of orientations with this property are strongly equivalent, so they have the same chain homotopy type \cite{SucharitSign}.

It was shown by Oszv\'{a}th and Szab\'{o} that for any link $L$, the chain homotopy type of $\cfk^{-}(\cH)$ does not depend on the choice of Heegaard diagram for $L$, nor on the system of orientations. The homology of this complex is denoted $\hfk^{-}(L)$ and by a minor abuse of notation, the chain complex is denoted $\cfk^{-}(L)$.

The complex comes equipped with two gradings, Maslov and Alexander. In this paper we will only define them as relative gradings, though absolute versions can be pinned down with some extra work. If $\phi \in \pi_{2}(x,y)$, then
\[ M(x) - M(y) = \mu(\phi) - 2n_{\bf{w}}(\phi) \hspace{3mm} \text{ and }\hspace{3mm}A(x) - A(y) = n_{\bf{z}}(\phi) - n_{\bf{w}}(\phi) \]

\noindent
Multiplication by any of the $U_{i}$ lowers Maslov grading by 2 and lowers Alexander grading by 1. It follows from these definitions that the differential $\partial^{-}$ drops Maslov grading by 1 and preserves Alexander grading.

There is also a more general complex called the \emph{master complex}, which we will denote $\cfk_{U,V}(\cH)$. The ground ring is $\Q[U_{1},...U_{k},V_{1},...,V_{k}]$, and the differential is given by 

\[  \partial_{U,V}(x) = \sum_{y \in \mathbb{T_{\alpha}} \cap \mathbb{T_{\beta}}} \sum_{\substack{\phi \in \pi_{2}(x,y) \\ \mu(\phi)=1}}    \# \widehat{\mathcal{M}}(\phi) U_{1}^{n_{w_{1}}(\phi)}\cdot \cdot \cdot U_{k}^{n_{w_{k}}(\phi)}V_{1}^{n_{z_{1}}(\phi)} \cdot \cdot \cdot V_{k}^{n_{z_{k}}(\phi)} y \]

\noindent
Multiplication by any of the $V_{i}$ preserves Maslov grading but raises Alexander grading by 1. Unfortunately $\partial_{U,V}^{2} \ne 0$, but it can be computed by counting the Maslov index 2 degenerations.

\begin{lem} The map $\partial_{U,V}: \cfk_{U,V}(\cH) \to \cfk_{U,V}(\cH)$ satisfies
\begin{equation} \label{d2}
\partial_{U,V}^{2} = \sum_{i=1}^{k} (U_{a(i)}-U_{b(i)})V_{i}     
\end{equation}
\end{lem}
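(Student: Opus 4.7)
The plan is to compute $\partial_{U,V}^{2}$ by examining the ends of the compactified moduli space of Maslov index $2$ holomorphic discs, following the standard strategy used to prove $(\partial^{-})^{2}=0$ but keeping track of the full basepoint multiplicities. Fix $x,y \in \mathbb{T}_{\alpha}\cap\mathbb{T}_{\beta}$ and $\phi \in \pi_{2}(x,y)$ with $\mu(\phi)=2$. The moduli space $\widehat{\mathcal{M}}(\phi)$ is a $1$-manifold, and its signed boundary count vanishes. The ends decompose into two types: (i) broken flow lines $\phi_{1}*\phi_{2}$ with $\mu(\phi_{i})=1$, whose contribution is precisely the coefficient of $y$ in $\partial_{U,V}^{2}(x)$; and (ii) boundary degenerations in which a holomorphic disc with boundary on all the $\alpha$-curves (or all the $\beta$-curves) bubbles off. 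Since the sum over all ends is zero, $\partial_{U,V}^{2}$ equals the signed count of boundary degenerations, weighted by the $U_{i}$ and $V_{i}$ monomials recording the basepoint multiplicities.

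Next, I would classify the boundary degenerations using the standard result of Ozsv\'{a}th--Szab\'{o}: for each connected component $R$ of $\Sigma\setminus\boldsymbol{\alpha}$, there is a unique (up to $\mathbb{R}$-action) $\alpha$-boundary degeneration whose shadow is $R$, and analogously for $\Sigma\setminus\boldsymbol{\beta}$. By the defining property of the Heegaard diagram, each component of $\Sigma\setminus\boldsymbol{\alpha}$ contains exactly one $w$-basepoint and exactly one $z$-basepoint; the convention $w_{a(i)}\leftrightarrow z_{i}$ on the $\alpha$-side identifies these paired components. Hence each $\alpha$-degeneration contributes the monomial $U_{a(i)}V_{i}$ for some $i$, and summing over $i$ gives $\sum_{i} U_{a(i)}V_{i}$ once we verify the signs. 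Similarly, each $\beta$-degeneration contributes $U_{b(i)}V_{i}$.

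For the signs, I would invoke the system of orientations chosen in the setup: by construction, $\alpha$-boundary degenerations count with sign $+1$ and $\beta$-boundary degenerations count with sign $-1$ (this is exactly the sign convention recalled in the paragraph before the lemma, and it is this property that makes the cancellation work in $\cfk^{-}$ when one sets $V_{i}=0$). Combining the contributions from (ii) yields
\begin{equation*}
\partial_{U,V}^{2} \;=\; \sum_{i=1}^{k} U_{a(i)}V_{i} \;-\; \sum_{i=1}^{k} U_{b(i)}V_{i} \;=\; \sum_{i=1}^{k}(U_{a(i)}-U_{b(i)})V_{i},
\end{equation*}
as claimed.

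The main obstacle is not the algebraic bookkeeping but rather the careful identification of the boundary degeneration contributions with the correct monomials and signs; in particular, one must ensure that the chosen trivialization of the determinant line bundle on $\widehat{\mathcal{M}}(\phi)$ induces the sign $+1$ on every $\alpha$-degeneration and $-1$ on every $\beta$-degeneration, uniformly across all of the relevant classes $\phi$. This is precisely the content of the existence (and strong equivalence) result cited from \cite{Akram} and \cite{SucharitSign}, so once that input is accepted, the rest of the argument is a direct application of the standard degeneration analysis.
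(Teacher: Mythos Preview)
Your proof is correct and follows essentially the same approach as the paper: both compute $\partial_{U,V}^{2}$ by counting boundary degenerations, using that each component of $\Sigma\setminus\alpha$ (resp.\ $\Sigma\setminus\beta$) contributes a single degeneration with sign $+1$ (resp.\ $-1$) and monomial $U_{a(i)}V_{i}$ (resp.\ $U_{b(i)}V_{i}$). The paper's proof simply elides the standard gluing analysis you spell out and jumps straight to tallying the degeneration contributions.
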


\begin{proof}

Each component of $\Sigma \backslash \alpha$ gives a degeneration with positive sign and each component of $\Sigma \backslash \beta$ gives a degeneration with negative sign. The component of $\Sigma \backslash \alpha$ containing $z_{i}$ also contains $w_{a(i)}$, so this $\alpha$-degeneration has contribution $U_{a(i)}V_{i}$. Similarly, the component of $\Sigma \backslash \beta$ containing $z_{i}$ also contains $w_{b(i)}$, so this $\beta$-degeneration has contribution $-U_{b(i)}V_{i}$, giving the formula.

\end{proof}

The same theory can be defined for punctured Heegaard diagrams, with the restriction that we don't count discs which pass through the puncture $p$:

\[  \partial_{U,V}(x) = \sum_{y \in \mathbb{T_{\alpha}} \cap \mathbb{T_{\beta}}} \sum_{\substack{\phi \in \pi_{2}(x,y) \\ \mu(\phi)=1 \\ n_{p}(\phi)=0}}    \# \widehat{\mathcal{M}}(\phi) U_{1}^{n_{w_{1}}(\phi)}\cdot \cdot \cdot U_{k}^{n_{w_{k}}(\phi)}V_{1}^{n_{z_{1}}(\phi)} \cdot \cdot \cdot V_{k}^{n_{z_{k}}(\phi)} y \]

\noindent
Since the region containing the puncture $p$ is blocked, $\partial_{U,V}^{2}$ is the same as in the unpunctured case. The relative gradings are given by 
\[ M(x) - M(y) = \mu(\phi) - 2n_{\bf{w}}(\phi) - 2n_{p}(\phi) \hspace{3mm}  \text{ and }\hspace{3mm}A(x) - A(y) = n_{\bf{z}}(\phi) - n_{\bf{w}}(\phi) \]

\noindent
Let $\cH$ and $\cH'$ be a punctured an an unpunctured Heegaard diagram, respectively, for a link $L$. We define the absolute gradings on $\cfk^{-}(\cH)$ so that $\hfk^{-}(\cH) \cong \hfk^{-} \otimes V$, where $V=\Q\{-1,-1/2\} \oplus \Q\{0,-1/2\}$. These gradings determine an absolute bigrading on $\cfk_{U,V}(\cH)$.

\subsection{The Definition of $\hfk_{n}(L)$} Suppose $\cH$ is a punctured Heegaard diagram for $L$. In order to make $\cfk_{U,V}(\cH)$ into an (untwisted) chain complex, some identifications need to be made in the ground ring. The standard solution is to set $V_{i}=V_{j}$ if $z_{i}$ and $z_{j}$ lie on the same component of $L$. Instead, we are going to draw on ideas from the $\sln$ homology of Khovanov and Rozansky \cite{KR}. 

\begin{defn}

Define the chain complex $\cfk_{n}(\cH)$ to be the quotient 
\begin{equation}\label{substitute}
\cfk_{n}(\cH) = \cfk_{U,V}(\cH)/\{ V_{i} = (U_{a(i)}^{n} - U_{b(i)}^{n})/(U_{a(i)}-U_{b(i)}) \} 
\end{equation}

\noindent
The induced differential on this complex will be denoted $\partial_{n}$.

\end{defn}

\begin{lem}

The map $\partial_{n}: \cfk_{n}(\cH) \to \cfk_{n}(\cH)$ satisfies $\partial_{n}^{2}=0$.

\end{lem}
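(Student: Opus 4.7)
The plan is to reduce this directly to the computation of $\partial_{U,V}^{2}$ on the master complex, which was handled by the previous lemma, and then show that after the substitution defining $\cfk_{n}(\cH)$ the remaining expression telescopes around each component of $L$.

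First I would record the analogue of equation \eqref{d2} in the punctured setting. The formula $\partial_{U,V}^{2} = \sum_{i=1}^{k}(U_{a(i)}-U_{b(i)})V_{i}$ continues to hold on $\cfk_{U,V}(\cH)$ for a punctured diagram, because the $n_{p}(\phi)=0$ condition only eliminates the two boundary degenerations that sweep across the region containing $p$, and those correspond to the ``extra unknot'' component mentioned in Remark~\ref{remark1}, which carries no $z$-basepoint in the master complex.

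Next I would apply the defining substitution $V_{i}=(U_{a(i)}^{n}-U_{b(i)}^{n})/(U_{a(i)}-U_{b(i)})$. Each summand cancels the factor $(U_{a(i)}-U_{b(i)})$, leaving
\[ \partial_{n}^{2} \;=\; \sum_{i=1}^{k}\bigl(U_{a(i)}^{n}-U_{b(i)}^{n}\bigr). \]
Finally I would argue that this sum is zero by a telescoping argument organized by link components. Traversing a component of $L$, the basepoints encountered alternate between $w$'s and $z$'s, and the arcs alternate between $\alpha$-arcs (each contributing a pair $(w_{a(i)},z_{i})$) and $\beta$-arcs (each contributing $(z_{i},w_{b(i)})$). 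Writing the cyclic sequence of $w$-basepoints on one component as $w_{i_{1}},\dots,w_{i_{m}}$ interlaced with $z$-basepoints $z_{j_{1}},\dots,z_{j_{m}}$, one reads off $a(j_{s})=i_{s}$ and $b(j_{s})=i_{s+1}$ (indices mod $m$), so the contribution of that component becomes $\sum_{s}(U_{i_{s}}^{n}-U_{i_{s+1}}^{n})=0$.

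The computation itself is short; the main obstacle is the bookkeeping around the puncture, i.e.\ verifying that replacing $(\cH,p)$ by its interpretation as a Heegaard diagram for $L\sqcup\unknot$ does not introduce spurious degenerations. Once one accepts that the region containing $p$ carries no $U$- or $V$-variable and that discs through $p$ are excluded from $\partial_{U,V}$, the formula for $\partial_{U,V}^{2}$ in the punctured case is identical to the unpunctured one restricted to the ``real'' basepoints of $L$, and the telescoping then runs exactly as above.
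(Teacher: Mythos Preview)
Your proposal is correct and follows essentially the same approach as the paper: substitute the defining relation for $V_{i}$ into the formula $\partial_{U,V}^{2}=\sum_{i}(U_{a(i)}-U_{b(i)})V_{i}$ to obtain $\sum_{i}(U_{a(i)}^{n}-U_{b(i)}^{n})$, and then observe that this sum vanishes. The only cosmetic difference is that the paper phrases the vanishing as ``$a$ and $b$ are both permutations'' (so $\sum_{i}U_{a(i)}^{n}=\sum_{j}U_{j}^{n}=\sum_{i}U_{b(i)}^{n}$), whereas you trace the cancellation component by component as a telescoping sum; these are the same observation.
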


\begin{proof}

Plugging $V_{i} = (U_{a(i)}^{n} - U_{b(i)}^{n})/(U_{a(i)}-U_{b(i)})$ into (\ref{d2}) gives 

\[ \partial_{n}^{2} = \sum_{i=1}^{k} U_{a(i)}^{n} - U_{b(i)}^{n}    \]

\noindent
since $a$ and $b$ are both permutations, this sums to zero.

\end{proof}

Let $\hfk_{n}(\cH) = H_{*}(\cfk_{n}(\cH), \partial_{n})$. The differential $\partial_{n}$ is not homogeneous with respect to the Maslov and Alexander gradings, but if we define the grading $\gr_{n} = -nM + 2(n-1)A$, then $\partial_{n}$ is homogeneous of degree $n$ with respect to $\gr_{n}$ and each $U_{i}$ has grading $2$.

Suppose $L$ is an $l$-component link. We will choose an ordering on the $w_{i}$ such that each of $w_{1},...,w_{l}$ lie on different components of $L$.

\begin{thm} \label{inv}

If $\cH_{1}$ and $\cH_{2}$ are two punctured Heegaard diagrams for a link $L$, then $\cfk_{n}(\cH_{1})$ and $ \cfk_{n}(\cH_{2})$ are chain homotopy equivalent as graded $\Q[U_{1},...,U_{l}]$-modules.

\end{thm}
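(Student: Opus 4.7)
The plan is to deduce invariance from Zemke's theorem for the master complex $\cfk_{U,V}$ by descending through the substitution \eqref{substitute}. The strategy has three steps.

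First, I would invoke the fact that any two punctured Heegaard diagrams for $L$ are connected by a sequence of elementary Heegaard moves: isotopies of $\alpha$ and $\beta$ curves, handleslides, index $1/2$ (de)stabilizations, and index $0/3$ (de)stabilizations that add or remove basepoint pairs. By Remark \ref{remark1}, the puncture data can be reinterpreted as an auxiliary unknot component, so moves altering the puncture or the extra curves $\alpha_{g+k}, \beta_{g+k}$ are themselves instances of elementary moves for the augmented diagram.

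Second, by Zemke's work each elementary move induces a $\Q[U_i, V_i]$-equivariant curved chain homotopy equivalence of master complexes that respects the curvature formula \eqref{d2}. The key observation is that each move preserves the connectivity data $i \mapsto (a(i), b(i))$ up to an intrinsic relabeling of indices; consequently the ideal
\begin{equation*}
I_n = \bigl\langle V_i - (U_{a(i)}^n - U_{b(i)}^n)/(U_{a(i)} - U_{b(i)}) \; : \; 1 \le i \le k \bigr\rangle
\end{equation*}
is carried to itself, and the equivalences descend to chain homotopy equivalences between $\cfk_n(\cH_1)$ and $\cfk_n(\cH_2)$ as $\Q[U_1,\ldots,U_k]$-modules.

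Third, I would upgrade this to a graded $\Q[U_1,\ldots,U_l]$-module equivalence. For the grading, each side of the substitution $V_i = \sum_{s=0}^{n-1} U_{a(i)}^{s} U_{b(i)}^{n-1-s}$ has $\gr_n$-degree $2(n-1)$, so the substitution is $\gr_n$-homogeneous and $\partial_n$ is homogeneous of degree $n$, which implies the invariance maps preserve $\gr_n$. For the module structure, the standard triangle-counting argument shows that any two $\bf{w}$-basepoints on the same component of $L$ induce chain-homotopic actions on $\cfk_n$, so the $\Q[U_1,\ldots,U_l]$-action descends unambiguously from the chosen representatives $w_1,\ldots,w_l$.

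The main obstacle will be the second step: verifying carefully that each Heegaard move transforms the assignment $i \mapsto (a(i), b(i))$ in a way that permutes the generators of $I_n$. For moves not involving basepoints this is automatic, but for basepoint-moving isotopies and for $(0,3)$-stabilizations introducing new basepoint pairs one needs explicit bookkeeping to ensure the substitution is genuinely preserved, rather than merely compatible after an additional change of coordinates in the ground ring.
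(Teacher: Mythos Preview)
Your overall strategy matches the paper's, and step~2 works as stated for isotopies, handleslides, changes of almost complex structure, and $(1,2)$-stabilizations, where Zemke's curved homotopy equivalences are available and the connectivity data is genuinely unchanged. The gap is at $(0,3)$-stabilization, and it is more than bookkeeping.

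Two of your claims fail there. First, there is no $\Q[U_i,V_i]$-equivariant curved homotopy equivalence of master complexes to descend: $\cfk_{U,V}(\cH_1)$ and $\cfk_{U,V}(\cH_2)$ live over different polynomial rings (the stabilized diagram carries extra variables $U_{k+1},V_{k+1}$), and what Lemma~\ref{30stab} actually provides is an explicit mapping-cone description of $\cfk_{U,V}(\cH_2)$ in terms of $\cfk_{U,V}(\cH_1)$, not a homotopy equivalence. Second, the connectivity is \emph{not} preserved up to relabeling: in $\cH_1$ the basepoint $z_i$ is joined to $w_{a_1(i)}$ on the $\alpha$ side, but in $\cH_2$ it is joined to the new basepoint $w_{k+1}$, so the substitution prescribed for $V_i$ genuinely changes between the two diagrams and the ideal $I_n$ is not carried to itself by any ring map. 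The paper resolves this by applying the $\cH_2$-substitutions to the mapping-cone description, obtaining $\cfk_n(\cH_2)$ as a cone on $U_{a_1(i)}-U_{k+1}$ over an auxiliary complex $\cfk'_n(\cH_1)[U_{k+1}]$; contracting that cone sets $U_{k+1}=U_{a_1(i)}$, and the key observation is that this specialization converts the $\cH_2$-substitution for $V_i$ back into the $\cH_1$-substitution, recovering $\cfk_n(\cH_1)$. Your proposal needs this argument, or an equivalent one, to close the gap.
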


\noindent
This theorem states that our homology theory is a link invariant. We will give a proof in the following section. As part of the proof, we will see that if $w_{i}$ and $w_{j}$ lie on the same component of $L$, then they have chain homotopic actions on $\cfk_{n}(\cH)$ (see Corollary \ref{cor1}). 
\begin{defn}

Define $\hfk_{n}(L) = H_{*}(\cfk_{n}(\cH),\partial_{n})$ where $\cH$ is any punctured Heegaard diagram for $L$.

\end{defn}

By a minor abuse of notation, we will also write $\cfk_{n}(L)$ for the chain complex coming from any punctured Heegaard diagram for $L$.

\begin{rem}

Each generator $x \in \mathbb{T}_{\alpha} \cap \mathbb{T}_{\beta}$ can be assigned a $\spin^{c}$ structure $\mathfrak{s} \in \spin^{c}(Y)$ \cite{3manifold}. Since $L$ is null-homologous in $Y$, the differential $\partial_{n}$ preserves the underlying $\spin^{c}$ structure, so the resulting homology splits

\[   \hfk_{n}(Y,L) = \bigoplus_{\mathfrak{s} \in \spin^{c}(Y)} \hfk_{n}(Y,L,\mathfrak{s})  \]

\noindent
However, since we are focusing on the relationships of $\hfk_{n}$ with $\sln$ homology, our examples will only be in $S^{3}$ which has a unique $\spin^{c}$ structure, so we will suppress $\spin^{c}$ structures from the notation.

\end{rem}

Now that we have a well-defined homology theory, let's compute it for a few knots. If $R$ is a bigraded polynomial ring with gradings $M$ and $A$, let $R\{i,j\}$ be the free $R$-module on one generator with the gradings shifted such that $1$ lies in Maslov grading $i$ and Alexander grading $j$. Similarly, if $R$ is singly graded with grading $\gr_{n}$, let $R\{k\}$ be the ring with gradings shifted such that $\gr_{n}(1)=k$.

\begin{ex}[The Unknot] Consider the punctured Heegaard diagram for the unknot in Figure \ref{UnknotHD}. There are two generators $x,y \in \mathbb{T}_{\alpha} \cap \mathbb{T}_{\beta}$, and the associated chain complex $\cfk_{U,V}(\cH)$ is given by 
\[ \Q[U,V]\{-1,-1/2 \} \xrightarrow{\hspace{3mm} UV \hspace{3mm}} \Q[U,V]\{0,-1/2\}  \]

\noindent
To get the complex $\cfk_{n}(\unknot)$, we must substitute for $V$ as in Equation (\ref{substitute}). Since there is only one $w$ basepoint, $z$ is connected to it on both the $\alpha$ and $\beta$ sides, so the formula becomes 
\[ V = \frac{d}{dU}U^{n} = n U^{n-1} \]

\noindent
Thus, the complex $\cfk_{n}(\unknot)$ is given by 
\[ \Q[U]\{0\} \xrightarrow{\hspace{3mm} nU^{n} \hspace{3mm}} \Q[U]\{-n+1\}  \]

\noindent
so the homology is $\hfk_{n}(\unknot) = \Q[U]/U^{n}=0$ with $\gr_{n}(1)=-n+1$. Note that the graded Euler characteristic of this homology is $ \frac{q^{n}-q^{-n}}{q-q^{-1}}$, which is the $\sln$ polynomial of the unknot.

\end{ex}

\begin{figure}[ht]
\centering
\def\svgwidth{6.5cm} \large
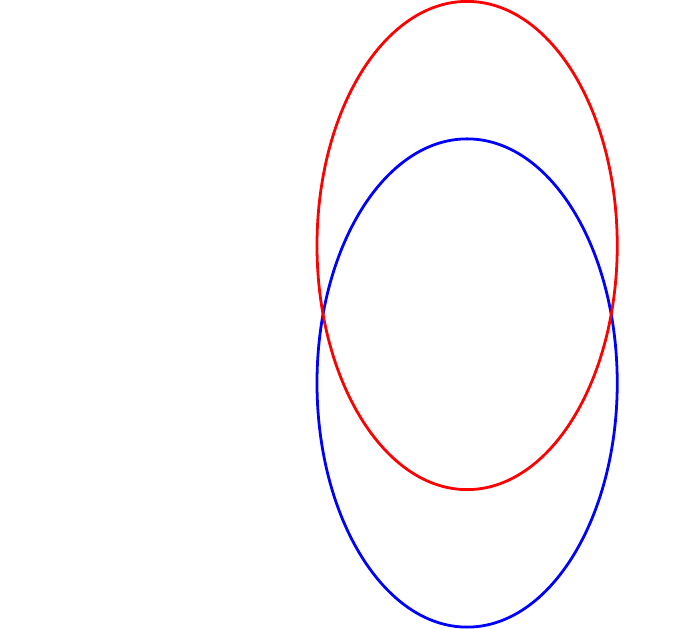
\caption{A Punctured Heegaard Diagram on $S^{2}$ for the Unknot}\label{UnknotHD}
\end{figure}

\begin{notation}

Let $[n]$ denote the homology $\hfk_{n}(\unknot)$.

\end{notation}

\begin{ex}[The right handed trefoil] Consider the punctured Heegaard diagram for the right-handed trefoil $T_{2,3}$ in Figure \ref{TrefoilHD}. The associated chain complex $\cfk_{U,V}(T_{2,3})$ is shown in Figure \ref{cfkuvtref}, and the complex $\cfk_{n}(T_{2,3})$ is shown in Figure \ref{cfkntref}. The homology of this complex is 
\begin{align*} 
\hfk_{n}(T_{2,3}) &= \Q[U]\{-n+3\}/(U^{n}=0) \oplus \Q\{n-1\} \oplus \Q\{-2n-1\} \\
&=[n]\{2\} \oplus [1]\{n-1\} \oplus [1]\{2n-1\} 
\end{align*}

\begin{figure}[ht]
\centering
\def\svgwidth{10cm} \small
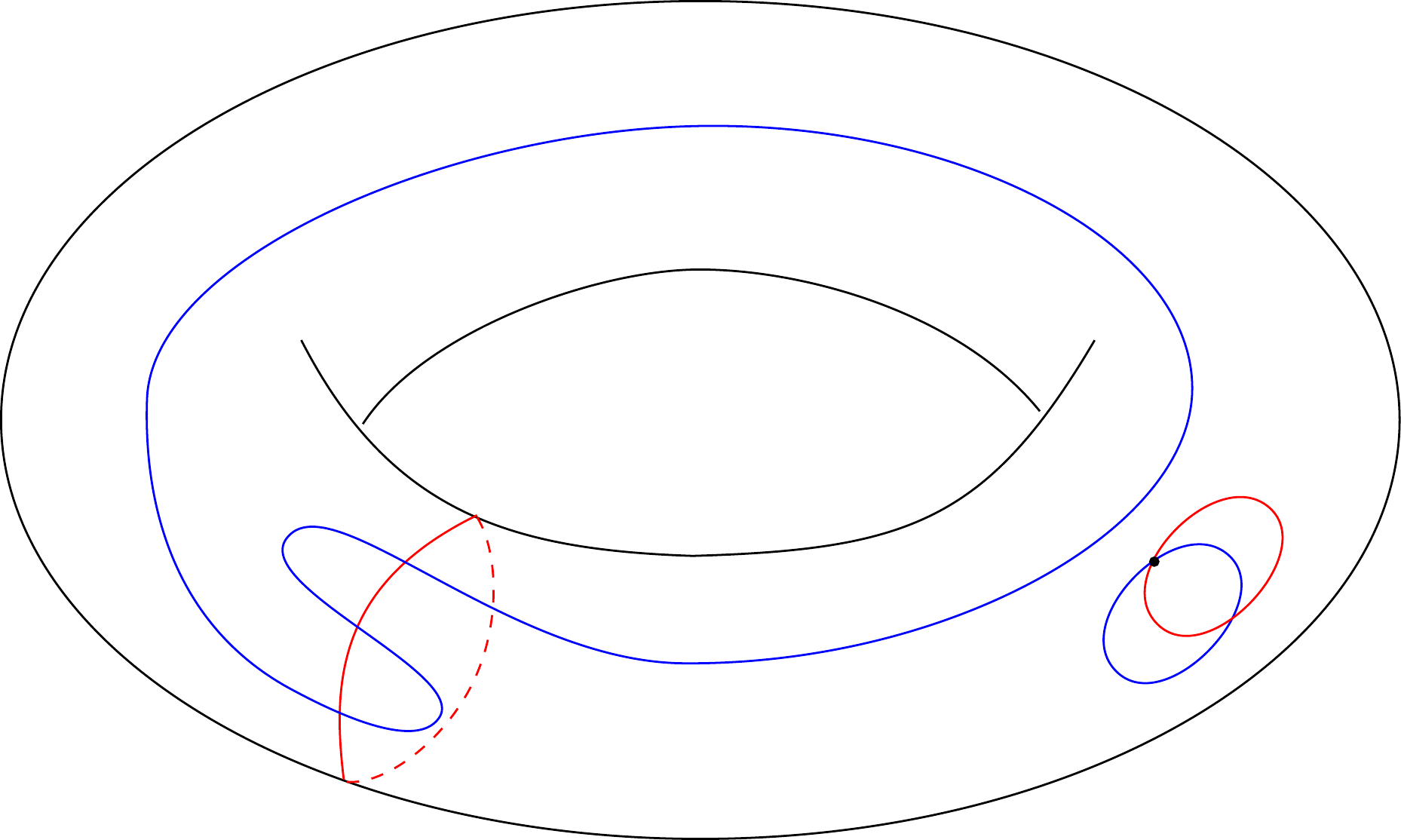
\caption{A punctured Heegaard diagram for the right handed trefoil.}\label{TrefoilHD}
\end{figure}

\begin{figure}[!h]
\centering
\begin{tikzpicture}
  \matrix (m) [matrix of math nodes,row sep=4.5em,column sep=5.5em,minimum width=2em] {
     \Q[U,V]\{-1,1/2\} & \Q[U,V]\{-2,-1/2\} & \Q[U,V]\{-3,-3/2\} \\
     \Q[U,V]\{0,1/2\} & \Q[U,V]\{-1,-1/2\} & \Q[U,V]\{-2,-3/2\} \\};
  \path[-stealth]
    (m-1-1) edge node [left] {$UV$} (m-2-1)
    (m-1-2) edge node [above] {$U$} (m-1-1)
    (m-1-2) edge node [above] {$V$} (m-1-3)
    (m-1-3) edge node [left] {$UV$} (m-2-3)
    (m-1-2) edge node [right] {$UV$} (m-2-2)
    (m-2-2) edge node [above] {$V$} (m-2-3)
    (m-2-2) edge node [above] {$U$} (m-2-1);
\end{tikzpicture}
\caption{The complex $\cfk_{U,V}(T_{2,3})$.} \label{cfkuvtref}
\end{figure}

\begin{figure}[!h]
\centering
\begin{tikzpicture}
  \matrix (m) [matrix of math nodes,row sep=4.5em,column sep=5.5em,minimum width=2em] {
     \Q[U]\{2n-1\} & \Q[U]\{n+1\} & \Q[U]\{3\} \\
     \Q[U]\{n-1\} & \Q[U]\{1\} & \Q[U]\{-n+3\} \\};
  \path[-stealth]
    (m-1-1) edge node [left] {$nU^{n}$} (m-2-1)
    (m-1-2) edge node [above] {$U$} (m-1-1)
    (m-1-2) edge node [above] {$nU^{n-1}$} (m-1-3)
    (m-1-3) edge node [left] {$nU^{n}$} (m-2-3)
    (m-1-2) edge node [right] {$nU^{n}$} (m-2-2)
    (m-2-2) edge node [above] {$nU^{n-1}$} (m-2-3)
    (m-2-2) edge node [above] {$U$} (m-2-1);
\end{tikzpicture}
\caption{The complex $\cfk_{n}(T_{2,3})$.} \label{cfkntref}
\end{figure}

\end{ex}

\newpage

\subsection{Invariance} In this section we will prove Theorem \ref{inv}. As usual, this will be proved via invariance under Heegaard moves.

\begin{lem}

Let $\cH_{1}$ and $\cH_{2}$ be two punctured Heegaard diagrams for a link $L$. Then $\cH_{1}$ and $\cH_{2}$ can be connected by a sequence of the following Heegaard moves: 

$\bullet$ Change of almost complex structure

$\bullet$ Isotopies which do not cross the basepoints $\bf{w},\bf{z}$ or the puncture $p$

$\bullet$ Handleslides

$\bullet$ (1,2)-stabilization (increasing the genus of $\Sigma$)

$\bullet$ (0,3)-stabilization (increasing the number of basepoints)

\end{lem}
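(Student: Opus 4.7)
The plan is to reduce to the analogous classical result for (unpunctured) multi-pointed Heegaard diagrams. Using Remark \ref{remark1}, a punctured Heegaard diagram $\mathcal{H}$ for $L$ can be interpreted as an ordinary multi-pointed Heegaard diagram $\widetilde{\mathcal{H}}$ for the link $L \sqcup \unknot$: the puncture $p$ corresponds to a coincident pair of basepoints $w_0 = z_0 = p$ placed on the extra unknot component, and the curves $\alpha_{g+k}, \beta_{g+k}$ play the role of the meridian and longitude of this unknot. Two punctured diagrams $\mathcal{H}_1, \mathcal{H}_2$ for $L$ therefore give rise to two multi-pointed diagrams $\widetilde{\mathcal{H}}_1, \widetilde{\mathcal{H}}_2$ for the same link $L \sqcup \unknot$.

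By the classical invariance theorem for multi-pointed Heegaard diagrams (Ozsv\'ath--Szab\'o, Manolescu--Ozsv\'ath--Sarkar), $\widetilde{\mathcal{H}}_1$ and $\widetilde{\mathcal{H}}_2$ can be connected through a sequence of changes of almost complex structure, basepoint-avoiding isotopies, handleslides, $(1,2)$-stabilizations, and $(0,3)$-stabilizations. Each of the first four move types translates directly into the punctured setting: an isotopy avoiding the basepoints of $\widetilde{\mathcal{H}}$ in particular avoids $p$, a change of almost complex structure or a $(1,2)$-stabilization away from $p$ carries over verbatim, and handleslides carry over once we check that the property ``$\alpha_{g+k}, \beta_{g+k}$ separate $p$ from $\mathbf{w} \cup \mathbf{z}$'' can be maintained. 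The latter can always be arranged by pre- and post-composing with additional handleslides involving $\alpha_{g+k}$ or $\beta_{g+k}$, since the extra unknot component is unknotted and these curves are its meridian/longitude.

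The main obstacle will be handling $(0,3)$-stabilizations supported on the extra unknot component of $L \sqcup \unknot$, which would in principle add more basepoints near $p$ and break the punctured structure. The idea for dealing with them is to exploit that the unknot component lies in an arbitrarily small ball around $p$ that is disjoint from $L$: any $(0,3)$-stabilization taking place in this ball can, after a finite number of handleslides and basepoint-avoiding isotopies to restore the ``separating'' configuration, be realized as a relocation of the puncture (or cancelled against a subsequent destabilization in the same ball). After making these adjustments throughout the sequence, every move is of one of the five listed types in the punctured world, and the result is a sequence of punctured Heegaard moves connecting $\mathcal{H}_1$ to $\mathcal{H}_2$.
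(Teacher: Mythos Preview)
Your approach is essentially the same as the paper's: reinterpret the punctured diagrams as ordinary multi-pointed diagrams for $L\sqcup\unknot$ via Remark~\ref{remark1}, invoke the classical connectivity result, and translate the resulting sequence of moves back. The paper's proof is a two-sentence version of exactly this, with one simplification: it does not single out and separately treat the potential issues you raise (handleslides disturbing the separating property of $\alpha_{g+k},\beta_{g+k}$, or $(0,3)$-stabilizations supported on the extra unknot component). It simply asserts that ``the same sequence of moves connects the punctured diagrams $\mathcal{H}_1$ and $\mathcal{H}_2$.''

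Your additional discussion is not wrong, but it is more than the paper deems necessary. In particular, since both $\widetilde{\mathcal{H}}_1$ and $\widetilde{\mathcal{H}}_2$ carry exactly one $(w_0,z_0)$ pair on the unknot component, one may appeal to the refined form of the classical result in which $(0,3)$-stabilizations are only needed on components whose basepoint counts differ; this sidesteps your ``main obstacle'' without the ad hoc cancellation argument you sketch. Likewise, the separating condition on $\alpha_{g+k},\beta_{g+k}$ is automatic once one remembers that in the unpunctured picture these curves bound the region containing $w_0,z_0$, and the standard moves already respect basepoint-region structure.
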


\begin{proof}

This lemma essentially follows from the same fact being true for unpunctured Heegaard diagrams. Let $\cH_{1}'$ and $\cH_{2}'$ be the unpunctured Heegaard diagrams for $L \sqcup (\unknot)$ obtained by replacing $p$ with two basepoints $w_{0}$ and $z_{0}$ The diagrams $\cH_{1}'$ and $\cH_{2}'$ can be connected by a sequence of the above moves, so the same sequence of moves connects the punctured diagrams $\cH_{1}$ and $\cH_{2}$.

\end{proof}

We will now give some preliminary results on how the complex $\cfk_{U,V}$ is known to change under Heegaard moves.

\begin{lem} [\hspace{1sp}\cite{Zemke2}] \label{firstmoves}

If $\cH_{1}$ and $\cH_{2}$ are two Heegaard diagrams which differ by a choice of almost complex structure, isotopy, handleslide, or (1,2)-stabilization, then $\cfk_{U,V}(\cH_{1})$ and $\cfk_{U,V}(\cH_{2})$ are homotopy equivalent in the category of curved complexes.

\end{lem}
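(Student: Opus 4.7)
The plan is to adapt the classical Heegaard Floer invariance proofs to the curved master complex $\cfk_{U,V}(\cH)$, keeping track of the full $U^{n_{\mathbf{w}}}V^{n_{\mathbf{z}}}$-weighting rather than setting $V$-variables to zero. The key preliminary observation is that the curvature $\omega = \sum_i (U_{a(i)} - U_{b(i)})V_i$ from equation \eqref{d2} depends only on the underlying link $L$ (via the pairing of basepoints), not on the diagram, so it is preserved by each of the four listed moves, after the natural renaming of variables in the stabilization case. With this in hand, a morphism in the category of curved complexes between two such complexes with equal curvature is simply a $\Q[U,V]$-linear map $F$ satisfying $F\partial_{U,V}^{(1)} = \partial_{U,V}^{(2)} F$, and a homotopy equivalence means a chain map with a two-sided chain homotopy inverse in this ordinary sense.

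For change of almost complex structure and isotopy, I would introduce the continuation map
\[ \Phi(x) \;=\; \sum_{y}\,\sum_{\substack{\phi \in \pi_{2}(x,y) \\ \mu(\phi)=0}} \#\widehat{\mathcal{M}}_{J_{t}}(\phi)\, U_{1}^{n_{w_{1}}(\phi)}\cdots U_{k}^{n_{w_{k}}(\phi)} V_{1}^{n_{z_{1}}(\phi)}\cdots V_{k}^{n_{z_{k}}(\phi)}\, y \]
associated to a generic path $J_{t}$ of almost complex structures and analyze the ends of the $1$-dimensional moduli spaces. The novelty compared to the uncurved case is that the $\alpha$- and $\beta$-boundary degenerations now contribute $\omega\Phi$ and $\Phi\omega$ respectively, which cancel because $\omega$ is central; the remaining ends give exactly the chain-map identity $\Phi\partial_{U,V}^{(0)} = \partial_{U,V}^{(1)}\Phi$. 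Reversing the path gives a candidate inverse, and interpolating between the two paths yields the required chain homotopy. For handleslides, I would use a triple diagram $(\Sigma,\alpha,\beta,\beta')$ and count index-zero holomorphic triangles weighted by $U^{n_{\mathbf{w}}} V^{n_{\mathbf{z}}}$, paired against the canonical top generator $\Theta \in \cfk_{U,V}(\beta,\beta')$; associativity, proved via a holomorphic rectangle count, gives both the chain-map property and the homotopy inverse. For $(1,2)$-stabilization, a direct identification presents $\cfk_{U,V}(\cH')$ as the tensor product of $\cfk_{U,V}(\cH)$ with a small two-generator curved complex coming from the new $\alpha$-$\beta$ intersection pair, and that small complex is a curved strong deformation retract of $\Q[U,V]$ over the extended ground ring.

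All of these constructions are carried out in detail in Zemke's paper \cite{Zemke2}, so the proof is largely an appeal to his results. The main obstacle at each step is controlling the boundary-degeneration contributions, which in the curved setting produce the nonzero term $\omega$ rather than canceling as they do in the classical uncurved theory; verifying that these degenerations cancel in matched pairs on the two sides of every chain-map identity, and that the auxiliary top generators and stabilization isomorphisms respect the curvature, is the technical heart of the argument.
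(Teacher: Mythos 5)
The paper does not prove this lemma --- it is cited directly to Zemke's work on the link Floer TQFT, and no proof is reproduced. So your proposal should be read as a sketch of Zemke's argument rather than as an alternative to anything in this paper. Your broad outline --- continuation maps for isotopies and change of $J$, triangle counts with a canonical $\Theta$-generator for handleslides, a stabilization isomorphism, all weighted by $U^{n_{\mathbf{w}}}V^{n_{\mathbf{z}}}$, and the preliminary observation that the curvature $\omega$ is determined by the basepoint pairing and hence is unchanged by all four moves --- is the right shape of the argument.

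Two points are off in the details. First, your description of $(1,2)$-stabilization conflates it with $(0,3)$-stabilization. A $(1,2)$-stabilization adds a genus-one piece with new curves $\alpha_{g+1},\beta_{g+1}$ meeting in a single point; it introduces no new basepoints and no new $U$- or $V$-variables, and the resulting complex is \emph{isomorphic} to the original (after neck-stretching), not merely a deformation retract onto a two-generator tensor factor over an extended ring. The ``two-generator complex over an extended ground ring'' picture is exactly the $(0,3)$-stabilization analysis, which this lemma explicitly excludes and which the paper instead handles separately via Lemma~\ref{30stab}. Second, the claim that $\alpha$- and $\beta$-boundary degenerations contribute $\omega\Phi$ and $\Phi\omega$ to the ends of the one-dimensional parametrized moduli spaces in the chain-map proof is not quite right: boundary degenerations have Maslov index $2$, so they cannot appear as ends of the one-dimensional ($\mu = 1$) parametrized moduli spaces one examines to establish $\Phi\partial^{(0)} = \partial^{(1)}\Phi$. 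They do enter when proving $\partial_{U,V}^{2}=\omega$ and when verifying that the composition of the two continuation maps is homotopic to the identity, which is where the centrality of $\omega$ is actually used. Neither slip affects the overall plan, but both would need to be corrected in a careful write-up.
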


The only remaining move is (0,3)-stabilization. Let $\cH_{1}$ be a Heegaard diagram, and let $w_{a_{1}(i)}, z_{i}$ be two basepoints in $\cH_{1}$. Define $\cH_{2}$ to be the diagram obtained by performing (0,3)-stabilization between $z_{i}$ and $w_{a_{1}(i)}$ as in Figure \ref{stabdiagrams}. If $X$ is the set of generators for $\cfk_{U,V}(\cH_{1})$, then the set of generators for $\cfk_{U,V}(\cH_{1})$ is given by $\{x,y\} \times X$.

\begin{figure}
\centering
\begin{subfigure}{.4\textwidth}
  \centering
\def\svgwidth{7cm} \small
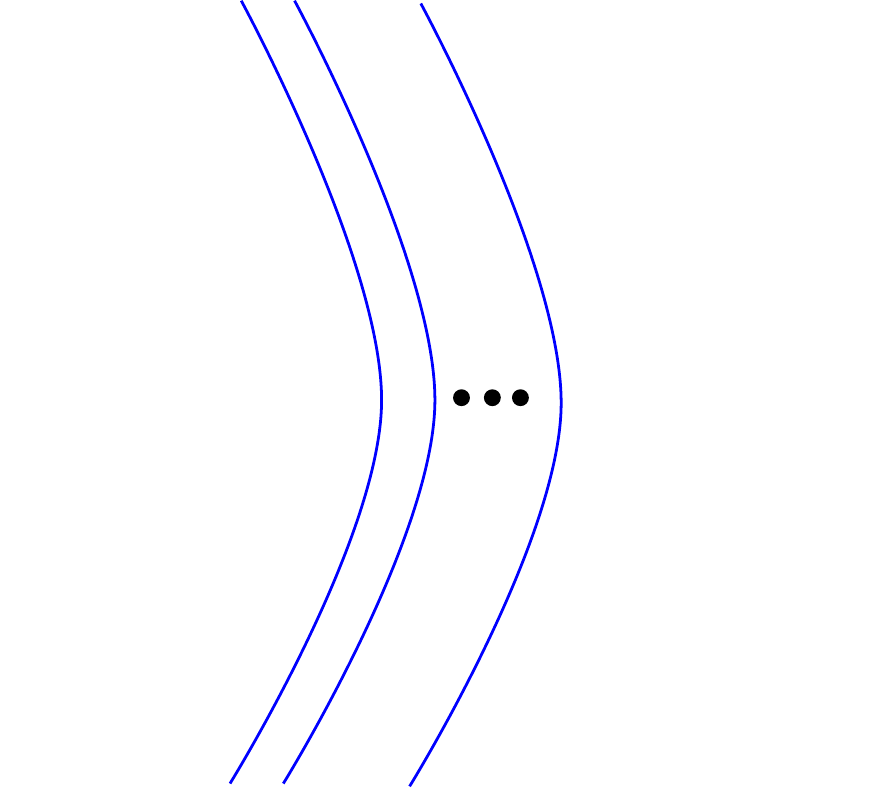
  \caption{The unstabilized diagram $\cH_{1}$}
\end{subfigure}%
\begin{subfigure}{.8\textwidth}
  \centering
\def\svgwidth{11cm} \small
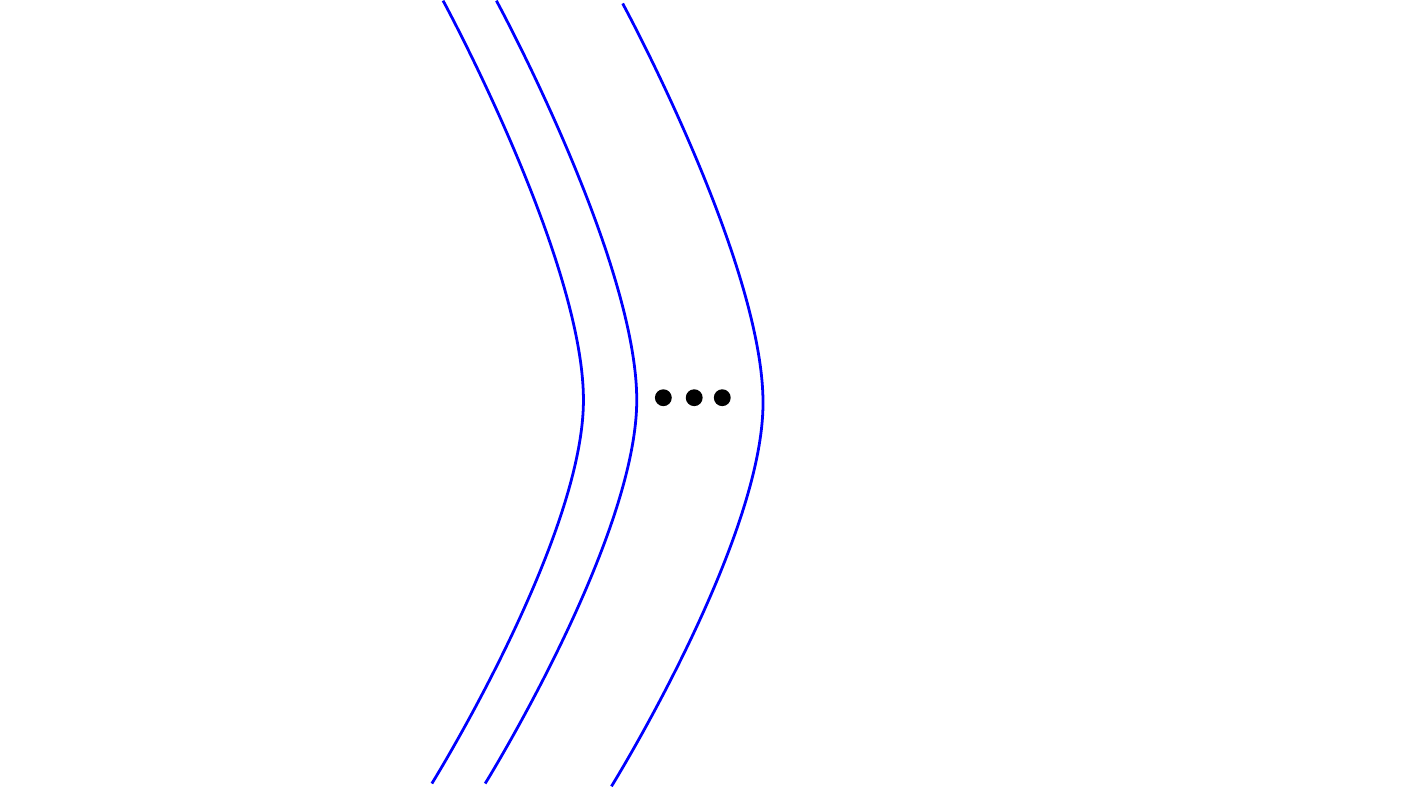
  \caption{The (0,3)-stabilized diagram $\cH_{2}$}
\end{subfigure}
\caption{A Heegaard diagram before and after (0,3)-stabilization}\label{stabdiagrams}
\end{figure}

\begin{lem}[\hspace{1sp}\cite{Akram}, \cite{Zemke1}] \label{30stab}

Let $\partial_{U,V,1}$ denote the differential on $\cfk_{U,V}(\cH_{1})$ and $\partial_{U,V,2}$ the differential on $\cfk_{U,V}(\cH_{2})$. For a suitable choice of complex structure, 

\begin{equation} \label{eq2}
 \partial_{U,V,2} = \begin{bmatrix}
    \partial_{U,V,1}      & V_{k+1}-V_{i}  \\
    U_{a_{1}(i)}-U_{k+1}       & \partial_{U,V,1} \\
\end{bmatrix} = \begin{bmatrix}
    \partial_{U,V,1}      & V_{k+1}-V_{i}  \\
    U_{a_{2}(k+1)}-U_{b_{2}(k+1)}       & \partial_{U,V,1} \\
\end{bmatrix} 
\end{equation}

\end{lem}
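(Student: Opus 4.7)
The plan is to prove Lemma 2.8 by a standard neck-stretching argument, adapted to track both $U$- and $V$-variables simultaneously. Since $\alpha_{g+k}$ meets $\beta_{g+k}$ in exactly the two transverse points $x$ and $y$, there is a bijection $\mathbb{T}_{\alpha_{2}}\cap\mathbb{T}_{\beta_{2}} \cong \{x,y\}\times(\mathbb{T}_{\alpha_{1}}\cap\mathbb{T}_{\beta_{1}})$, which organizes $\cfk_{U,V}(\cH_{2})$ as a two-row block complex over $\cfk_{U,V}(\cH_{1})$. The content of the lemma is the computation of the block form of $\partial_{U,V,2}$ in this decomposition.

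Next I would stretch the almost complex structure along a simple closed curve separating the small stabilization region (containing $\alpha_{g+k}$, $\beta_{g+k}$, $w_{k+1}$, $z_{k+1}$) from the rest of $\Sigma$. By Gromov compactness in Lipshitz's cylindrical setup, a Maslov-index-$1$ disc counted by $\partial_{U,V,2}$ breaks in the limit into a matched pair of discs, one on each side of the neck, whose Maslov indices sum to $1$. In one case, the stabilization-side piece is constant at $x$ or at $y$ while the complementary piece is an arbitrary index-$1$ disc in $\cH_{1}$; these contributions assemble exactly into the diagonal copies of $\partial_{U,V,1}$. In the other case, the complement-side piece is constant and the stabilization-side piece is a Maslov-index-$1$ local bigon. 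Inspecting the stabilization picture, the only such bigons are four ``thin strips'': two $V$-bigons passing through exactly one of $\{z_{k+1},z_{i}\}$ and no $w$, and two $U$-bigons passing through exactly one of $\{w_{k+1},w_{a_{1}(i)}\}$ and no $z$, with the two $V$-bigons running between $x$ and $y$ in one direction and the two $U$-bigons running in the other.

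The heart of the argument is the sign analysis. Working in the system of orientations of Section 2.2, where $\alpha$-boundary degenerations count with $+$ and $\beta$-boundary degenerations count with $-$, the two $V$-bigons in each pair differ by an $\alpha$-boundary degeneration, so they contribute with opposite signs; the two $U$-bigons similarly differ by a $\beta$-boundary degeneration. Combining the four contributions then produces precisely $V_{k+1}-V_{i}$ in the upper-right block and $U_{a_{1}(i)}-U_{k+1}$ in the lower-left block of (\ref{eq2}). The alternative form in (\ref{eq2}) follows by reading off from the stabilized picture that $w_{a_{1}(i)}$ is the $\alpha$-partner of $z_{k+1}$ and that $w_{k+1}$ is its $\beta$-partner in $\cH_{2}$, so $a_{2}(k+1)=a_{1}(i)$ and $b_{2}(k+1)=k+1$.

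The main obstacle is the sign check for the ``wrap-around'' bigons. In the standard $U$-only complex $\cfk^{-}$ one sets all $V_{j}=0$ and the $V$-bigons drop out entirely, so the analogous matrix formula only involves the $U$-block and has already been established in [Akram, Zemke1]. Here the $V$-bigons survive and one must verify that the fixed system of orientations assigns the claimed signs to all four local bigons simultaneously, including the long bigon whose support extends through the basepoint $z_{i}$ of the unstabilized diagram. Because the $\alpha$- and $\beta$-boundary degeneration arguments play symmetric roles in the two off-diagonal blocks, however, this reduces to applying the sign analysis of [Akram, Zemke1] in parallel to each block separately.
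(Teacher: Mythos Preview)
The paper gives no proof of its own here; immediately after the statement it simply cites \cite{Akram} for the formula over $\Z$ (noting that those authors set $V_{k+1}=V_{i}$, but that their argument goes through without that identification) and \cite{Zemke1} for the $\Z_{2}$ version. Your neck-stretching sketch is exactly the method those references use, so your approach and the paper's are the same.

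One refinement is worth flagging. In the picture dictated by the lemma (so that $a_{2}(k+1)=a_{1}(i)$ and $b_{2}(k+1)=k+1$), $\alpha_{g+k}$ encloses $\{z_{i},w_{k+1}\}$ and $\beta_{g+k}$ encloses $\{w_{k+1},z_{k+1}\}$. Only three of the four off-diagonal contributions then arise as honest embedded bigons in the stabilization region: the lens and the two crescents, carrying $w_{k+1}$, $z_{i}$, and $z_{k+1}$ respectively. The remaining term $U_{a_{1}(i)}$ comes from an index-one domain supported on the rest of the ambient $\alpha$-region of $\cH_{1}$ containing $w_{a_{1}(i)}$; this domain is bounded in part by old $\alpha$-curves and is not a bigon, so showing it has a unique holomorphic representative with the right sign genuinely requires the limiting analysis carried out in \cite{Akram} and \cite{Zemke1}, not just a direct enumeration of thin strips. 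You correctly anticipate that one contribution is ``long'' and defer the hard step to the references, which is the right move, but it is the $w_{a_{1}(i)}$ term rather than the $z_{i}$ term that is the non-local one.
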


This formula is proved over $\Z$ by Alishahi and Eftekhary in \cite{Akram}, although they set $V_{k+1}=V_{i}$ to make $\partial^{2}=0$. However, repeating their argument without performing this quotient yields the formula above. It was also proved over $\Z_{2}$ by Zemke in \cite{Zemke1}, where he proves naturality of (0,3)-stabilization.

\begin{cor} \label{cor1}

Let $\cH$ be a Heegaard diagram for $L$. For any $1 \le i \le k$ there is a homotopy $H_{i}: \cfk_{U,V}(\cH) \to \cfk_{U,V}(\cH)$ such that 

\[ \partial_{U,V}H_{i} + H_{i} \partial_{U,V} = U_{a(i)} - U_{b(i)} \]

\end{cor}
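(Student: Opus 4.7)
The plan is to obtain $H_i$ by \emph{formally differentiating} the differential $\partial_{U,V}$ with respect to the variable $V_i$. The key observation is that the curvature identity $\partial_{U,V}^{2}=\omega=\sum_{j=1}^{k}(U_{a(j)}-U_{b(j)})V_j$ from equation \eqref{d2} is an identity of polynomials in the ground-ring variables, and $\partial\omega/\partial V_i = U_{a(i)}-U_{b(i)}$. Applying the product rule $\partial/\partial V_i$ to both sides should produce the desired null-homotopy.

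To make this precise, regard $\partial_{U,V}$ as a matrix $(M_{yx})$ indexed by generators $x,y\in \mathbb{T}_\alpha \cap \mathbb{T}_\beta$, with entries in $R=\Q[U_1,\dots,U_k,V_1,\dots,V_k]$ (passing to a formal power series completion if the diagram is not admissible). Each matrix entry
\[ M_{yx} = \sum_{\substack{\phi \in \pi_2(x,y)\\ \mu(\phi)=1,\, n_p(\phi)=0}} \#\widehat{\mathcal{M}}(\phi)\, U_1^{n_{w_1}(\phi)}\cdots U_k^{n_{w_k}(\phi)}V_1^{n_{z_1}(\phi)}\cdots V_k^{n_{z_k}(\phi)} \]
depends polynomially on $V_i$, so we may define an $R$-linear map $H_i$ with matrix entries $(H_i)_{yx} := \partial M_{yx}/\partial V_i$; explicitly,
\[ H_i(x) = \sum_y \sum_{\substack{\phi \in \pi_2(x,y)\\ \mu(\phi)=1,\, n_p(\phi)=0}} n_{z_i}(\phi)\,\#\widehat{\mathcal{M}}(\phi)\, U_1^{n_{w_1}(\phi)}\cdots U_k^{n_{w_k}(\phi)}V_1^{n_{z_1}(\phi)}\cdots V_i^{n_{z_i}(\phi)-1}\cdots V_k^{n_{z_k}(\phi)}\, y, \]
with the convention that contributions with $n_{z_i}(\phi)=0$ simply vanish, so no negative powers of $V_i$ actually appear.

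Differentiating the matrix identity $\sum_{z}M_{yz}M_{zx}=\omega\,\delta_{yx}$ entrywise by the ordinary Leibniz rule yields
\[ \sum_{z}(H_i)_{yz}M_{zx} + \sum_{z}M_{yz}(H_i)_{zx} = (U_{a(i)}-U_{b(i)})\delta_{yx}, \]
which is precisely the operator identity $\partial_{U,V} H_i + H_i \partial_{U,V} = U_{a(i)} - U_{b(i)}$. A bigrading check shows that $H_i$ has Maslov--Alexander bidegree $(-1,-1)$, so both sides of the homotopy equation land in bidegree $(-2,-1)$, matching $U_{a(i)}-U_{b(i)}$.

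The only subtle point is the legitimacy of the formal differentiation, which just requires that each $M_{yx}$ be a polynomial (or convergent formal power series) in $V_i$ and that we work over a characteristic-zero ring so the integer weights $n_{z_i}(\phi)$ are harmless. An alternative approach would be to extract $H_i$ from Lemma \ref{30stab}: on the stabilized complex $\cfk_{U,V}(\cH_2)=\cfk_{U,V}(\cH_1)\otimes W$, the shift defined by $a\otimes e_1 \mapsto 0$ and $b\otimes e_2 \mapsto b\otimes e_1$ satisfies the analogous relation with $U_{a_1(i)}-U_{k+1}$ on the right-hand side, and one could try to transport this back to $\cH_1$ via the $(0,3)$-stabilization equivalence. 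That route works but is more cumbersome, so the formal-derivative construction is preferred.
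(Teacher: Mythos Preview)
Your proof is correct and takes a genuinely different route from the paper. The paper argues geometrically: when $a(i)=b(i)$ it takes $H_i=0$, and otherwise it modifies $\cH$ by isotopies and handleslides to a diagram $\cH'$ that looks like a $(0,3)$-stabilization near $z_i$, reads off the homotopy $H_i'=\begin{psmallmatrix}0&0\\1&0\end{psmallmatrix}$ from the explicit $2\times 2$ description in Lemma~\ref{30stab}, and then transports it back along the curved homotopy equivalence $\cfk_{U,V}(\cH)\simeq\cfk_{U,V}(\cH')$. Your formal-derivative argument is more direct and yields an explicit disc-counting formula for $H_i$ on the \emph{original} diagram, with no need to change diagrams or invoke any equivalences; it also makes the bigrading of $H_i$ transparent. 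The paper's approach, on the other hand, ties the null-homotopy to the geometric picture of stabilization and is the same mechanism later reused in the proof of invariance under $(0,3)$-stabilization, so it fits the narrative there. Amusingly, the ``alternative approach'' you sketch at the end is essentially the paper's actual proof. One small remark: your parenthetical about passing to power series is unnecessary once one assumes, as is standard, that $\cH$ is weakly admissible so that each matrix entry $M_{yx}$ is already a polynomial.
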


\begin{proof}

In the special case where $a(i) = b(i)$, take $H_{i}=0$. Otherwise the diagram $\cH$ can be changed to a diagram $\cH'$ which looks like a (0,3) stabilization around $z_{i}$ via isotopies and handlesides. By the previous theorem, for a suitable choice of complex structure the differential $\partial_{U,V}'$ can be written as a two by two matrix as in (\ref{eq2}). Taking 
\[ H'_{i} = \begin{bmatrix}
    0      & 0 \\
    1       & 0\\
\end{bmatrix} \]

\noindent
we get $\partial_{U,V}'H_{i}' + H_{i}'\partial_{U,V}' = U_{a(i)} - U_{b(i)}$. Since $\cfk_{U,V}(\cH)$ is homotopy equivalent to $\cfk_{U,V}(\cH')$, this proves the corollary.

\end{proof}

\begin{proof}[Proof of Theorem \ref{inv}]

Most of the work has already been done for us. The previous lemmas regarding $\cfk_{U,V}$ of normal Heegaard diagrams immediately extend to punctured Heegaard diagrams viewing the puncture as an unknotted component of the link with basepoints $w_{0}$ and $z_{0}$, and then setting $U_{0}=V_{0}=0$ in the ground ring.

 Plugging $V_{j} = (U_{a(j)}^{n} - U_{b(j)}^{n})/(U_{a(j)}-U_{b(j)})$ into Lemma \ref{firstmoves} shows that the chain homotopy type of the complex $\cfk_{n}$ is invariant under change of complex structure maps, isotopies, handleslides, and (1,2)-stabilization. The only remaining Heegaard move to check it for is (0,3)-stabilization. 

Let $\cH_{1}$ and $\cH_{2}$ be two punctured Heegaard diagrams which differ by (0,3)-stabilization as in Lemma \ref{30stab}. The two $\cfk_{U,V}$ complexes are related as follows:

\begin{figure}[!h]
\centering
\begin{tikzpicture}
  \node at (-6,0) {$\cfk_{U,V}(\cH_{2})=$};
  \matrix (m) [matrix of math nodes,row sep=5em,column sep=8em,minimum width=2em] {
     \cfk_{U,V}(\cH_{1})& \cfk_{U,V}(\cH_{1}) \\};
  \path[-stealth]
    (m-1-1) edge [bend left=15] node [above] {$U_{a_{1}(i)}-U_{k+1}$} (m-1-2)
    (m-1-2) edge [bend left=15] node [below] {$V_{k+1}-V_{i}$} (m-1-1);
\end{tikzpicture}
\end{figure}

\noindent
There is some subtlety here in passing from $\cfk_{U,V}$ to $\cfk_{n}$. In $\cH_{1}$, the basepoint $z_{i}$ is connected to $w_{a_{1}(i)}$ on the $\alpha$ side, but in $\cH_{2}$ it is connected to $w_{k+1}$, so equation (\ref{substitute}) will be substituting different values in the two cases. Let $\cfk'_{n}(\cH_{1})$ denote the quotient in which $V_{j}=(U_{a_{1}(j)}^{n}-U_{b(j)}^{n})/(U_{a_{1}(j)}-U_{b(j)})$ for $j \ne i$, and $V_{i}=(U_{k+1}^{n}-U_{b(j)}^{n})/(U_{k+1}-U_{b(i)})$. Then

\begin{figure}[!h]
\centering
\begin{tikzpicture}
  \node at (-6.5,0) {$\cfk_{n}(\cH_{2})=$};
  \matrix (m) [matrix of math nodes,row sep=5em,column sep=8em,minimum width=2em] {
     \cfk'_{n}(\cH_{1})[U_{k+1}]& \cfk'_{n}(\cH_{1})[U_{k+1}] \\};
  \path[-stealth]
    (m-1-1) edge [bend left=15] node [above] {$U_{a_{1}(i)}-U_{k+1}$} (m-1-2)
    (m-1-2) edge [bend left=15] node [below] {$V_{k+1}-V_{i}$} (m-1-1);
\end{tikzpicture}
\end{figure}

\noindent
where $V_{k+1}-V_{i}$ is now equivalent to a degree $n-1$ polynomial in the $U_{j}$. This complex is chain homotopy equivalent to $ \cfk'_{n}(\cH_{1})[U_{k+1}]/(U_{k+1}=U_{a_{1}(i)})$ over $\Q[U_{1},...,U_{k}]$. But $U_{k+1}=U_{a_{1}(i)}$ implies $V_{i}=(U_{a_{1}(i)}^{n}-U_{b(i)}^{n})/(U_{a_{1}(j)}-U_{b(i)})$, which is its value in $\cfk_{n}(\cH_{1})$. Thus, $\cfk_{n}(\cH_{2})$ is chain homotopy equivalent to $\cfk_{n}(\cH_{2})$ over $\Q[U_{1},...,U_{k}]$.

We have shown invariance of the chain homotopy type of $\cfk_{n}$ under all Heegaard moves. However, whenever we perform a (0,3)-destabilization, we lose one variable in our ground ring. Since we can destabilize the Heegaard diagram for $L$ until there is only one pair of basepoints on each component, the chain homotopy type is only an invariant as a module over $\Q[U_{1},...,U_{l}]$, where $l$ is the number of components of $L$ and $w_{1},...,w_{l}$ all lie on different components of $L$.

\end{proof}

\subsection{An Alternate Definition for Knots} \label{alt} Our computation of $\hfk_{n}(T_{2,3})$ demonstrates a method that can be used for any knot $K$. Let $\cH$ be an unpunctured Heegaard diagram for $K$ with a single pair of basepoints $w$ and $z$. Then the master complex $\cfk_{U,V}(\cH)$ is true chain complex, i.e. $\partial_{U,V}^{2}=0$. In this case, the homology $\hfk_{n}(K)$ can be computed directly from $\cfk_{U,V}(\cH)$ without passing to a punctured Heegaard diagram. This is particularly useful as the master complex has already been computed for many knots, including torus knots \cite{OS4}.

\begin{lem} \label{puncturelemma2}

If $\cH$ is an unpunctured Heegaard diagram for $K$ with a single pair of basepoints $w,z$, then 
\[ \hfk_{n}(K) = H_{*}(\cfk_{U,V}(\cH)/\{UV=0, V=nU^{n-1} \})\]

\end{lem}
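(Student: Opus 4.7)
The plan is to construct a specific punctured Heegaard diagram $\cH_p$ from $\cH$, compute $\cfk_n(\cH_p)$ explicitly, and verify that it coincides with the complex $\cfk_{U,V}(\cH)/\{V=nU^{n-1},\,UV=0\}$. By Theorem~\ref{inv}, any punctured diagram for $K$ computes $\hfk_n(K)$, so this suffices. First I would produce $\cH_p$ by choosing a point $p$ in a face $R$ of $\Sigma\setminus(\alpha\cup\beta)$ containing both $w$ and $z$ (such a face exists after a small isotopy in the single-pair case), and adjoining two small curves $\alpha_{g+1},\beta_{g+1}$ inside $R$ that encircle $p$ and meet each other transversely at two points $x_a,x_b$, together with the identification $w_0=z_0=p$ that makes this a valid punctured diagram for $K$.

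Next I would compute $\cfk_{U,V}(\cH_p)$ as a mapping cone. Its generators are naturally identified with $X\times\{x_a,x_b\}$, where $X$ is the generator set of $\cH$, and the differential decomposes into discs inherited from $\cH$ acting on each copy of $C:=\cfk_{U,V}(\cH)$, plus local discs between $(\cdot,x_a)$ and $(\cdot,x_b)$. These local discs arise from the four subregions created by $\alpha_{g+1}\cup\beta_{g+1}$ inside $R$: the lens around $p$ (contributing $U_0V_0$), two half-moons whose contributions cancel under the standard orientation convention as in the Unknot example, and the outside region (containing $w$ and $z$), which supports a Maslov-index-$1$ disc class carrying the coefficient $UV$. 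Consequently
\[ \cfk_{U,V}(\cH_p)\;\cong\;\mathrm{cone}\bigl(UV+U_0V_0\colon C\to C\bigr). \]
Imposing the puncture identification $U_0=V_0=0$ and then the $\cfk_n$ substitution $V=nU^{n-1}$ (valid since $a(1)=b(1)=1$ for the single pair in $\cH$), this becomes the mapping cone of $nU^n$ acting on $\cfk_n(\cH):=C/\{V=nU^{n-1}\}$. Because $C$ is free over $\Q[U,V]$, the quotient $\cfk_n(\cH)$ is free over $\Q[U]$, so multiplication by $nU^n$ is injective and the mapping cone's long exact sequence collapses to
\[ \hfk_n(K)\;=\;H_*\bigl(\cfk_n(\cH)/U^n\bigr)\;=\;H_*\bigl(\cfk_{U,V}(\cH)\,\big/\,\{V=nU^{n-1},\,UV=0\}\bigr), \]
using that $UV=nU^n$ after the substitution.

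The main obstacle is verifying that the outside region contributes a Maslov-index-$1$ disc carrying $UV$, since this region is topologically an annulus in $\Sigma$ (when $R$ is a disc) rather than a bigon. One approach is to compute the Maslov index directly via Lipshitz's combinatorial formula and to compare signs with the Unknot example, where exactly the same phenomenon occurs on $S^2$. Alternatively, one can first perform a (1,2)-stabilization of $\cH$ and arrange $R$ to be ``sphere-like,'' so that the outside region becomes a topological bigon and the computation reduces formally to the Unknot case applied in a local neighborhood.
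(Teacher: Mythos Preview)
Your overall strategy---build a punctured diagram by adding a small $\alpha_{g+1},\beta_{g+1}$ pair around a new point $p$, identify $\cfk_{U,V}$ of the punctured diagram with the mapping cone of $UV$ on $\cfk_{U,V}(\cH)$, then substitute $V=nU^{n-1}$ and use injectivity of $nU^n$ on the free $\Q[U]$-module to pass to the quotient---is exactly the paper's argument. The paper phrases the last step as ``canceling the arrow'' and appeals to the $(0,3)$-stabilization analysis for the disc count; your explicit long-exact-sequence justification is a welcome clarification.

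There is, however, a genuine error in your setup. You ask for a face $R$ of $\Sigma\setminus(\alpha\cup\beta)$ containing \emph{both} $w$ and $z$. For a doubly pointed diagram this forces $K$ to be the unknot: if $w$ and $z$ lie in the same region of $\Sigma\setminus(\alpha\cup\beta)$ then they lie in the same component of $\Sigma\setminus\alpha$ and of $\Sigma\setminus\beta$, so the two connecting arcs can be taken short and isotopic, giving a trivial knot. No isotopy of curves that avoids the basepoints can change this. Relatedly, your ``outside region'' is not the annulus $R\setminus(\text{lens}\cup\text{half-moons})$; that annulus is not a valid Whitney domain from $(g,x)$ to $(g,y)$, since its outer boundary produces corners at intersection points of the original $\alpha$ and $\beta$ curves. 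The correct ``outside'' domain is the global one, $\Sigma$ minus the lens, which has multiplicity one in every region except the lens and hence automatically has $n_w=n_z=1$ regardless of where $w$ and $z$ sit.

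The fix is simply to drop the requirement on $R$: place $p$ in any region away from the basepoints, as the paper does. Then the four local discs behave exactly as you describe (two half-moons from $y$ to $x$ canceling, the lens from $x$ to $y$ blocked by $p$, and the global complement from $x$ to $y$ carrying $UV$), and your ``main obstacle'' dissolves: the outside domain has Maslov index one and a unique holomorphic representative after neck-stretching, by the same analysis as for $(0,3)$-stabilization. With that correction your proof is complete and matches the paper's.
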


\begin{proof}

Let $\cH'$ be the punctured diagram obtained from $\cH$ by adding $p, \alpha_{g+k}, \beta_{g+k}$ as in Figure \ref{Puncture2} away from any curves or basepoints on $\cH$. The analysis for this diagram is the same as in the (0,3)-stabilization diagram - the only difference is the location of basepoints. 
\begin{figure}[ht]
\centering
\def\svgwidth{4cm} 
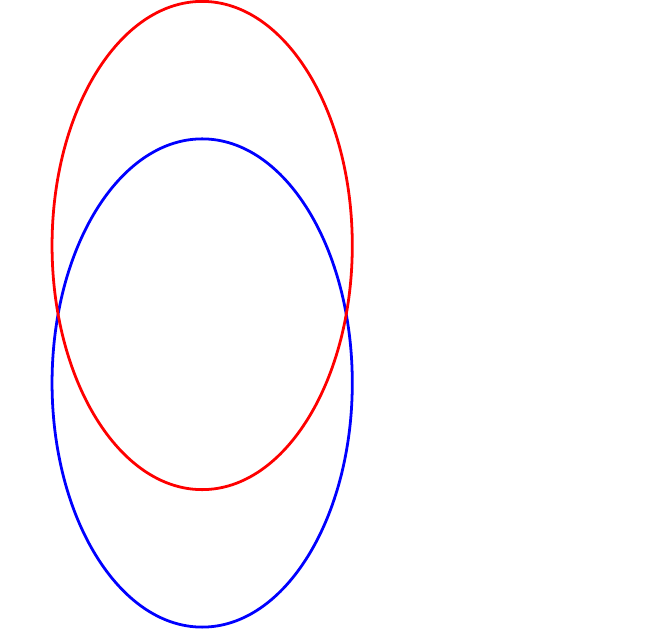
\caption{Adding a puncture to $\cH$}\label{Puncture2}
\end{figure}

For a suitable choice of complex structure, there are two homotopy classes of discs from $y$ to $x$ which contribute coefficient $1$ and $-1$, respectively, and two homotopy classes of discs from $x$ to $y$. One has $n_{p}(\phi)=1$, so it doesn't contribute. The other has $n_{w}(\phi)=n_{z}(\phi)=1$, so it comes with coefficient $UV$. Thus, $\cfk_{U,V}(\cH')$ is given by 
\[ \cfk_{U,V}(\cH)\{-1,0\} \xrightarrow{\hspace{3mm}UV\hspace{3mm}} \cfk_{U,V}(\cH)\{0,0\}  \]

\noindent
Canceling this arrow yields $\cfk_{U,V}(\cH)\{0,0\}/(UV=0)$. Substituting $V=nU^{n-1}$ yields the lemma.

\end{proof}

\begin{cor}

For any knot $K$, $\hfk_{n}(K)$ is a finitely generated $\Q[U]/(U^{n}=0)$ module. In particular, $\hfk_{n}(K)$ is finite dimensional over $\Q$.

\end{cor}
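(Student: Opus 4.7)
The plan is to reduce the statement to a direct module-theoretic inspection of the complex produced by Lemma \ref{puncturelemma2}. First I would choose an unpunctured Heegaard diagram $\cH$ for $K$ with a single pair of basepoints $(w,z)$; every knot admits such a diagram. By Lemma \ref{puncturelemma2},
\[ \hfk_{n}(K) \;=\; H_{*}\bigl(\cfk_{U,V}(\cH)\big/\{UV=0,\; V=nU^{n-1}\}\bigr), \]
so it suffices to analyze the complex on the right-hand side.

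The key observation is that once we substitute $V=nU^{n-1}$, the relation $UV=0$ becomes $nU^{n}=0$, which in characteristic zero is equivalent to $U^{n}=0$. Hence the effective ground ring of the quotient complex is
\[ R \;:=\; \Q[U]/(U^{n}), \]
with $V$ acting as multiplication by the element $nU^{n-1}\in R$. Now $\cfk_{U,V}(\cH)$ is a free $\Q[U,V]$-module on the finite set $\mathbb{T}_{\alpha}\cap\mathbb{T}_{\beta}$ of intersection points, and so after the substitution and quotient the resulting chain complex is a free $R$-module whose rank equals $|\mathbb{T}_{\alpha}\cap\mathbb{T}_{\beta}|<\infty$.

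To finish, I would invoke the standard fact that the homology of a bounded complex of finitely generated modules over a Noetherian ring is again finitely generated: $R$ is Noetherian (indeed Artinian), so $\hfk_{n}(K)$ is a finitely generated $R$-module. For the second assertion, $R$ has $\Q$-dimension $n$, so any finitely generated $R$-module is automatically finite-dimensional over $\Q$.

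I do not anticipate a serious obstacle here: the argument is essentially formal once Lemma \ref{puncturelemma2} is in hand. The only subtle point is the use of characteristic zero to pass from $nU^{n}=0$ to $U^{n}=0$, which is compatible with the paper's standing convention of working over $\Q$; over a ring in which $n$ is not invertible the effective ring would be $\Q[U]/(nU^{n})$ instead, but this does not affect the finite generation or finite dimensionality conclusion.
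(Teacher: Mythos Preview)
Your argument is correct and is exactly the intended deduction from Lemma~\ref{puncturelemma2}: after the substitution the ground ring collapses to $\Q[U]/(U^{n})$, the chain complex is free of finite rank over this Artinian ring, and the homology is therefore a finitely generated module and hence finite-dimensional over $\Q$. The paper states this as an immediate corollary without further proof, and your write-up simply makes that implicit reasoning explicit.
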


\subsection{The Reduced Homology Theories $\widehat{\hfk}_{n}(L)$} \label{reducedsection} In HOMFLY-PT homology, the reduced theory is not obtained from the unreduced theory by setting one of the variables equal to zero. Instead, one has to pass through an intermediate theory called the middle HOMFLY-PT homology. Unreduced HOMFLY-PT homology is obtained from the middle theory by tensoring with a two-dimensional vector space, and reduced HOMFLY-PT homology is obtained from the middle theory by setting one of the variables equal to zero.

We will be defining the reduced versions of $\hfk_{n}$ in an analogous way. Let $L$ be a link in $Y$, and let $\cH$ be a punctured Heegaard diagram for $L$. Define $\cfk(L)=\cfk^{-}(\cH)$, and let $\hfk(L)$ be the homology of this complex.

\begin{lem} The homology $\hfk(L)$ is given by 
\[ \hfk(L) \cong \hfk^{-}(L) \otimes V \]

\noindent
where $V=\Q\{-1,-1/2\} \oplus \Q\{0,-1/2\}$.

\end{lem}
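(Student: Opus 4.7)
The plan is to leverage Remark \ref{remark1}: a punctured Heegaard diagram $\cH$ for $L$ is a bona fide (unpunctured) Heegaard diagram for the link $L \sqcup K_{0}$, where $K_{0}$ is a trivial unknot unlinked from $L$ whose two basepoints $w_{0},z_{0}$ are identified with the puncture $p$. The condition $n_{p}(\phi)=0$ in the definition of $\cfk^{-}(\cH)$ translates to $n_{w_{0}}(\phi) = n_{z_{0}}(\phi) = 0$, so the variable $U_{0}$ attached to $K_{0}$ is forced to act trivially on the complex.

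Under this reinterpretation I would apply the K\"unneth principle for disjoint unions of links. By choosing an almost complex structure that splits near the two factors, so that all Maslov-index-$1$ discs lie entirely in the $L$ part or entirely in the $K_{0}$ part, one obtains a chain-homotopy equivalence
\[ \cfk^{-}(\cH) \;\simeq\; \cfk^{-}(L) \otimes_{\Q} C_{0}, \]
where $C_{0}$ is the local chain complex associated to the unknot $K_{0}$ with the puncture constraint.

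Next I would compute $C_{0}$ directly from the minimal punctured Heegaard diagram for the unknot (Figure \ref{UnknotHD}). The master complex there is $\Q[U,V]\{-1,-1/2\} \xrightarrow{UV} \Q[U,V]\{0,-1/2\}$; the unique non-zero differential has $n_{z}=1$ and is therefore killed by the constraint $n_{\mathbf{z}}=0$ defining $\cfk^{-}$. With $U_{0}$ forced to act trivially as above, this leaves $C_{0} = V = \Q\{-1,-1/2\} \oplus \Q\{0,-1/2\}$ with zero differential.

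Combining these two reductions and passing to homology yields $\hfk(L) \cong \hfk^{-}(L) \otimes V$. The main obstacle will be justifying that a suitably chosen almost complex structure genuinely decouples holomorphic discs in the two factors; this is the standard step in the K\"unneth formula for knot Floer homology, and it adapts cleanly to the punctured setting because the constraint $n_{p}=0$ forbids any disc that mixes the two factors. A secondary subtlety is tracking the absolute grading shifts, but these are visible already in the unknot computation and contribute precisely the $\{-1,-1/2\}$ and $\{0,-1/2\}$ summands of $V$.
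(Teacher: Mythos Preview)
Your approach---reinterpret the punctured diagram as an unpunctured diagram for $L \sqcup K_{0}$ via Remark \ref{remark1} and then peel off the unknot factor---is exactly the paper's; the paper states it in one line and appeals to the grading convention, whereas you spell out the K\"unneth step and the local computation. One small slip: in Figure \ref{UnknotHD} the variable $U$ belongs to the genuine unknot $L$ rather than to the auxiliary $K_{0}$, so the two-dimensionality of $C_{0}$ really comes from factoring out $\cfk^{-}(\unknot)=\Q[U]$, not from setting ``$U_{0}=0$''; the cleaner local model for the factor $C_{0}$ is Figure \ref{Puncture2}, where no link basepoints appear and the two generators automatically carry zero differential in $\cfk^{-}$.
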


\begin{proof}
This follows from interpreting the punctured Heegaard diagram as an unpunctured Heegaard diagram for $L \sqcup (\unknot)$ together with our grading convention for punctured Heegaard diagrams. 

\end{proof}

Thus, in our analogy with HOMFLY-PT homology, $\hfk(L)$ corresponds to the unreduced theory and $\hfk^{-}(L)$ corresponds to the middle theory. For this reason, we define the reduced theory as follows.

\begin{defn}

Let $L$ be a decorated link, i.e. a link with a marked component. Let $\cH$ be an unpunctured Heegaard diagram for $L$, with $w_{i}$ one of the basepoints on the marked component. Define the reduced complex $\widehat{\cfk}_{n}(L)$ by 
\[ \widehat{\cfk}_{n}(L) = \cfk_{n}(\cH)/U_{i}=0 \]

\noindent
The reduced homology $\widehat{\hfk}_{n}(L)$ is the homology of this complex.

\end{defn}

Note that this definition depends on a choice of component of the link $L$. The fact that it does not depend on the choice of basepoint $w_{i}$ on the component follows from Corollary \ref{cor1}.

\begin{lem}

Let $K$ be a knot, and let $n\ge 2$. Then $\widehat{\hfk}_{n}(K) \cong \widehat{\hfk}(K)$, viewing $\widehat{\hfk}(K)$ as singly graded with grading $\gr_{n}$.

\end{lem}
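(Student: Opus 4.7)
The plan is to pass to a Heegaard diagram with a single pair of basepoints and observe that, for $n\ge 2$, the substitution $V = nU^{n-1}$ vanishes modulo $U$, so the reduced complex collapses onto the hat complex on the nose.

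First, by Theorem \ref{inv} together with Corollary \ref{cor1}, we may choose an unpunctured Heegaard diagram $\cH$ for $K$ with a single pair of basepoints $(w,z)$: the chain-homotopy type of $\cfk_n$ is a Heegaard invariant, so repeated (0,3)-destabilization reduces to this case, and independence of $\widehat{\cfk}_n$ from the choice of basepoint on the marked component is already built into the definition. In this situation $a(1) = b(1) = 1$, so the curvature $\partial_{U,V}^{2} = (U_{a(1)} - U_{b(1)})V$ vanishes identically and $\cfk_{U,V}(\cH)$ is a genuine chain complex.

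Next, I would unwind the definitions. The reduced complex is
\[ \widehat{\cfk}_n(K) \;=\; \cfk_n(\cH)/(U = 0) \;=\; \cfk_{U,V}(\cH)\big/\bigl(V - nU^{n-1},\; U\bigr). \]
For $n\ge 2$, the element $nU^{n-1}$ lies in the ideal $(U)$, so the composite quotient is simply $\cfk_{U,V}(\cH)/(U=0,\,V=0)$. Setting $U=V=0$ in the master differential kills every term except those coming from classes $\phi$ with $n_w(\phi) = n_z(\phi) = 0$, and the remaining sum is by definition the hat differential $\widehat{\partial}$. Hence we obtain an identification of chain complexes $\widehat{\cfk}_n(K) \cong \widehat{\cfk}(K)$; taking homology yields the desired isomorphism.

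Finally, for the grading: the hat complex carries the bigrading $(M,A)$, and we may define $\gr_n = -nM + 2(n-1)A$ on it. Since $\widehat{\partial}$ lowers $M$ by $1$ and preserves $A$, it raises $\gr_n$ by $n$, matching the degree of $\partial_n$ noted after the definition of $\cfk_n$. So the identification above respects $\gr_n$, proving the lemma. The only real obstacle I anticipate is making the reduction to a single-pair diagram airtight; if that step were delicate, I would argue directly in the multi-pointed setting instead, noting that for $n\ge 2$ each substitution $V_i = (U_{a(i)}^n - U_{b(i)}^n)/(U_{a(i)} - U_{b(i)})$ has no constant term in the $U_j$, and invoking Corollary \ref{cor1} so that setting $U_i=0$ for the chosen basepoint agrees up to chain homotopy with setting all $U_j=0$, after which the same calculation recovers the hat differential.
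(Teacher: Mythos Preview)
Your proposal is correct and follows essentially the same approach as the paper: choose a doubly pointed diagram, note that the substitution reads $V=nU^{n-1}$, and observe that for $n\ge 2$ setting $U=0$ forces $V=0$, so $\widehat{\cfk}_{n}(\cH)=\widehat{\cfk}(\cH)$ with the $\gr_{n}$ grading. You simply spell out more of the justifications (reduction to a single pair of basepoints, the grading check) than the paper does.
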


\begin{proof}

Let $\cH$ be a Heegaard diagram for $K$ with a single $w$ and $z$ basepoint. Then $V=nU^{n-1}$, so for $n \ge 2$, setting $U=0$ also forces $V=0$. Thus, $\widehat{\cfk}_{n}(\cH) = \widehat{\cfk}(\cH)$ if we view $\widehat{\cfk}(\cH)$ as singly graded with grading $\gr_{n}$.

\end{proof}

In the special case where $n=1$, each $V_{i}=1$ and $\gr_{n}$ is just the Maslov grading, so $\widehat{\hfk}_{1}(L) \cong \widehat{\mathit{HF}}(Y) $.

\subsection{Properties of $\hfk_{n}(L)$} The homology theories $\hfk_{n}$ share many properties with $\sln$ homology, which we will describe in this section. One of the most interesting is the fact that both are finite-dimensional over $\Q$.

\begin{lem}

Let $L$ be an $l$-component link. Then $\hfk_{n}(L)$ is a finitely-generated module over $\Q[U_{1},...,U_{l}]/(U_{1}^{n}=...=U_{l}^{n}=0)$, where the $w_{i}$ are ordered such that $w_{1},...,w_{l}$ each lie on a different component of $L$.

\end{lem}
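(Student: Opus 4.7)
The plan is to establish the two parts of the claim: (a) finite generation of $\hfk_n(L)$ as a $\Q[U_1,\ldots,U_l]$-module, and (b) that each $U_i^n$ annihilates $\hfk_n(L)$.

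For (a), by Theorem \ref{inv} and repeated (0,3)-destabilization we may compute $\hfk_n(L)$ from a punctured Heegaard diagram $\cH$ with exactly $l$ pairs of basepoints, one on each component of $L$. The chain complex $\cfk_n(\cH)$ is then a free module over $\Q[U_1,\ldots,U_l]$ of rank equal to $|\mathbb{T}_\alpha\cap\mathbb{T}_\beta|$, which is finite. Hence its homology $\hfk_n(L)$ is finitely generated over this ring.

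For (b), for each $i$ I will mimic the argument of Lemma \ref{puncturelemma2} in a local model obtained by a (0,3)-stabilization of $\cH$. Starting from $\cH$, perform a (0,3)-stabilization between $z_i$ and $w_i$ to introduce a new basepoint pair $w_{k+1}, z_{k+1}$ inside a small disc region $D \subset \Sigma$, and then place the puncture $p$ inside $D$ with its separating curves also contained in $D$. With this local arrangement, as in Figure \ref{Puncture2}, the only Whitney disc from $x$ to $y$ that avoids $p$ picks up only the basepoints $w_{k+1}$ and $z_{k+1}$, contributing the monomial $U_{k+1}V_{k+1}$. Canceling this arrow against the empty lens bigon produces a complex chain-homotopy-equivalent to a quotient in which the chain-level relation $U_{k+1}V_{k+1}=0$ is satisfied. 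In the (0,3)-stabilized diagram one has $a(k+1)=i$ and $b(k+1)=k+1$, so by (\ref{substitute}) $V_{k+1}=(U_i^n-U_{k+1}^n)/(U_i-U_{k+1})$. Invoking the (0,3)-destabilization chain homotopy equivalence from the proof of Theorem \ref{inv} to identify $U_{k+1}$ with $U_i$, this specializes to $V_{k+1}=nU_i^{n-1}$, and the relation $U_{k+1}V_{k+1}=0$ becomes $nU_i^n=0$, i.e.\ $U_i^n=0$ in a complex chain-homotopy-equivalent to $\cfk_n(L)$. Thus $U_i^n$ annihilates $\hfk_n(L)$, and this argument applies to every $i$.

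The main obstacle is the geometric step in (b): one must verify that the (0,3)-stabilization and the placement of the puncture can be arranged so that the only non-puncture Whitney disc from $x$ to $y$ has multiplicity $1$ at $w_{k+1}$ and $z_{k+1}$ and $0$ at every other basepoint of $L$. This is a locality statement: the (0,3)-stabilization creates a small topological disc region $D$ which, by construction, contains only $w_{k+1}$ and $z_{k+1}$ among basepoints, and the puncture's separating curves, being contained in $D$, can only border bigons whose interior is confined to $D$. Granting this local picture, the Lemma \ref{puncturelemma2} cancellation argument applies verbatim to yield the desired relation.
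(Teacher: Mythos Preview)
Part (a) is correct and matches the paper.

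For (b), your route is more circuitous than the paper's and has a gap. The paper starts from an \emph{unpunctured} $\cH$ with one basepoint pair per component and adds the puncture in the region containing $w_i$; invoking Lemma~\ref{puncturelemma2}, it identifies the punctured complex with the mapping cone of $U_iV_i$ on $\cfk_{U,V}(\cH)$, which after substituting $V_i=nU_i^{n-1}$ becomes the cone of $nU_i^{n}$, so $U_i^{n}$ is null-homotopic on $\hfk_n(L)$. No $(0,3)$-stabilization is involved. Your extra stabilization, followed by destabilization to identify $U_{k+1}$ with $U_i$, is a detour the paper avoids; it also leaves ambiguous whether the $\cH$ from (a) already carries a puncture when you then ``place the puncture $p$ inside $D$.''

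More seriously, your locality justification does not hold. The disc in Lemma~\ref{puncturelemma2} that contributes the nontrivial monomial is not a small bigon confined to $D$: it is the complementary domain, with multiplicity one on every region of $\Sigma$ outside the $p$-bigon. Hence the assertion that ``the puncture's separating curves, being contained in $D$, can only border bigons whose interior is confined to $D$'' is false---one of the four regions those curves cut out is the entire exterior of the small configuration. So this argument does not show that only $w_{k+1},z_{k+1}$ are hit. Finally, ``canceling this arrow against the empty lens bigon produces a complex \ldots\ in which $U_{k+1}V_{k+1}=0$'' misstates the mechanism: one cannot Gauss-eliminate a non-unit arrow. What actually makes $U_i^{n}$ vanish, and what the paper uses, is that in any mapping cone of $f$ the map $f$ is tautologically null-homotopic, hence zero on homology.
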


\begin{proof} From Corollary \ref{cor1}, we know that $\hfk_{n}(L)$ is a finitely-generated module over $\Q[U_{1},...,U_{l}]$. It remains to show that $U_{i}^{n}=0$. 

Let $\cH$ be an unpunctured Heegaard diagram for $L$ with a single pair of basepoints $w_{i},z_{i}$ for each component. We add a puncture to $\cH$ as in Figure \ref{Puncture2} in the region containing the basepoint $w_{i}$. Following the proof of Lemma \ref{puncturelemma2}, there is a unique holomorphic disc from $y$ to $x$ which does not pass through $p$, which comes with coefficient $U_{i}V_{i}$. Thus, $\cfk_{n}(L)$ is given by
\[ \cfk_{U,V}(\cH) \xrightarrow{\hspace{3mm}U_{i}V_{i}\hspace{3mm}} \cfk_{U,V}(\cH)\  \]

\noindent
Substituting $V_{j}=nU_{j}^{n-1}$ for all $j$, this becomes 
\[ \cfk_{n}(\cH) \xrightarrow{\hspace{3mm}nU^{n}_{i}\hspace{3mm}} \cfk_{n}(\cH)\  \]

\noindent
Thus, $U_{i}^{n}=0$ on homology.

\end{proof}

Also like $\sln$ homology, disjoint union corresponds to tensor product:

\begin{lem}

Let $(Y_{1}, L_{1})$, $(Y_{2}, L_{2})$ be two links. Then 
\[ \cfk_{n}(Y_{1} \# Y_{2}, L_{1} \sqcup L_{2}) \cong \cfk_{n}(Y_{1}, L_{1}) \otimes \cfk_{n}(Y_{2}, L_{2}) \]

\noindent
where $(Y_{1} \# Y_{2}, L_{1} \sqcup L_{2})$ is the link obtained by taking the connected sum of $(Y_{1},L_{1})$ with $(Y_{2},L_{2})$ away from the links $L_{1}$ and $L_{2}$.

\end{lem}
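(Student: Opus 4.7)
The plan is to realize both sides from a single connect-sum Heegaard diagram and then invoke the standard Ozsv\'{a}th--Szab\'{o} neck-stretching argument for holomorphic discs. Let $\cH_1$ be a punctured Heegaard diagram for $L_1 \subset Y_1$ and $\cH_2$ an unpunctured Heegaard diagram for $L_2 \subset Y_2$. By Remark \ref{remark1} we may regard $\cH_1$ as an unpunctured Heegaard diagram for $L_1 \sqcup \unknot$. Choose generic connect-sum points $q_i \in \Sigma_i$ lying away from all $\alpha$-curves, all $\beta$-curves, all basepoints, and (on the $\cH_1$ side) the puncture region, and lying in regions of $\Sigma_i \setminus \alpha_i$ and $\Sigma_i \setminus \beta_i$ that already contain at least one $w$-basepoint and one $z$-basepoint. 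Form the connect-sum surface $\Sigma = \Sigma_1 \# \Sigma_2$ at these points; the combined curves and basepoints then give a Heegaard diagram $\cH$ that, after again invoking Remark \ref{remark1}, is a punctured Heegaard diagram for $L_1 \sqcup L_2$ in $Y_1 \# Y_2$.

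Next I would identify the generators and the differential. Because $q_1$ and $q_2$ are disjoint from every $\alpha$- and $\beta$-curve, there is a natural bijection
\[ \mathbb{T}_\alpha(\cH) \cap \mathbb{T}_\beta(\cH) \;\cong\; \bigl(\mathbb{T}_{\alpha_1}\cap \mathbb{T}_{\beta_1}\bigr) \times \bigl(\mathbb{T}_{\alpha_2}\cap \mathbb{T}_{\beta_2}\bigr), \]
inducing a module isomorphism $\cfk_{U,V}(\cH) \cong \cfk_{U,V}(\cH_1) \otimes_\Q \cfk_{U,V}(\cH_2)$. Stretching the almost complex structure sufficiently along the connect-sum tube, the standard argument shows that any Maslov index $1$ holomorphic disc in $\cH$ (with $n_p(\phi)=0$) from $(x_1,x_2)$ to $(y_1,y_2)$ degenerates into a holomorphic disc in one factor together with a constant disc in the other; any non-trivial pairing would have Maslov index at least two. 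Since each $w_i$ and each $z_j$ lies strictly on one side of the tube, basepoint multiplicities are preserved under the limit. Consequently
\[ \partial_{U,V} \;=\; \partial_{U,V}^{(1)} \otimes 1 + 1 \otimes \partial_{U,V}^{(2)} \]
as maps on the tensor product.

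Finally I would pass from $\cfk_{U,V}$ to $\cfk_n$ via the substitution \eqref{substitute}. For each $z_j$ the two adjacent basepoints $w_{a(j)}$ and $w_{b(j)}$ lie in the same factor as $z_j$, so the substitution is applied factorwise; hence
\[ \cfk_n(\cH) \;\cong\; \cfk_n(\cH_1) \otimes \cfk_n(\cH_2) \]
as $\Q[U_1,\dots,U_{l_1+l_2}]$-modules, with differential $\partial_n \otimes 1 + 1 \otimes \partial_n$, which is the claim. The main technical point is the neck-stretching step, but this is not a genuinely new difficulty: the Ozsv\'{a}th--Szab\'{o} connect-sum argument for $w$-multiplicities applies verbatim to $z$-multiplicities, since for generic $q_i$ no $z_j$ lies on the connect-sum tube, so $V$-weights cannot mix under the degeneration.
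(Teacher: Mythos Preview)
Your connect-sum construction does not produce a valid punctured Heegaard diagram, and this is a genuine gap. When you connect $\Sigma_1$ and $\Sigma_2$ at generic points $q_1,q_2$ lying in $\alpha$-regions that each contain a $w$- and a $z$-basepoint, those two $\alpha$-regions merge into a single component of $\Sigma\setminus\alpha$ containing two $w$'s and two $z$'s, violating the definition of a Heegaard diagram for a link. Correspondingly the curve count is off by one: you end up with $(g_1+g_2)+(k_1+k_2)-1$ $\alpha$-curves, whereas a punctured diagram with $k_1+k_2$ basepoint pairs on a genus $g_1+g_2$ surface needs $(g_1+g_2)+(k_1+k_2)$. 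A further consequence is that your claim ``for each $z_j$ the adjacent $w_{a(j)},w_{b(j)}$ lie in the same factor'' fails for the $z$-basepoints in the merged region: the function $a(\,\cdot\,)$ is no longer even well-defined there, so the substitution \eqref{substitute} cannot be performed factorwise and $\cfk_n$ of your diagram is not defined.

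The paper's argument avoids all of this by taking \emph{both} $\cH_1$ and $\cH_2$ to be punctured and performing the connect sum \emph{at the punctures} $p_1,p_2$, placing the new puncture $p$ on the neck. Then only the two basepoint-free puncture regions merge, so every $\alpha$- and $\beta$-region containing a $z_j$ is untouched and the substitution is manifestly factorwise; the curve count also matches exactly. Moreover, since every homotopy class crossing the neck has $n_p(\phi)>0$, the condition $n_p(\phi)=0$ already forces discs to live on one side, so no neck-stretching or degeneration analysis is needed at all. This is both the fix and a simplification of your approach.
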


\begin{proof}

Let $\cH_{1}$ and $\cH_{2}$ be punctured Heegaard diagrams for $L_{1}$ and $L_{2}$. Define $\cH_{3}$ to be the punctured Heegaard diagram for $L_{1} \sqcup L_{2}$ obtained by taking the connected sum of $\cH_{1}$ and $\cH_{2}$ at the punctures in the respective diagrams. The puncture $p$ in $\cH_{3}$ is placed on the neck of the connected sum.

This puncture blocks any interaction between the two components of the connected sum, and the resulting complex is given by 
\[ \cfk_{n}(\cH_{3}) \cong \cfk_{n}(\cH_{1}) \otimes \cfk_{n}(\cH_{2}) \]

\noindent
The lemma follows.

\end{proof}

The final property that we will prove in this section is the analog of Rasmussen's spectral sequences from HOMFLY-PT homology to $\sln$ homology.

\begin{lem}

For all $n\ge 1$ and any link $L$, there is a spectral sequence from $\hfk(L)$ to $\hfk_{n}(L)$.

\end{lem}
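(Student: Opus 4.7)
The plan is to construct the spectral sequence from a filtration of $\cfk_{n}(\cH)$ by the Alexander grading. The starting observation is that after the substitution $V_{i} = (U_{a(i)}^{n} - U_{b(i)}^{n})/(U_{a(i)} - U_{b(i)})$, the coefficient $V_{i}$---originally of Alexander grading $+1$ in $\cfk_{U,V}(\cH)$---becomes a homogeneous polynomial of degree $n-1$ in the $U$'s, hence of Alexander grading $-(n-1)$. Consequently, for each disc $\phi$ with total $z$-multiplicity $K = \sum_{i} n_{z_{i}}(\phi)$, the corresponding term in $\partial_{n}$ lowers $A$ by exactly $nK$, and the differential decomposes as $\partial_{n} = \sum_{K \geq 0} \partial_{n}^{(K)}$, where $\partial_{n}^{(K)}$ lowers $A$ by $nK$.

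Next, I would define the increasing filtration $\mathcal{F}^{s}\,\cfk_{n}(\cH) = \{x : A(x) \leq s\}$. Since $\partial_{n}$ never raises $A$, each $\mathcal{F}^{s}$ is a subcomplex, and the associated graded $\mathcal{F}^{s}/\mathcal{F}^{s-1}$ carries only the $A$-preserving piece $\partial_{n}^{(0)}$. But $\partial_{n}^{(0)}$ is the part of $\partial_{n}$ in which no $V_{i}$ appears, so it counts precisely the discs with $n_{\mathbf{z}}(\phi) = 0$ weighted only by the $U$'s. This is exactly the defining formula for $\partial^{-}$ on $\cfk^{-}(\cH)$. The $E^{1}$ page of the resulting spectral sequence is therefore $\hfk^{-}(\cH)$, which by the convention of Section \ref{reducedsection} is identified with $\hfk(L)$.

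For convergence, I would use the fact that $\cfk_{n}(\cH)$ is a finitely generated free module over $\Q[U_{1},\ldots,U_{k}]$ together with the observation that each $U_{i}$ carries $\gr_{n}$-degree $2$. This forces every $\gr_{n}$-graded piece of $\cfk_{n}(\cH)$ to be finite-dimensional over $\Q$, with Alexander grading taking only finitely many values in that piece. Since $\partial_{n}^{(K)}$ is homogeneous of degree $n$ with respect to $\gr_{n}$ for every $K$, each page differential $d_{r}$ respects $\gr_{n}$, and in each fixed $\gr_{n}$-grading the filtration $\mathcal{F}^{\bullet}$ is bounded. The standard machinery then yields convergence to the associated graded of $\hfk_{n}(L)$ with respect to the induced filtration.

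The main delicate step is the bookkeeping at the convergence stage, because the filtration is not bounded below on all of $\cfk_{n}(\cH)$---it is bounded only after restriction to a fixed $\gr_{n}$-grading, and one must track that the pages $E^{r}$ split correctly across $\gr_{n}$ so that the standard bounded-filtration argument applies grade-by-grade. Everything else is essentially unpacking the definitions: the substitution $V_{i} \mapsto (U_{a(i)}^{n} - U_{b(i)}^{n})/(U_{a(i)} - U_{b(i)})$ strips out the entire $V$-action from $\partial_{U,V}$ on the $E^{0}$ page, leaving behind the standard knot Floer differential $\partial^{-}$, so that the $E^{1}$ page is $\hfk(L)$ and $E^{\infty}$ computes $\hfk_{n}(L)$.
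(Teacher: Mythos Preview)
Your proposal is correct and rests on the same key observation as the paper: the part of $\partial_{n}$ coming from discs with $n_{\mathbf{z}}(\phi)=0$ is exactly $\partial^{-}$, so the first nontrivial page of the induced spectral sequence is $\hfk(L)$. The difference is in the choice of filtration grading. The paper filters by $\gr_{\alg}=M-2A$, which has the feature that each $U_{i}$ preserves it; since $\cfk_{n}(\cH)$ is free of finite rank over $\Q[U_{1},\dots,U_{k}]$, only finitely many values of $\gr_{\alg}$ occur in the entire complex, and the filtration is globally bounded, so convergence is immediate. You instead filter by the Alexander grading $A$, which is \emph{not} $U$-invariant, so the filtration is unbounded below and you are forced into the extra step of restricting to each $\gr_{n}$-graded piece (where $\cfk_{n}$ is finite-dimensional) to argue convergence. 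That step is valid---with $\gr_{n}$ playing the role of homological degree, the filtration is bounded degree-by-degree---but it is exactly the ``delicate step'' you flag, and the paper's choice of grading sidesteps it entirely. Either filtration organizes the differential by $n_{\mathbf{z}}(\phi)$, so the resulting spectral sequences differ only by a reindexing of the pages; the paper's version just has cleaner bookkeeping.
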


\begin{proof}

Let $\gr_{\alg}$ be the grading $M-2A$. The differential $\partial_{n}$ is not homogeneous with respect to this differential - a homotopy class $\phi$ increases $\gr_{\alg}$ by $1-n_{\bf{z}}(\phi)$. Thus, this grading induces a spectral sequence $(E_{k},d_{k})$ on $\cfk_{n}(L)$ where the differential $d_{k}$ is the piece of the total differential which increases $\gr_{\alg}$ by $1-k$. Since multiplication by the $U_{i}$ preserves $\gr_{\alg}$, the complex is bounded with respect to this grading, so the spectral sequence converges to the total homology $\hfk_{n}(L)$. The differential $d_{0}$ counts holomorphic discs with multiplicity zero at the $\bf{z}$ basepoints, so $(E_{0},d_{0}) = \cfk(L)$, and $E_{1}=\hfk(L)$.

\end{proof}

\section{Generalization to Singular Links}\label{section3} In \cite{OSS}, Ozsv\'{a}th, Stipsicz, and Szab\'{o} define a Floer homology theory for singular links in $S^{3}$. Ozsv\'{a}th and Szab\'{o} modify this construction in \cite{Szabo} to give an oriented cube of resolutions for knot Floer homology. 

\begin{rem}

Ozsv\'{a}th and Szab\'{o} use twisted coefficients in their construction to get regular sequences, but the same construction is defined by Manolescu in \cite{Manolescu} with untwisted coefficients, which is the setting we will be working in.

\end{rem}

The Heegaard diagram for a singular link is obtained by relaxing the conditions on a normal Heegaard diagram. 

\begin{defn}

A \emph{singular Heegaard diagram} is a tuple $(\Sigma, \alpha, \beta, \bf{w}, \bf{z})$ such that $(\Sigma, \alpha, \beta)$ is a Heegaard splitting for $S^{3}$, and each component of $\Sigma  \backslash \alpha$ contains $k$ $\bf{w}$-basepoints and $k$ $\bf{z}$-basepoints, with $k \in \{1,2\}$. Similarly for $\Sigma  \backslash \beta$. Additionally, if $\Sigma  \backslash \alpha$ contains two of each basepoint, then we require the two $\bf{z}$ basepoints to lie in the same region of $\Sigma \backslash (\alpha \cup \beta)$. 

\end{defn}

Such a pair of $\bf{z}$-basepoints is called an \emph{inseparable pair}. The tuple $(\Sigma, \alpha, \beta, \bf{w}, \bf{z})$ determines a singular link in $S^{3}$, with the inseparable pairs corresponding to singular points. If $z_{i}$ and $z_{j}$ are an inseparable pair, we can replace them with a single basepoint $z_{i,j}$ - the two subscripts indicate that it is a double point, so it is connected to two $\bf{w}$ basepoints on each side. Let $w_{a_{1}(i,j)}$ and $w_{a_{2}(i,j)}$ be the two basepoints on the $\alpha$ side, and $w_{b_{1}(i,j)}$ and $w_{b_{2}(i,j)}$ the two basepoints on the $\beta$ side.

Let $\cH$ be a singular Heegaard diagram for a singular link $S$. One can define a chain complex $\cfk^{-}(\cH)$ over in the usual way, defining the differential to be 
\[  \partial^{-}(x) = \sum_{y \in \mathbb{T_{\alpha}} \cap \mathbb{T_{\beta}}} \sum_{\substack{\phi \in \pi_{2}(x,y) \\ \mu(\phi)=1 \\ n_{\bf{z}^{(1)}}(\phi)=0 \\   n_{\bf{z}^{(2)}}(\phi)=0} }   \# \widehat{\mathcal{M}}(\phi) U_{1}^{n_{w_{1}}(\phi)}\cdot \cdot \cdot U_{k}^{n_{w_{k}}(\phi)} y \]

\noindent
where the $\bf{z}^{(1)}$ refers to the normal $z_{i}$ basepoints and $\bf{z}^{(2)}$ refers to the double points. The relative Maslov and Alexander gradings on this complex are given by 
\[ M(x) - M(y) = \mu(\phi) +2n_{\bf{z}^{(2)}}(\phi) - 2n_{\bf{w}}(\phi) \]
\[ A(x) - A(y) = n_{\bf{z}^{(1)}}(\phi)+2n_{\bf{z}^{(2)}}(\phi) - n_{\bf{w}}(\phi) \]

\noindent
The definition of $\cfk^{-}(\cH)$ extends in the obvious way to punctured singular Heegaard diagrams using the same rules as in the non-singular case. Define $\cfk(S)$ to be the chain complex $\cfk^{-}(\cH)$ where $\cH$ is a punctured singular Heegaard diagram for $S$.

The modification to this complex made in \cite{Szabo} is the addition of a Koszul complex. For a singular point $z_{ij}$, let $L(i,j)$ be the linear element $U_{a_{1}(i,j)}+U_{a_{2}(i,j)}-U_{b_{1}(i,j)}-U_{b_{2}(i,j)}$. The Koszul complex is given by

\[ K(S) = \bigotimes_{z_{ij} \in \bf{z}^{(2)}} R \xrightarrow{ \hspace{5mm} L(i,j) \hspace{5mm}} R \]

\noindent
where $R=\Q[U_{1},...,U_{k}]$.

\begin{defn} The total complex $C_{F}(S)$ is the tensor product $\cfk(S) \otimes_{R} K(S) $. Note that when $S$ has no singular points, $C_{F}(S) = \cfk(S)$.

\end{defn}

\begin{thm}[\hspace{1sp}\cite{Manolescu},\cite{Szabo}] \label{exacttriangle}

Let $D_{+}$, $D_{x}$, and $D_{s}$ be three diagrams which differ at a single crossing $c$, with $D_{+}$ having a positive crossing at $c$, $D_{x}$ having a singular point at $c$, and $D_{s}$ having the oriented smoothing at $c$. Then there are exact triangles 

\begin{figure}[H]
\centering
\begin{tikzpicture}
\node(x){$C_{F}(D_{x})$};
\node(x') at ([shift={(14,0)}]x){$C_{F}(D_{x})$};
\node (s) at ([shift={(5,0)}]x) {$C_{F}(D_{s})$};
\node (s') at ([shift={(9,0)}]x) {$C_{F}(D_{s})$};
\node (+) at ([shift={(2.7,-2.3)}]x) {$C_{F}(D_{+})$};
\node (-) at ([shift={(11.7,-2.3)}]x) {$C_{F}(D_{-})$};
\path[-stealth] 
  (x) edge (s)
  (s') edge (x')
  (x') edge (-)
  (-) edge (s')
  (s) edge (+)
  (+) edge (x);
\end{tikzpicture}
\end{figure}

\end{thm}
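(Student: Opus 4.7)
The strategy is to realize all four complexes $C_F(D_+)$, $C_F(D_-)$, $C_F(D_s)$ and $C_F(D_x)$ from (punctured) singular Heegaard diagrams that agree outside a small neighborhood of the crossing $c$, so that the local combinatorics at $c$ drive the comparison. I would follow the mapping-cone approach of Manolescu \cite{Manolescu} and Ozsv\'{a}th-Szab\'{o} \cite{Szabo}, working with a braid-like presentation in which each crossing is supported in a standard region of the Heegaard surface. The four local patches share the same $\alpha$ and $\beta$ arcs outside a small disk $U_c$ containing the crossing, and inside $U_c$ they differ in a prescribed way: the singular patch has two $z$-basepoints $z_i, z_j$ forming an inseparable pair with associated Koszul factor $R \xrightarrow{L(i,j)} R$, while the smoothing patch has a single basepoint and no Koszul factor.

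Writing each complex in a common basis of intersection points (those outside $U_c$ together with local generators inside $U_c$), I would next produce explicit chain maps between the local complexes. The central claim is a quasi-isomorphism
\[ C_F(D_+) \;\simeq\; \mathrm{Cone}\bigl( C_F(D_x) \xrightarrow{\;f\;} C_F(D_s) \bigr), \]
where $f$ is built from a triangle count across the small handleslide replacing the singular patch by the smoothing patch. For the negative crossing the analogous identification reverses the direction of the mapping cone. Granting either identification, the long exact sequence of a mapping cone yields the corresponding exact triangle in the theorem. The grading shifts are pinned down by the local grading data at $c$ together with the shift conventions built into $C_F(D_x) = \cfk(D_x) \otimes_R K(D_x)$.

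The main obstacle, and the bulk of the work, is verifying the mapping-cone identification. Two things must be checked: first, that the Koszul differential $L(i,j) = U_{a_1(i,j)} + U_{a_2(i,j)} - U_{b_1(i,j)} - U_{b_2(i,j)}$ attached to the singular point in $C_F(D_x)$ exactly matches the portion of the crossing differential in $C_F(D_+)$ coming from the $U_c$-region, so that $\cfk$ counts combined with the Koszul differential reproduce the crossing complex; and second, that a neck-stretching degeneration turns the local comparison into a global quasi-isomorphism of complexes over the whole Heegaard surface. These are precisely the inputs of \cite{Manolescu} (where the triangle map $f$ is constructed and its compatibility with the $U$-variables is checked) and \cite{Szabo} (where the Koszul complex is shown to encode exactly the required linear combination of $U$-variables at an inseparable pair). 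I would import their local models verbatim and verify that the arguments are unaffected by passing from an unpunctured to a punctured singular Heegaard diagram, since the puncture $p$ can be placed far from $c$ and contributes no discs into $U_c$.
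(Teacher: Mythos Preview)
The paper does not prove this theorem; it is stated with attribution to \cite{Manolescu} and \cite{Szabo} and used as a black box, so there is no proof in the paper to compare your proposal against. Your sketch is a reasonable outline of the mapping-cone argument in those references, and your observation that placing the puncture $p$ away from the crossing region makes the punctured case go through unchanged is the only adaptation needed here.
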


\noindent
Iterating this exact triangle gives an oriented cube of resolutions for any link $L$.

We will now go about constructing our complex for singular links in $S^{3}$, which will be obtained by adding differentials to both $\cfk(S)$ and $K(S)$. Let $Q(i,j)$ be the quadratic element $U_{a_{1}(i,j)}U_{a_{2}(i,j)}-U_{b_{1}(i,j)}U_{b_{2}(i,j)}$. In \cite{KR}, Khovanov and Rozansky define polynomials $p_{1}(i,j,n), p_{2}(i,j,n)$ in $R$ of degree $n-1$ and $n-2$, respectively, such that 
\[ L(i,j)p_{1}(i,j,n) + Q(i,j)p_{2}(i,j,n) = U^{n}_{a_{1}(i,j)}+U^{n}_{a_{2}(i,j)}-U^{n}_{b_{1}(i,j)}-U^{n}_{b_{2}(i,j)} \]

\noindent
We define the ground ring $\Q[\{U_{i}\} \cup \{V_{i} \} \cup \{W_{ij}\}]$ where the $U_{i}$ correspond to the $w_{i}$, the $V_{i}$ correspond to the $z_{i}$, and the $W_{ij}$ correspond to the double points $z_{ij}$. The master complex $\cfk_{U,V,W}(S)$ is defined over this ring with differential $\partial_{U,V,W}$, given by

\[  \partial_{U,V,W}(x) = \sum_{y \in \mathbb{T_{\alpha}} \cap \mathbb{T_{\beta}}} \sum_{\substack{\phi \in \pi_{2}(x,y) \\ \mu(\phi)=1 \\ n_{p}(\phi)=0}}    \# \widehat{\mathcal{M}}(\phi) U^{n_{\bf{w}}(\phi)}V^{n_{\bf{z(1)}}(\phi)} W^{n_{\bf{z}(2)}(\phi)} y \]

To obtain the complex $\cfk_{n}(S)$, we quotient this complex by the following relations in the ground ring:
\[ V_{i} = (U_{a(i)}^{n} - U_{b(i)}^{n})/(U_{a(i)}-U_{b(i)}) \]
\[ W_{ij} = p_{2}(i,j,n) \]

\noindent
Note that the resulting ground ring is isomorphic to the ring $R=\Q[U_{1},...,U_{k}]$. We will also add differentials to the Koszul complex $K(S)$ to make it a matrix factorization $K_{n}(S)$:

\[K_{n}(S) = \bigotimes_{z_{ij} \in \bf{z}^{(2)}} R \mathrel{ \substack{\xrightarrow{\hspace{5mm}L(i,j) \hspace{5mm}} \\[-.4ex] \xleftarrow[\hspace{3.5mm}p_{1}(i,j,n) \hspace{3.5mm}]{}}} R \] 

\begin{defn}

The complex $C_{F(n)}(S)$ is defined to be $\cfk_{n}(S) \otimes K_{n}(S)$. The differential on this complex is denoted $\partial_{n}$.

\end{defn}

\begin{lem}

The differential $\partial_{n}$ satisfies $\partial_{n}^{2}=0$.

\end{lem}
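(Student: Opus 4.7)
The plan is to exhibit $\cfk_{n}(S)$ as a curved complex whose curvature exactly cancels the potential of the matrix factorization $K_{n}(S)$, so that their tensor product $C_{F(n)}(S)$ has vanishing curvature and hence $\partial_n^2=0$.

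First I would compute the curvature of the master complex $\cfk_{U,V,W}(S)$ by adapting the degeneration argument of Lemma 2.2 to the singular setting. Each component of $\Sigma\setminus\alpha$ gives a Maslov-index-$2$ degeneration counted with positive sign, and each component of $\Sigma\setminus\beta$ one counted with negative sign. A region containing a regular basepoint $z_i$ contributes $\pm U_{a(i)}V_i$ or $\mp U_{b(i)}V_i$ as before, while a region containing an inseparable pair (merged into a single basepoint $z_{ij}$) contains exactly two $w$-basepoints together with the double point and so contributes $U_{a_1(i,j)}U_{a_2(i,j)}W_{ij}$ on the $\alpha$-side and $-U_{b_1(i,j)}U_{b_2(i,j)}W_{ij}$ on the $\beta$-side. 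Summing,
\[
\partial_{U,V,W}^{2} \;=\; \sum_{i}\bigl(U_{a(i)}-U_{b(i)}\bigr)V_i \;+\; \sum_{ij} Q(i,j)\,W_{ij}.
\]

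Next I would substitute the defining relations $V_i = (U_{a(i)}^{n}-U_{b(i)}^{n})/(U_{a(i)}-U_{b(i)})$ and $W_{ij}=p_2(i,j,n)$ and use the Khovanov--Rozansky identity to rewrite
\[
Q(i,j)\,p_2(i,j,n) \;=\; U_{a_1(i,j)}^{n}+U_{a_2(i,j)}^{n}-U_{b_1(i,j)}^{n}-U_{b_2(i,j)}^{n} \;-\; L(i,j)\,p_1(i,j,n).
\]
Combining with the regular contributions $U_{a(i)}^{n}-U_{b(i)}^{n}$, the resulting $U_k^{n}$ terms telescope to zero: since each $w_k$ lies in exactly one $\alpha$-region and exactly one $\beta$-region, the multi-sets $\{a(i)\}\cup\{a_1(i,j),a_2(i,j)\}$ and $\{b(i)\}\cup\{b_1(i,j),b_2(i,j)\}$ each equal $\{1,\dots,k\}$. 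What remains is
\[
\bigl(\partial_n^{\cfk}\bigr)^{2} \;=\; -\sum_{ij} L(i,j)\,p_1(i,j,n).
\]

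Finally, each local factor $K_n(i,j)$ of $K_n(S)$ has composite differential equal to multiplication by $L(i,j)\,p_1(i,j,n)$, so $K_n(S)=\bigotimes_{ij}K_n(i,j)$ is a matrix factorization with potential $\sum_{ij} L(i,j)\,p_1(i,j,n)$. With the standard Koszul sign convention on $C_{F(n)}(S)=\cfk_n(S)\otimes K_n(S)$, the cross terms $(\partial_n^{\cfk}\otimes 1)(1\otimes d^{K_n})+(1\otimes d^{K_n})(\partial_n^{\cfk}\otimes 1)$ cancel, so $\partial_n^{2}$ reduces to the sum of the two curvatures, which is zero. I expect the main delicacy to lie in the first step: one must verify that after merging an inseparable pair into the single basepoint $z_{ij}$, the degenerate disc supported on the relevant $\alpha$- or $\beta$-region really contributes the linear power $W_{ij}$ (rather than $W_{ij}^{2}$), and that the signs are compatible with a system of orientations in which $\alpha$-degenerations are counted positively.
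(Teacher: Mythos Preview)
Your proposal is correct and follows essentially the same approach as the paper: compute the degeneration curvature of the master complex, substitute the $V_i$ and $W_{ij}$ relations, invoke the Khovanov--Rozansky identity $L(i,j)p_1+Q(i,j)p_2=\sum U^n$, and telescope the $U_k^n$ terms using that each $w_k$ appears once on the $\alpha$-side and once on the $\beta$-side. The only cosmetic difference is organizational---the paper sums the $\cfk_n$ and $K_n$ contributions per basepoint before telescoping, whereas you telescope inside the $\cfk_n$ curvature first and then cancel the residual $-\sum L(i,j)p_1(i,j,n)$ against the potential of $K_n(S)$; the content is the same.
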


\begin{proof}

We will start by computing $\partial_{n}^{2}$ on the $\cfk_{n}(S)$ component. Each $\alpha$- or $\beta$-degeneration contains a unique $z_{i}$ or $z_{ij}$ basepoint or the puncture $p$, so it suffices to sum the degenerations over the $z_{i}$ and $z_{ij}$.

For $z_{i} \in \bf{z}^{(1)}$, the $\alpha$-degeneration containing $z_{i}$ has contribution $U_{a(i)}V_{i}$ and the $\beta$-degeneration has contribution $-U_{b(i)}V_{i}$, so the total contribution to $\partial_{n}^{2}$ is $(U_{a(i)}-U_{b(i)})V_{i}=U_{a(i)}^{n} - U_{b(i)}^{n}$.

For $z_{ij} \in \bf{z}^{(2)}$, the $\alpha$-degeneration containing $z_{ij}$ has contribution $U_{a_{1}(i,j)}U_{a_{2}(i,j)}W_{ij}$ and the $\beta$-degeneration has contribution $-U_{b_{1}(i,j)}U_{b_{2}(i,j)}W_{ij}$, so the total contribution to $\partial_{n}^{2}$ is $Q(i,j)W_{ij}=Q(i,j)p_{2}(i,j,n)$.

On the $K_{n}(S)$ side, each $z_{ij}$ in $\bf{z}^{(2)}$ contributes $L(i,j)p_{1}(i,j,n)$ to $\partial_{n}^{2}$. 

Thus, for each $z_{ij} \in \bf{z}^{(2)}$, the total contribution to $\partial_{n}^{2}$ in $\cfk_{n}(S) \otimes K_{n}(S)$ is $L(i,j)p_{1}(i,j,n) + Q(i,j)p_{2}(i,j,n)$, or $U^{n}_{a_{1}(i,j)}+U^{n}_{a_{2}(i,j)}-U^{n}_{b_{1}(i,j)}-U^{n}_{b_{2}(i,j)}$. Adding up the contributions from both the $z_{i}$ and $z_{ij}$ basepoints, we see that each $U_{i}^{n}$ appears twice, once with positive sign and once with negative sign. The lemma follows.

\end{proof}

Thus, $C_{F(n)}(S)$ is a well-defined chain complex - we denote its homology by $H_{F(n)}(S)$. As with non-singular links, the complex is not homogeneous with respect to the Maslov and Alexander grading, but it is homogeneous of degree $n$ with respect to the grading $\gr_{n}=nM-2(n-1)A$.

\begin{defn}

A diagram for a singular knot is \emph{completely singular} if it contains no crossings.

\end{defn}

For completely singular links, $H_{F(n)}(S)$ agrees with our definition in \cite{Me2}, where we study a homology theory defined only for completely singular links.

\begin{lem}[\hspace{1sp}\cite{Me2}] \label{isolemma}

For a completely singular link $S$, the homology $H_{F(n)}(S)$ is isomorphic to the $\mathfrak{sl}_{n}$ homology $H_{n}(S)$ as graded vector spaces.

\end{lem}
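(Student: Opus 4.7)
The plan is to set up a singular Heegaard diagram for $S$ in which the local structure at each singular point is transparent, then identify the resulting chain complex $C_{F(n)}(S)$ with the Khovanov--Rozansky matrix factorization model that computes $H_n(S)$.

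First I would choose a specifically adapted singular Heegaard diagram $\cH$: since $S$ is completely singular, a planar diagram consists only of trivalent (wide edge) vertices joined by simple arcs, and for each wide edge I place a local ``vertex handle'' on $\Sigma$ so that the inseparable pair $z_{ij}$ and the four neighboring basepoints $w_{a_1(i,j)}, w_{a_2(i,j)}, w_{b_1(i,j)}, w_{b_2(i,j)}$ sit in a standard configuration. Each remaining arc carries the usual bigon model with one $w$- and one $z$-basepoint. For such $\cH$ the holomorphic disc counts are local, the generators of $\mathbb{T}_\alpha \cap \mathbb{T}_\beta$ are in bijection with a natural set of MOY-like vertex states, and $C_{F(n)}(S)$ factors as a tensor product of local pieces, one per edge and one per vertex.

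Next I would identify each local piece with the corresponding Khovanov--Rozansky factor. An edge piece contributes a factor with $V_i$ replaced by $(U_{a(i)}^n - U_{b(i)}^n)/(U_{a(i)} - U_{b(i)})$, which is the standard edge polynomial in the KR model. A vertex piece, after substituting $W_{ij} = p_2(i,j,n)$ and tensoring with the two-term Koszul factorization on the $(i,j)$ variables with maps $L(i,j)$ and $p_1(i,j,n)$, is exactly the length-two matrix factorization with potential
\[ L(i,j)p_1(i,j,n) + Q(i,j)p_2(i,j,n) = U_{a_1(i,j)}^n + U_{a_2(i,j)}^n - U_{b_1(i,j)}^n - U_{b_2(i,j)}^n, \]
which is the Khovanov--Rozansky matrix factorization assigned to a wide edge. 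Gluing these along strands, with the puncture $p$ providing the MOY closing mark via the quotient $U_0 = V_0 = 0$, reconstructs the global $\sln$ complex of $S$, and taking homology of the matched complexes gives the graded isomorphism.

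The main obstacle is the local-to-global verification: one must confirm that on the chosen Heegaard diagram no extra holomorphic discs appear beyond those predicted by the MOY state sum, that the signs from a compatible system of orientations agree with the KR sign conventions, and that the single grading $\gr_{n}$ matches the linear combination $\mathbf{gr}_n + \tfrac{n}{2}\mathbf{gr}_v$ on the $\sln$ side. This is precisely the identification carried out in \cite{Me2}, so the proof here reduces to citing that work once the two complexes are exhibited as the same tensor product of local factors.
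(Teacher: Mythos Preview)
The paper does not prove this lemma; it is quoted verbatim from \cite{Me2} and no argument is given beyond the citation. Your proposal is a plausible outline of how such a proof would go---set up a planar Heegaard diagram, factor $C_{F(n)}(S)$ into local pieces at edges and vertices, and match each piece with the corresponding Khovanov--Rozansky matrix factorization---and you yourself defer the actual verification to \cite{Me2} at the end. So in effect both the paper and your proposal reduce to the same citation; there is nothing to compare beyond noting that your sketch correctly anticipates the shape of the argument in that reference.
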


\begin{conj} \label{conj3.9}

Let $D_{+}$, $D_{x}$, and $D_{s}$ be three diagrams which differ at a single crossing $c$, with $D_{+}$ having a positive crossing at $c$, $D_{x}$ having a singular point at $c$, and $D_{s}$ having the oriented smoothing at $c$. Then there is an exact triangle 

\begin{figure}[H]
\centering
\begin{tikzpicture}
\node(x){$C_{F(n)}(D_{x})$};
\node(x') at ([shift={(14,0)}]x){$C_{F(n)}(D_{x})$};
\node (s) at ([shift={(5,0)}]x) {$C_{F(n)}(D_{s})$};
\node (+) at ([shift={(2.7,-2.3)}]x) {$C_{F(n)}(D_{+})$};
\node (s') at ([shift={(9,0)}]x) {$C_{F(n)}(D_{s})$};
\node (-) at ([shift={(11.7,-2.3)}]x) {$C_{F(n)}(D_{-})$};
\path[-stealth] 
  (x) edge (s)
  (s) edge (+)
  (s') edge (x')
  (x') edge (-)
  (-) edge (s')
  (+) edge (x);
\end{tikzpicture}
\end{figure}

\noindent
and if $D_{x}, D_{s}$ are completely singular, then the map between them on homology is the $\mathfrak{sl}_{n}$ edge map. 

\end{conj}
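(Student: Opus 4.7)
The plan is to lift the Manolescu-Szab\'{o} exact triangles of Theorem \ref{exacttriangle} from the undeformed complex $C_F$ to the deformed complex $C_{F(n)} = \cfk_n(S) \otimes K_n(S)$. I would (a) enhance the triangle maps of that theorem so that they act on the master complex $\cfk_{U,V,W}$, (b) check that they descend through the substitutions $V_i = (U_{a(i)}^n - U_{b(i)}^n)/(U_{a(i)}-U_{b(i)})$ and $W_{ij} = p_2(i,j,n)$ defining $\cfk_n(S)$, and then (c) extend them over the Koszul / matrix-factorization tensor factor $K_n(S)$ and verify exactness.

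For (a) and (b), the maps in Theorem \ref{exacttriangle} are defined by counts of holomorphic triangles in Heegaard triple-diagrams interpolating between $D_+$, $D_x$, and $D_s$. I would rerun those triangle counts while recording the $\bf{z}^{(1)}$ and $\bf{z}^{(2)}$ multiplicities, upgrading each map to a morphism of curved complexes over $\Q[\{U_i\}\cup\{V_i\}\cup\{W_{ij}\}]$. Substituting $V_i$ and $W_{ij}$ as above should then produce chain maps on $\cfk_n$, with the new curvature terms canceling by the Khovanov--Rozansky identity $L(i,j) p_1(i,j,n) + Q(i,j) p_2(i,j,n) = U^n_{a_1(i,j)}+U^n_{a_2(i,j)}-U^n_{b_1(i,j)}-U^n_{b_2(i,j)}$ already exploited in the proof that $\partial_n^2 = 0$. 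Tensoring with $K_n(S)$ while keeping track of both the $L(i,j)$ and $p_1(i,j,n)$ arrows then yields candidate chain maps between the $C_{F(n)}$ complexes.

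For (c), rather than constructing all three maps of each triangle separately, I would identify $C_{F(n)}(D_+)$ with a mapping cone of the composite $C_{F(n)}(D_x) \to C_{F(n)}(D_s)$, up to a local correction on the $K_n$ factor (accounting for the fact that $D_x$ carries one more double point, hence one more Koszul arrow, than $D_s$), and similarly for $C_{F(n)}(D_-)$. Exactness is then automatic from the standard rotation of a mapping cone. To recognise $C_{F(n)}(D_x) \to C_{F(n)}(D_s)$ as the $\sln$ edge map when $D_x$ and $D_s$ are completely singular, one may localise: by Lemma \ref{isolemma} both complexes model $\sln$ homology at the relevant vertex, and the triangle map is supported near the crossing being resolved. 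Matching it with the edge map (a multiplication or comultiplication in the appropriate Jacobi ring) then reduces to a direct computation on the two-strand local model.

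The main obstacle, and the reason this is only a conjecture rather than a theorem in the paper, is step (a). The Manolescu-Szab\'{o} triangle maps are \emph{a priori} defined only after setting $V_i = V_j$ on the same link component, so it is not at all clear that they can be decorated consistently with the full master-complex variables $V_i$ and $W_{ij}$. Carrying out this enhancement requires a careful reanalysis of the triangle moduli spaces and of which regions of the Heegaard triple each holomorphic triangle passes through; any obstruction to doing this simultaneously at the three vertices of the triangle would block the argument. Once step (a) is established, steps (b) and (c) should be algebraic manipulations of the same flavor as the invariance proof of Theorem \ref{inv} and the verification that $\partial_n^2 = 0$.
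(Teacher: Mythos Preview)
The statement you are addressing is Conjecture~\ref{conj3.9}: the paper does not prove it, so there is no proof to compare your proposal against. You correctly recognise this and present your write-up as a strategy with an explicitly acknowledged gap at step~(a), which is the honest thing to do.

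That said, your diagnosis of the obstruction is slightly off from the one the paper gives. You locate the difficulty in lifting the Manolescu--Szab\'{o} triangle maps to the full master complex $\cfk_{U,V,W}$ before any identifications are made. The paper's stated obstruction is more specific: the $\mathbf{z}$ basepoints \emph{play different roles in the three diagrams} $D_{+}$, $D_{x}$, $D_{s}$. Concretely, at the crossing $c$ the diagram $D_{x}$ has a double point $z_{ij}$ (with associated variable $W_{ij}$ and substitution $W_{ij}=p_{2}(i,j,n)$), whereas $D_{s}$ has ordinary basepoints (with variables $V_{i}$ and substitutions $V_{i}=(U_{a(i)}^{n}-U_{b(i)}^{n})/(U_{a(i)}-U_{b(i)})$), and $D_{+}$ has yet another local configuration. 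So even if you succeed in decorating the triangle maps with all multiplicities, the quotients defining $\cfk_{n}$ at the three vertices are genuinely different rings, and there is no single substitution you can perform uniformly on a triangle map to land in all three simultaneously. This is exactly the issue that makes your step~(b) nontrivial, not just step~(a); your phrase ``should then produce chain maps on $\cfk_{n}$'' hides the real work. Any actual proof would need to grapple with this mismatch directly, perhaps by working in a larger ring that surjects onto all three or by constructing the maps intrinsically on $C_{F(n)}$ rather than by descent from the master complex.
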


The existence of the conjectured spectral sequences from $H_{n}(K)$ to $\hfk_{n}(K)$ would follow immediately from this conjecture. Unfortunately, the fact that the $\bf{z}$ basepoints play different roles in the three diagrams prevents the same argument from Theorem \ref{exacttriangle} from working.

\newpage

\section{Khovanov-Rozansky Homology} In this section we will give a short background on the properties of Khovanov-Rozansky homology. A definition of the complexes can be found in \cite{KR}, \cite{KR2}, and \cite{Rasmussen}.

\subsection{Knot Polynomials}

 The Khovanov-Rozansky package includes HOMFLY-PT homology and $\mathfrak{sl}_{n}$ homology, which are categorifications of the HOMFLY-PT polynomial and $\mathfrak{sl}_{n}$ polynomial, respectively. The HOMFLY-PT polynomial  of a link $L$ in $S^{3}$ is a two variable polynomial $P(a,q)(L)$, and it is determined by the following skein relation:
\[ aP(a,q)(D_{+}) -a^{-1}P(a,q)(D_{-})= (q-q^{-1})P(a,q)(D_{s}) \]

\noindent
together with the normalization $P(a,q, \unknot)=\frac{a-a^{-1}}{q-q^{-1}}$ (\hspace{1sp}\cite{HOMFLY}, \cite{PT}). The polynomial $P_{n}$ is given by the specialization $a=q^{n}$:
\[ P_{n}(q,L) = P(q^{n},q,L) \]

\noindent
There are also reduced polynomials $\overline{P}(a,q,L)$ and $\overline{P}_{n}(q,L)$ obtained from the same skein relation, but with the normalization $\overline{P}(a,q,\unknot) = \overline{P}_{n}(q,\unknot)=1$. For $n \ge 1$, $P_{n}(q,L)$ and $\overline{P}_{n}(q,L)$ are the unreduced and reduced $\sln$ polynomials, respectively. $\overline{P}_{0}(q,L)$ is the Alexander polynomial of $L$.


%
%

\subsection{HOMFLY-PT Homology} Let $L$ be a link in $S^{3}$, and let $D$ be a connected, decorated braid diagram for $L$. The diagram $D$ can be viewed as an oriented graph in which the crossings are 4-valent vertices and the marked point (the decoration) is a bivalent vertex. Let $e_{1},...,e_{k}$ be the edges of $D$, and let $R_{0}=\Q[U_{1},...,U_{k}]$. Each 4-valent vertex $v$ in $D$ has two incoming edges, which we will denote $e_{b_{1}(v)}$ and $e_{b_{2}(v)}$, and two outgoing edges $e_{a_{1}(v)}$ and $e_{a_{2}(v)}$. Define the linear and quadratic terms $L(v)$ and $Q(v)$ by 
\[L(v)=U_{a_{1}(v)}+U_{a_{2}(v)}-U_{b_{1}(v)}-U_{b_{2}(v)} \hspace{10mm} Q(v)= U_{a_{1}(v)}U_{a_{2}(v)}-U_{b_{1}(v)}U_{b_{2}(v)} \]

\begin{defn}

The ground ring $R$ is defined to be the quotient \[R_{0}/\{L(v)=0 \text{ for } v \in V_{4}(D) \} \]

\noindent
where $V_{4}(D)$ is the set of 4-valent vertices in $D$. 

\end{defn}

The HOMFLY-PT complex is defined by writing down a complex for each crossing in $D$, then taking the tensor product over all of these crossing complexes. The complex is triply graded, with gradings $(\mathbf{gr}_{q}, \mathbf{gr}_{h},\mathbf{gr}_{v})$. We use the bold fonts to distinguish the Khovanov-Rozansky gradings from the knot Floer gradings. Each $U_{i}$ has grading $(2,0,0)$.

Let $v$ be a positive crossing in $D$. The complex $C(v)$ is given by  

\begin{figure}[!h]
\centering
\begin{tikzpicture}
  \matrix (m) [matrix of math nodes,row sep=5.5em,column sep=6.5em,minimum width=2em] {
     R\{2,-2, -2\} & R\{0,0,-2\} \\
     R\{0, -2, 0\} & R\{0,0,0\} \\};
  \path[-stealth]
    (m-1-1) edge node [left] {$U_{a_{1}(v)}-U_{b_{2}(v)}$} (m-2-1)
    (m-1-1) edge node [above] {$Q(v)$} (m-1-2)
    (m-1-2) edge node [right] {$1\hspace{25mm}$} (m-2-2)
    (m-2-1) edge node [above] {$U_{a_{1}(v)}-U_{b_{1}(v)}$} (m-2-2);
\end{tikzpicture}
\end{figure}

\newpage
If $v$ is a negative crossing, the complex $C(v)$ is given by 

\begin{figure}[!h]
\centering
\begin{tikzpicture}
  \matrix (m) [matrix of math nodes,row sep=5.5em,column sep=6.5em,minimum width=2em] {     
     R\{0, -2, 0\} & R\{0,0,0\} \\
     R\{0,-2, 2\} & R\{-2,0, 2\} \\};
  \path[-stealth]
    (m-1-1) edge node [left] {$\hspace{20mm}1$} (m-2-1)
    (m-1-1) edge node [above] {$U_{a_{1}(v)}-U_{b_{1}(v)}$} (m-1-2)
    (m-1-2) edge node [right] {$U_{a_{1}(v)}-U_{b_{2}(v)}$} (m-2-2)
    (m-2-1) edge node [above] {$Q(v)$} (m-2-2);
\end{tikzpicture}
\end{figure}

Both of these complex come with a vertical filtration - let $d_{+}$ denote the differentials which preserve this filtration and $d_{v}$ the differentials which decrease it. This filtration corresponds to the oriented cube of resolutions, where 
\[ R \xrightarrow{ \hspace{8mm} Q(v) \hspace{8mm} }R \]

\noindent
is the complex for the singularization, and 
\[ R \xrightarrow{ \hspace{2mm} U_{a_{1}(v)}-U_{b_{1}(v)}\hspace{2mm} }R \]

\noindent
is the complex for the oriented smoothing. For this reason, we will also refer to the vertical filtration as the \emph{cube} filtration. Viewing these complexes from the cube of resolutions perspective, $d_{+}$ is the vertex maps, while $d_{v}$ is the edge maps. Note that $d_{+}$ is homogeneous of degree $(2, 2, 0)$ and $d_{v}$ is homogeneous of degree $(0, 0, 2)$.

\begin{defn}

The \emph{middle} HOMFLY-PT complex is given by 

\[ C^{M}(D) = \Big[ \bigotimes_{v \in V_{4}(D)}  C(v) \Big]  \{-w + b, w + b - 1, w - b + 1\} \]

\noindent
where $b$ and $w$ are the braid index and the writhe of $D$, respectively. The middle HOMFLY-PT homology is $ H^{M}(L) = H_{*}(H_{*}(C^{M}(D), d_{+}), d_{v}^{*}) $.

\end{defn}

We will be more interested in two variants of this, the unreduced and reduced theories.

\begin{defn} The \emph{reduced} HOMFLY-PT complex is given by
\[ \overline{C}(D) = C^{M}(D) \otimes \big[ R\{1, 0,-2\} \xrightarrow{ \hspace{4mm} U_{1} \hspace{3mm}} R\{-1,0,0\} \big] \]

\noindent
The reduced HOMFLY-PT homology is $\overline{H}(L) = H_{*}(H_{*}(\overline{C}(D), d_{+}), d_{v}^{*}) $. Note that the $U_{1}$ differential above is part of $d_{v}$.

\end{defn}

\begin{defn} The \emph{unreduced} HOMFLY-PT complex is given by
\[ C(D) = C^{M}(D) \otimes \big[ R\{0,-1,-1\} \oplus R\{0,1,-1\} \big] \]

\noindent
The unreduced HOMFLY-PT homology is $H(L) = H_{*}(H_{*}(C(D), d_{+}), d_{v}^{*}) $.
\end{defn}

\begin{lem}
The graded Euler characteristic of reduced HOMFLY-PT homology is the reduced HOMFLY-PT polynomial, and similarly for the unreduced versions.
\[ \sum_{i,j,k} (-1)^{(k-j)/2} a^{j} q^{i} \dim(\overline{H}^{i,j,k}(L)) = \overline{P}(a,q,L) \]

\[ \sum_{i,j,k} (-1)^{(k-j)/2} a^{j} q^{i} \dim(H^{i,j,k}(L)) = P(a,q,L) \]

\end{lem}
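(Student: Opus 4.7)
The strategy is to verify that the Euler characteristic in the lemma satisfies the two defining properties of the HOMFLY-PT polynomial---the skein relation $aP(a,q,D_+) - a^{-1}P(a,q,D_-) = (q-q^{-1})P(a,q,D_s)$ together with its value on the unknot---and to invoke the uniqueness of the polynomial with these properties. This is the classical strategy going back to Khovanov--Rozansky, and the only work is to port it to the specific normalization conventions fixed in this section.

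First I would check the normalization. Represent $\unknot$ by the trivial one-strand braid with a single bivalent vertex; then $R=\Q[U_1]$, while $b=1$ and $w=0$ give a shift of $\{1,0,0\}$, so $C^M(\unknot)=R\{1,0,0\}$ with no differential. Tensoring by $R\{0,-1,-1\}\oplus R\{0,1,-1\}$ yields $C(\unknot)=R\{1,-1,-1\}\oplus R\{1,1,-1\}$, still with trivial differential, so $H(\unknot)=C(\unknot)$. Summing the weighted graded dimensions gives
\[
\chi=\frac{qa^{-1}}{1-q^{2}}-\frac{qa}{1-q^{2}}=\frac{q(a^{-1}-a)}{1-q^{2}}=\frac{a-a^{-1}}{q-q^{-1}}=P(a,q,\unknot).
\]
For the reduced theory, tensoring instead by $R\{1,0,-2\}\xrightarrow{U_1}R\{-1,0,0\}$ and taking $d_v$-homology kills $U_1$, leaving $\overline{H}(\unknot)\cong\Q$ in tri-grading $(0,0,0)$; its Euler characteristic is $1=\overline{P}(a,q,\unknot)$.

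To obtain the skein relation, the key structural observation is that each crossing complex $C(v)$ is itself a two-dimensional cube whose horizontal filtration $d_+$ collapses to the two $d_+$-resolutions of the crossing, namely the oriented smoothing and the singularization. For a positive crossing these are the bottom and top rows of the complex exhibited in the previous subsection; for a negative crossing they are swapped. Consequently $C(D_+)$ is the mapping cone of a map $C(D_x)\to C(D_s)$ carrying a specific grading shift $s_+$, while $C(D_-)$ is the cone of $C(D_s)\to C(D_x)$ with a different shift $s_-$. Both shifts must also absorb the change in the global writhe-based normalization $\{-w+b,\,w+b-1,\,w-b+1\}$ as we pass from $D_+$ to $D_s$ to $D_-$. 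These two cone structures give rise to two distinguished triangles in the $(d_+,d_v)$-iterated homology relating $H(D_+)$, $H(D_-)$, $H(D_s)$, and $H(D_x)$; taking Euler characteristics produces two linear identities in $\chi(H(D_+)),\chi(H(D_-)),\chi(H(D_s)),\chi(H(D_x))$, and eliminating $\chi(H(D_x))$ between them yields the HOMFLY-PT skein relation. The factors $(q-q^{-1})$ and $a^{\pm1}$ arise precisely from the shifts $s_\pm$ combined with the sign convention $(-1)^{(k-j)/2}$.

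The hard part will be the bookkeeping in the second step: tracking $s_\pm$ through the two cones, incorporating the writhe-dependent global shifts (whose values differ among $D_+$, $D_-$, and $D_s$), and checking that all signs and powers of $a,q$ combine to give exactly the classical skein coefficients rather than some variant normalization. A parallel direct verification at the chain level, using the explicit four-term positive- and negative-crossing complexes displayed in this section, yields the same combinatorial content and can be used as a cross-check.
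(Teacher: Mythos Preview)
The paper does not supply a proof of this lemma: it appears in a section explicitly devoted to recalling background on Khovanov--Rozansky homology, with the constructions attributed to \cite{KR}, \cite{KR2}, and \cite{Rasmussen}. So there is no in-paper argument to compare against.

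Your proposed approach is the standard one from the literature and is correct in outline. The unknot computations are accurate under the paper's conventions. For the skein relation, your reading of each crossing complex as a two-step $d_v$-filtration with associated graded pieces the singularization and smoothing complexes is exactly right, and passing to Euler characteristics then yields two linear relations among $\chi(D_+)$, $\chi(D_-)$, $\chi(D_s)$, $\chi(D_x)$ from which the singular term can be eliminated. One small remark: rather than invoking distinguished triangles in the iterated homology $H_*(H_*(-,d_+),d_v^*)$, which requires some care to set up, it is cleaner to take Euler characteristics directly at the chain level, where the Euler characteristic of a mapping cone is simply the (signed) difference of the Euler characteristics of its terms and the passage to homology changes nothing. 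This sidesteps any subtlety about the two-step construction and produces the same identities. The bookkeeping you flag as the ``hard part'' is genuinely just bookkeeping and is carried out (in slightly different grading conventions) in the original Khovanov--Rozansky papers.
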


All three of these homology theories can be viewed as the $E_{2}$ page of the spectral sequence induced by the cube filtration. The $E_{\infty}$ page is the homology with respect to the total differential $d_{+}+d_{v}$, which Rasmussen calls the ``$\mathfrak{sl}_{-1}$" homology because it categorifies $H_{-1}(L)$.

%
%
%
%
%
%
%
%
%
%

\subsection{$\sln$ Homology} The $\sln$ chain complexes are obtained by adding additional differentials to the HOMFLY-PT complex. If $v$ is a negative crossing in $D$, define $C_{n}(v)$ by 

\begin{figure}[H]
\centering
\begin{tikzpicture}
  \matrix (m) [matrix of math nodes,row sep=6.5em,column sep=8em,minimum width=2em] {
     R\{2,-2, -2\} & R\{0,0,-2\} \\
     R\{0, -2, 0\} & R\{0,0,0\} \\};
  \path[-stealth]
    (m-1-1) edge node [left] {$U_{a_{1}(v)}-U_{b_{2}(v)}$} (m-2-1)
            edge [bend left = 15] node [above] {$Q(v)$} (m-1-2)
    (m-2-1) edge [bend left = 15] node [above] {$U_{a_{1}(v)}-U_{b_{1}(v)}$} (m-2-2)
    (m-1-2) edge node [right] {$1\hspace{27mm}$} (m-2-2)
            edge [bend left=15] node [below] {$p_{2}(v,n+1)$} (m-1-1)
    (m-2-2) edge [bend left=15] node [below]  {$p_{1}(v,n+1)$} (m-2-1);        
\end{tikzpicture}
\end{figure}

\noindent
where $p_{2}(v,n+1)$ and $p_{1}(v,n+1)$ are chosen such that 
\[ Q(v)p_{2}(v,n+1) = (U_{a_{1}(v)}-U_{b_{1}(v)})p_{1}(v) = U^{n+1}_{a_{1}(v)}+U^{n+1}_{a_{2}(v)}-U^{n+1}_{b_{1}(v)}-U^{n+1}_{b_{2}(v)} \in R \]

\noindent
Note that these choices are possible because $R$ is a quotient of $R_{0}$ by the ideal on the linear elements - it would not be possible in $R_{0}$.

Similarly, if $v$ is a negative crossing in $D$, define $C_{n}(v)$ by

\begin{figure}[!h]
\centering
\begin{tikzpicture}
  \matrix (m) [matrix of math nodes,row sep=6.5em,column sep=8em,minimum width=2em] {     
     R\{0, -2, 0\} & R\{0,0,0\} \\
     R\{0,-2, 2\} & R\{-2,0, 2\} \\};
  \path[-stealth]
    (m-1-1) edge node [left] {$\hspace{20mm}1$} (m-2-1)
            edge [bend left = 15] node [above] {$U_{a_{1}(v)}-U_{b_{1}(v)}$} (m-1-2)
    (m-2-1) edge [bend left = 15] node [above] {$Q(v)$} (m-2-2)
    (m-1-2) edge node [right] {$U_{a_{1}(v)}-U_{b_{2}(v)}$} (m-2-2)
            edge [bend left=15] node [below] {$p_{1}(v,n+1)$} (m-1-1)
    (m-2-2) edge [bend left=15] node [below]  {$p_{2}(v,n+1)$} (m-2-1);        
\end{tikzpicture}
\end{figure}

As with HOMFLY-PT homology, define $C_{n}^{M}(D)$ to be the tensor product of these complexes over all the crossings in $D$:

\[ C^{M}_{n}(D) = \Big[ \bigotimes_{v \in V_{4}(D)}  C_{n}(v) \Big]  \{-w + b, w + b - 1, w - b + 1\} \]

\noindent
The new differentials (with coefficient $p_{1}$ or $p_{2}$) have triple grading $(2n - 2, -2, 0)$. We will call these the $d_{-}$ differentials, so that the total differential is given by $d_{+}+d_{-}+d_{v}$.

\begin{lem}

The total differential $d_{+}+d_{-}+d_{v}$ satisfies $(d_{+}+d_{-}+d_{v})^{2}=0$.

\end{lem}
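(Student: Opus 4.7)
The plan is to exhibit $C^M_n(D) = \bigotimes_{v \in V_4(D)} C_n(v)$ as a tensor product of local matrix factorizations and reduce the identity $(d_+ + d_- + d_v)^2 = 0$ to a telescoping cancellation. The first step is to show that for each crossing $v$, the local complex $C_n(v)$ with differential $d_+ + d_- + d_v$ is a matrix factorization over $R$ with potential
\[W(v) := U_{a_1(v)}^{n+1} + U_{a_2(v)}^{n+1} - U_{b_1(v)}^{n+1} - U_{b_2(v)}^{n+1},\]
meaning $(d_+ + d_- + d_v)^2 = W(v) \cdot \mathrm{id}$ on $C_n(v)$. Once this is in hand, the Leibniz rule for tensor products of matrix factorizations yields $d_{\mathrm{tot}}^2 = \big(\sum_v W(v)\big) \cdot \mathrm{id}$ on $C^M_n(D)$. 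In a connected closed braid diagram every edge $e_i$ is the outgoing edge of exactly one crossing and the incoming edge of exactly one crossing, so in $\sum_v W(v)$ each monomial $U_i^{n+1}$ cancels with its opposite-sign counterpart and the total potential vanishes.

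The local identity is verified by direct computation at the four vertices of the $2 \times 2$ square $C_n(v)$. The essential algebraic input is that $L(v) = 0$ holds in $R$, so that $Q(v) = -(U_{a_1(v)} - U_{b_1(v)})(U_{a_1(v)} - U_{b_2(v)})$ in $R$; this factorization gives the anticommutator $d_+ d_v + d_v d_+ = 0$, which is the HOMFLY-PT part already implicit in Section 4.2. The defining relations $L_1(v)\, p_1(v,n+1) = Q(v)\, p_2(v,n+1) = W(v)$ in $R$ are exactly the matrix factorization condition $(d_+ + d_-)^2 = W(v) \cdot \mathrm{id}$. Combining these two relations, and using that $L_1(v) = U_{a_1(v)} - U_{b_1(v)}$ is a non-zero-divisor in $R$, forces the compatibility $p_1(v,n+1) = -(U_{a_1(v)} - U_{b_2(v)})\, p_2(v,n+1)$ in $R$, which in turn yields the remaining anticommutator $d_- d_v + d_v d_- = 0$. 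The identity $d_-^2 = 0$ is immediate because the image of $d_-$ sits in a column with no further $d_-$-arrows. The negative crossing case is entirely symmetric.

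The main obstacle is careful bookkeeping of signs in the tensor product convention, so that the local matrix factorization identities at different crossings combine into a single global one; in particular, the interaction of $d_v$ from one crossing with $d_+ + d_-$ from another needs to be sign-compatible. Marked points (bivalent vertices) simply contribute an identity matrix factorization with zero potential and so do not disturb the telescoping of $\sum_v W(v)$. Once signs are pinned down, the cancellation of the global potential on a closed diagram is essentially formal.
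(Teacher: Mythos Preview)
Your approach is exactly the paper's: show each $C_n(v)$ is a matrix factorization with potential $W(v)$, then use that tensor products sum potentials. The paper's proof is the three-line version of what you wrote; you have simply filled in the local verification that the paper leaves implicit (citing Khovanov--Rozansky).

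One small correction to your telescoping step. Because $D$ is \emph{decorated}, it has a bivalent vertex $v_0$, and the two edges $e_{a(v_0)}$, $e_{b(v_0)}$ incident to it are each adjacent to only one crossing. So the cancellation in $\sum_{v\in V_4(D)} W(v)$ is not perfect: what survives is $U_{b(v_0)}^{\,n+1}-U_{a(v_0)}^{\,n+1}$. This remainder vanishes \emph{in $R$} (not in $R_0$), because summing the relations $L(v)=0$ over $v\in V_4(D)$ by the same telescoping gives $U_{b(v_0)}-U_{a(v_0)}=0$ in $R$. The paper is signaling exactly this when it says the sum of potentials is zero ``in $R$''. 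Your sentence about the marked point contributing ``an identity matrix factorization with zero potential'' is not quite the right picture; it is the ring $R$, not an extra tensor factor, that absorbs the leftover term.

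A second, more minor point: your deduction $p_1=-(U_{a_1(v)}-U_{b_2(v)})\,p_2$ by cancelling $U_{a_1(v)}-U_{b_1(v)}$ presumes this linear form is nonzero in $R$, which can fail for particular diagrams. The clean fix is to take $p_1,p_2$ to be the explicit Khovanov--Rozansky polynomials (defined already in $R_0$), for which the needed compatibility holds identically rather than by cancellation.
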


\begin{proof} Each complex $C_{n}(v)$ is a matrix factorization with potential $w_{n}(v)= U^{n+1}_{a_{1}(v)}+U^{n+1}_{a_{2}(v)}-U^{n+1}_{b_{1}(v)}-U^{n+1}_{b_{2}(v)}$. Taking the tensor product of matrix factorizations corresponds to summing potentials, and the sum of these potentials is zero in $R$.
\end{proof}

 Define $\mathbf{gr}_{n}$ to be the grading 
\[ \mathbf{gr}_{n} = \mathbf{gr}_{q} + \frac{n-1}{2} \mathbf{gr}_{h} \]

\noindent
The sum $d_{+}+d_{-}$ is homogeneous of degree $n+1$ with respect to $\mathbf{gr}_{n}$.

\begin{defn}
The \emph{reduced} $\sln$ complex is given by 
\[ \overline{C}_{n}(D) = C^{M}_{n}(D) \otimes \big[ R\{1, -2\} \xrightarrow{ \hspace{4mm} U_{1} \hspace{3mm}} R\{-1,0\} \big] \]

\noindent
where the bigrading is given by $(\mathbf{gr}_{n}, \mathbf{gr}_{v})$. The reduced $\sln$ homology is 
\[ \overline{H}_{n}(L)=H_{*}(H_{*}(\overline{C}_{n}(D), d_{+}+d_{-}),d_{v}^{*})\]
\end{defn}

Let $v_{0}$ denote the marked bivalent vertex in $D$, and let $e_{a(v_{0})},e_{b(v_{0})}$ be the outgoing and incoming edges at $v_{0}$, respectively. 

\begin{defn}

The \emph{unreduced} $\sln$ complex is given by 
\[ C_{n}(D) = C^{M}_{n}(D) \otimes \big[ R\{\frac{-n+1}{2}, -1\} \xleftarrow{ \hspace{4mm} (n+1)U^{n}_{a(v_{0})} \hspace{3mm}} R\{\frac{n-1}{2},-1\} \big] \]

\end{defn}

\noindent
Note that in $R$, $U_{a(v_{0})}=U_{b(v_{0})}$, so we could just has easily chosen the incoming edge instead of the outgoing edge.

\begin{lem}
The graded Euler characteristic of reduced $\sln$ homology is the reduced $\sln$ polynomial, and similarly for the unreduced versions.
\[ \sum_{I,J} (-1)^{J/2} q^{I} \dim(\overline{H}_{n}^{I,J}(L)) = \overline{P}_{n}(q,L) \]
\[ \sum_{I,J} (-1)^{J/2} q^{I} \dim(H_{n}^{I,J}(L)) = P_{n}(q,L) \]

\end{lem}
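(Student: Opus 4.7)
The plan is to reduce the computation to a local matrix-factorization calculation at each crossing, invoking the Khovanov-Rozansky character formulas, and then verify the $\sln$ skein relation.

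First, I would observe that the signed graded Euler characteristic $\chi(L)=\sum_{I,J}(-1)^{J/2}q^{I}\dim((-)^{I,J})$ is preserved under passage to homology with respect to $d_v$, since $d_v$ has bidegree $(0,2)$ in $(\mathbf{gr}_n,\mathbf{gr}_v)$ and so $(-1)^{J/2}$ flips sign across each $d_v$-arrow. Therefore $\chi(\overline{H}_n(L))$ equals the same Euler characteristic computed on the $E_2$-page $H_*(\overline{C}_n(D),d_++d_-)$, i.e.\ the graded character of the underlying matrix factorization. The analogous reduction works in the unreduced case for $\chi(H_n(L))$.

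Second, I would compute this character locally. The complex $\overline{C}_n(D)$ is the tensor product of the local matrix factorizations $C_n(v)$ (one per crossing) with the extra reduced factor $R\{1,-2\}\xrightarrow{U_1}R\{-1,0\}$. Each $C_n(v)$ is a matrix factorization with potential $w_n(v)=U_{a_1(v)}^{n+1}+U_{a_2(v)}^{n+1}-U_{b_1(v)}^{n+1}-U_{b_2(v)}^{n+1}$, and these potentials telescope to zero in $R$ since $R$ is obtained from $R_0$ by quotienting by the linear relations $L(v)=0$. The input from \cite{KR} and \cite{Rasmussen} is that, at each vertex of the cube of resolutions (i.e.\ each complete MOY resolution of $D$), the graded character of the homology of the combined matrix factorization equals the $\sln$ MOY polynomial of the associated planar trivalent graph. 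This is exactly the content of the Khovanov-Rozansky categorification theorem at the level of graded Euler characteristics.

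Third, I would assemble the vertex contributions along $d_v$ and verify that the result satisfies the defining $\sln$ skein relation
\[ q^{n}\chi(\overline{H}_n(D_+))-q^{-n}\chi(\overline{H}_n(D_-))=(q-q^{-1})\chi(\overline{H}_n(D_s)). \]
Together with the normalization $\overline{P}_n(\unknot)=1$, this determines $\overline{P}_n$ uniquely. The unknot value is a direct calculation: for a trivial one-crossing diagram, $\overline{C}_n$ reduces to the two-term complex $R\{1,-2\}\xrightarrow{U_1}R\{-1,0\}$, whose $d_v^{*}$-homology is $\mathbb{Q}$ in bidegree $(0,0)$. For the unreduced version, the extra factor $R\{(-n+1)/2,-1\}\xleftarrow{(n+1)U_{a(v_0)}^{n}}R\{(n-1)/2,-1\}$ converts the reduced unknot value $1$ to $(q^n-q^{-n})/(q-q^{-1})$, matching the unreduced $\sln$ polynomial of the unknot, and the same skein relation extends.

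The main obstacle is the second step, namely the precise local character computation for each $C_n(v)$. The grading shifts $\{-w+b,w+b-1,w-b+1\}$ and the crossing-by-crossing $\mathbf{gr}_n$ and $\mathbf{gr}_v$ shifts must be tracked carefully in order to recover the exact skein coefficients $q^{\pm n}$ rather than a framed variant. This is essentially the MOY state-sum verification Rasmussen carries out in \cite{Rasmussen}, and once it is in hand the remaining assembly is formal.
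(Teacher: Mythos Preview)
The paper does not give its own proof of this lemma; it is stated as a background fact about Khovanov--Rozansky homology, implicitly deferring to \cite{KR} and \cite{Rasmussen}. Your sketch is precisely the standard argument from those references: pass the Euler characteristic through the $d_v$-spectral sequence, compute the local graded characters of the matrix factorizations via the MOY calculus, and then verify the $\sln$ skein relation together with the unknot normalization. So your approach is correct and matches the proof in the cited literature, which is all the paper is relying on.

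One small correction: for the unknot you should use the $1$-strand braid closure (a circle with the marked bivalent vertex and no $4$-valent vertices), not a ``trivial one-crossing diagram.'' In that case $R=\Q[U_1]$, the tensor product over $V_4(D)$ is empty, and after the global shift $\{-w+b,\,w+b-1,\,w-b+1\}=\{1,0,0\}$ the reduced complex is exactly the two-term complex you wrote, with $d_v^*$-homology $\Q$ in bidegree $(0,0)$. The unreduced factor then produces $[n]_q$ as you say.
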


%
%
%
%
%
%

\subsection{Properties of Khovanov-Rozansky Homology}

The HOMFLY-PT and $\sln$ homologies satisfy many structural properties analogous to $\cfk$ and $\cfk_{n}$. For any $l$-component link $L$ in $S^{3}$, we have:

$\bullet$ $H(L)$ is a finitely generated, free $\Q[U_{1},...,U_{l}]$-module.

$\bullet$ $H_{n}(L)$ is a finitely generated module (not necessarily free) over 

\[\Q[U_{1},...,U_{l}]/(U_{1}^{n}=...=U_{l}^{n}=0)\]

$\bullet$ For all $n$, there is a spectral sequence from $H(L)$ to $H_{n}(L)$.

\vspace{1mm}

\noindent
The reduced complexes satisfy the same properties, with some stronger properties for knots:
\vspace{-1mm}

$\bullet$ $\overline{H}(K)$ is finite-dimensional over $\Q$.

$\bullet$ There is an integer $n(K)$ such that for all $n \ge n(K)$, the spectral sequence from $\overline{H}(K)$ to $\overline{H}_{n}(K)$ is trivial, i.e. $\dim(\overline{H}(K)) = \dim(\overline{H}_{n}(K))$.

\vspace{1mm}

\section{The Conjectured Spectral Sequences} Inspired by Lemma \ref{isolemma}, we conjecture the following relationship between $\sln$ homology and $\hfk_{n}$.

\begin{conj} \label{conj5.1}

For all $n \ge 1$, and all $L \in S^{3}$ there are spectral sequences from $H_{n}(L)$ to $\hfk_{n}(L)$ and from $\overline{H}(L)$ to $\widehat{\hfk}_{n}(L)$. The grading $\mathbf{gr}_{n}+\frac{n}{2}\mathbf{gr}_{v}$ on $H_{n}(L)$ descends to the grading $\gr_{n}$ on $\hfk_{n}(L)$ (and similarly for the reduced versions).

\end{conj}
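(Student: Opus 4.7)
The plan is to derive Conjecture~\ref{conj5.1} from the conjectural skein exact triangle in Conjecture~\ref{conj3.9} together with Lemma~\ref{isolemma}. Fix a braid diagram $D$ for $L$ with crossings $c_{1},\ldots,c_{k}$. At each crossing, Conjecture~\ref{conj3.9} expresses $C_{F(n)}(D_{\pm})$ as the mapping cone of a chain map between $C_{F(n)}(D_{x})$ and $C_{F(n)}(D_{s})$. Resolving all $k$ crossings simultaneously produces a $k$-dimensional hypercube whose vertices are the complexes $C_{F(n)}(S_{I})$, where $S_{I}$ ranges over the $2^{k}$ completely singular diagrams obtained by replacing each crossing with either a singularization or the oriented smoothing. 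The total complex of this hypercube is chain homotopy equivalent to $C_{F(n)}(D) = \cfk_{n}(D)$, whose homology is $\hfk_{n}(L)$.

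The cube filtration (by, say, the number of smoothings in $S_{I}$) is finite and hence bounded, so the induced spectral sequence converges to $\hfk_{n}(L)$. Its $E_{1}$ page is $\bigoplus_{I} H_{F(n)}(S_{I})$, and by Lemma~\ref{isolemma} this is isomorphic to $\bigoplus_{I} H_{n}(S_{I})$ as graded vector spaces. The $d_{1}$ differentials are the connecting maps from the iterated mapping cone; by the second assertion in Conjecture~\ref{conj3.9}, each such map is the $\sln$ edge map between adjacent completely singular resolutions. Therefore $(E_{1},d_{1})$ coincides with the cube-of-resolutions presentation of $\sln$ homology, so $E_{2} \cong H_{n}(L)$, yielding the desired spectral sequence $H_{n}(L) \Rightarrow \hfk_{n}(L)$. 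For the reduced half of the conjecture, I would run the same argument after setting $U_{i}=0$ for a basepoint $w_{i}$ on the marked component and pairing with a reduced version of Conjecture~\ref{conj3.9}, so that the vertex contributions become reduced $\sln$ invariants $\overline{H}_{n}(S_{I})$.

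To match gradings, I would verify that Lemma~\ref{isolemma} carries $\mathbf{gr}_{n}$ on $H_{n}(S_{I})$ to the restriction of $\gr_{n}$ on $H_{F(n)}(S_{I})$, and that the cube filtration level on the $E_{1}$ page accounts precisely for the $\tfrac{n}{2}\mathbf{gr}_{v}$ correction in the combined grading $\mathbf{gr}_{n}+\tfrac{n}{2}\mathbf{gr}_{v}$. The overall shifts $\{-w+b, w+b-1, w-b+1\}$ in $C_{n}^{M}(D)$ would be reconciled with the grading conventions for $\hfk_{n}$ coming from the punctured Heegaard diagram. Since each page differential $d_{r}$ raises the cube filtration by exactly one while preserving $\mathbf{gr}_{n}$ modulo that filtration, $d_{r}$ would decrease $\gr_{n}$ by $n$ as claimed.

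The main obstacle, as the author emphasizes, is Conjecture~\ref{conj3.9} itself. The three diagrams $D_{+}, D_{x}, D_{s}$ give rise to complexes with different ground-ring substitutions (the single-basepoint relation $V_{i} = (U_{a(i)}^{n}-U_{b(i)}^{n})/(U_{a(i)}-U_{b(i)})$ versus the double-point relation $W_{ij}=p_{2}(i,j,n)$) and with different Koszul tensor factors $K_{n}(S)$. Consequently the triangle maps cannot be imported directly from the Manolescu--Szab\'o exact triangle of Theorem~\ref{exacttriangle}, since setting basepoints equal in that proof no longer commutes with the $\hfk_{n}$ substitutions. A promising route is to construct the triangle first at the level of $\cfk_{U,V,W}$ using holomorphic triangle counts, with an intermediate curved complex that interpolates between the two basepoint conventions, and then verify that after passage to $C_{F(n)}$ the induced map between completely singular vertices agrees with the $\sln$ edge map via the identification of Lemma~\ref{isolemma}. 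The compatibility with the Koszul factor, which couples the vertex and edge contributions through the relation $L(i,j)p_{1}(i,j,n) + Q(i,j)p_{2}(i,j,n) = \sum U_{\bullet}^{n}$, is likely to be the most delicate point.
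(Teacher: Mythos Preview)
Your proposal is correct as a conditional argument and takes essentially the same approach as the paper: the paper does not prove Conjecture~\ref{conj5.1} (it is, after all, a conjecture), but simply asserts that it follows from Conjecture~\ref{conj3.9}, and you have supplied the expected cube-of-resolutions argument that makes this implication precise. Your identification of the $E_{1}$ page via Lemma~\ref{isolemma}, of the $d_{1}$ differential via the second clause of Conjecture~\ref{conj3.9}, and of the main obstacle as the mismatch in basepoint substitutions between $D_{+}$, $D_{x}$, and $D_{s}$ all match the paper's own remarks.
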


\noindent
In the case $n=2$, this is the $\delta$-grading on Khovanov homology, which agrees with the conjecture from \cite{Rasmussen3}. Note that Conjecture \ref{conj5.1} follows from Conjecture \ref{conj3.9}.

\subsection{Computational Evidence} In this section we will describe classes of knots for which the conjecture is true.

\subsubsection{n=1:}

\begin{lem}

For the $\mathfrak{sl}_{1}$ case, $H_{1}(L)=\hfk_{1}(L)=\Q\{0\}$ for any link $L$. Similarly, $\overline{H}_{1}(L)=\widehat{\hfk}_{1}(L)=\Q\{0\}$.

\end{lem}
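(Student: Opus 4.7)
The four assertions split into a Heegaard Floer pair and a Khovanov--Rozansky pair, which I would handle separately. For $\hfk_1(L)$ and $\widehat{\hfk}_1(L)$, the plan is to invoke the identifications $\hfk_1(L) \cong \widehat{\mathit{HF}}(Y)$ (property (b) of the introduction) and $\widehat{\hfk}_1(L) \cong \widehat{\mathit{HF}}(Y)$ (stated at the end of Section~\ref{reducedsection}); both come from the fact that the $n=1$ substitution yields $V_i = (U_{a(i)} - U_{b(i)})/(U_{a(i)} - U_{b(i)}) = 1$, collapsing the master differential onto the hat Heegaard Floer differential on $Y$. Specializing to $Y = S^3$ gives $\widehat{\mathit{HF}}(S^3) = \Q$ in Maslov grading $0$, and since $\gr_1 = -M + 0 \cdot A = -M$, the generator lies in $\gr_1$-degree $0$.

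For $H_1(L)$ and $\overline{H}_1(L)$, I would argue that the $\mathfrak{sl}_1$ complex is ``trivially'' one-dimensional, in parallel with the fact that $P_1(q,L) = \overline{P}_1(q,L) = 1$ for every link. The cleanest attack is to exploit a unit in $d_-$ at each crossing. Using the defining relation $L(v) = 0$ of the ground ring $R$, a direct calculation gives $Q(v) \equiv -(U_{a_1(v)} - U_{b_1(v)})(U_{a_1(v)} - U_{b_2(v)}) \pmod{L(v)}$, so the $n=1$ potential $U^2_{a_1(v)} + U^2_{a_2(v)} - U^2_{b_1(v)} - U^2_{b_2(v)}$ equals $-2\,Q(v)$ in $R$. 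Hence one may take $p_2(v,2) = -2$ (a unit) and $p_1(v,2) = 2(U_{a_1(v)} - U_{b_2(v)})$. The unit $p_2$ enables Gaussian cancellation of two of the four summands in each local crossing complex $C_1(v)$; iterating this across all crossings of $D$ reduces $C^M_1(D)$ to a rank-one complex, whose homology is $\Q$. Combined with the grading shift $\{-w+b, w+b-1, w-b+1\}$, the surviving generator lands in $\mathbf{gr}_1$-degree $0$ and $\mathbf{gr}_v$-degree $0$, giving $H_1(L) = \Q\{0\}$. The reduced case $\overline{H}_1(L)$ is handled identically starting from $\overline{C}_1(D)$.

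The main obstacle is the global bookkeeping of the Gaussian cancellations across crossings, since the local eliminations interact through the tensor product structure. However, because $p_2 = -2$ is a \emph{constant}, the ``zigzag'' corrections from cancellation remain supported in the two-term oriented-resolution complexes at each crossing and the reduction is clean. An alternative, more conceptual route would be to combine the Euler characteristic $P_1(q,L) = 1$ (a tautological consequence of the skein relation at $a = q$ together with $P_1(\unknot) = 1$) with a purity statement showing that $H_1(L)$ is concentrated in a single grading, but this still relies on the same unit-in-$d_-$ observation to establish purity.
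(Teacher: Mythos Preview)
Your treatment of the Heegaard Floer side is fine: you correctly cite property~(b) and the computation at the end of Section~\ref{reducedsection}, which together with $\widehat{HF}(S^3)=\Q$ give $\hfk_1(L)=\widehat{\hfk}_1(L)=\Q\{0\}$. Since the paper states this lemma without proof, there is nothing to compare against; your argument here stands on its own.

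On the Khovanov--Rozansky side, your key computation is right: in $R$ one has $Q(v)\equiv -(U_{a_1(v)}-U_{b_1(v)})(U_{a_1(v)}-U_{b_2(v)})$, hence the $\mathfrak{sl}_1$ potential equals $-2Q(v)$ and one may take $p_2(v,2)=-2$, a unit. However, the sentence ``iterating this across all crossings of $D$ reduces $C^M_1(D)$ to a rank-one complex'' is false as written. Gaussian cancellation of the contractible factor $(R\xrightleftharpoons[\,-2\,]{Q(v)}R)$ at a single crossing leaves behind the rank-$2$ matrix factorization $(R\xrightleftharpoons[\,2(U_{a_1}-U_{b_2})\,]{U_{a_1}-U_{b_1}}R)$, and tensoring over the $c$ crossings gives a complex of rank $2^c$ over $R$, not rank $1$. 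Moreover, after these cancellations all of $d_v$ has been absorbed, so you are left with the genuine problem of computing $H_*(\text{rank-}2^c\text{ complex},\,d_++d_-)$; nothing you wrote explains why this homology is one-dimensional. Your parenthetical about ``global bookkeeping'' flags exactly the step that is missing.

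One way to finish: observe that the surviving rank-$2$ factor at each crossing is precisely the $\mathfrak{sl}_1$ matrix factorization assigned to the oriented smoothing in the cube of resolutions, so after cancellation $C^M_1(D)$ is homotopy equivalent (as a complex of matrix factorizations) to $C^M_1(D_s)$ where $D_s$ is obtained by smoothing one crossing. Iterating reduces to the case of the trivial $b$-strand braid, which can be handled directly. Alternatively, appeal to Rasmussen's general structural results for $\mathfrak{sl}_n$ homology, where the triviality of $H_1$ is recorded. Either route requires real work beyond the local cancellation you describe.
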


\subsubsection{n=2:}

\begin{lem} [\hspace{1sp}\cite{Rasmussen3}]

For any 2-bridge knot $K$, \[\overline{H}_{2}(K) \cong \widehat{\hfk}_{2}(K)\] as $\delta$-graded vector spaces.

\end{lem}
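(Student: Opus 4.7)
The plan is to exploit the fact that 2-bridge knots are alternating, combined with the thinness of both homology theories on alternating knots. First I would note that $\overline{H}_{2}(K)$ coincides with reduced Khovanov homology $\overline{Kh}(K)$, and by the lemma in Section \ref{reducedsection}, $\widehat{\hfk}_{2}(K)$ coincides with $\delta$-graded reduced knot Floer homology $\widehat{\hfk}(K)$. Thus the statement reduces to comparing two classical invariants on the class of 2-bridge knots.

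Next I would invoke two thinness theorems: Lee's theorem, which says that $\overline{Kh}(K)$ of an alternating knot is supported in a single $\delta$-grading (with $\delta = q - 2h$), and the Ozsv\'{a}th--Szab\'{o} theorem, which says that $\widehat{\hfk}(K)$ of an alternating knot is supported in a single $\delta$-grading (with $\delta = A - M$). In both cases the $\delta$-diagonal is determined by the signature $\sigma(K)$ via the same formula. Since 2-bridge knots are alternating (every 2-bridge knot admits an alternating diagram), both $\overline{H}_{2}(K)$ and $\widehat{\hfk}_{2}(K)$ are concentrated in a single $\delta$-degree, and that single degree is the same on the two sides after matching conventions (the signature controls both).

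Once thinness is established, the isomorphism follows from a dimension count. A thin theory is determined up to graded isomorphism by its graded Euler characteristic together with the $\delta$-grading in which it is supported. The graded Euler characteristic of $\overline{Kh}(K)$ evaluated at $q = 1$ gives $|\det(K)|$ (via the Jones polynomial), and the graded Euler characteristic of $\widehat{\hfk}(K)$ evaluated at $t = -1$ also gives $|\det(K)|$ (via the Alexander polynomial). Hence the total dimensions agree, and since both sit in the same single $\delta$-degree, they must be isomorphic as $\delta$-graded $\Q$-vector spaces.

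The main obstacle is carefully reconciling the normalization and grading conventions used by Rasmussen for reduced $\mathfrak{sl}_{2}$ homology with the $\gr_{n}$ convention introduced here for $\widehat{\hfk}_{n}$, in particular making sure that the $\delta$-diagonals on the two sides really do coincide rather than merely being parallel. Everything else is a direct application of the two thinness theorems together with the $|\det(K)|$ computation.
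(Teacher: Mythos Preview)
Your argument is correct and is precisely the standard one. The paper does not give its own proof of this lemma; it simply cites Rasmussen, and Rasmussen's observation is exactly the thinness argument you outline: for alternating (hence in particular 2-bridge) knots, both reduced Khovanov homology and $\widehat{\hfk}$ are supported on a single $\delta$-diagonal determined by the signature, and both have total rank $|\det(K)|$, so they agree as $\delta$-graded vector spaces. Your identification $\overline{H}_{2}(K)\cong\overline{Kh}(K)$ and $\widehat{\hfk}_{2}(K)\cong\widehat{\hfk}(K)$ (the latter from Section~\ref{reducedsection}) is also correct, and the grading caveat you raise is the only real bookkeeping to do.
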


We will prove that this is the case for the unreduced theories as well:

\begin{lem}

For any 2-bridge knot $K$, \[H_{2}(K) \cong \hfk_{2}(K)\] as $\delta$-graded $\Q[U]$-modules.

\end{lem}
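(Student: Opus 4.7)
The plan is to leverage the known reduced case $\overline{H}_{2}(K) \cong \widehat{\hfk}_{2}(K)$ from \cite{Rasmussen3} together with the thinness of 2-bridge knots on both sides to upgrade the isomorphism to the unreduced setting.

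First, on the knot Floer side, I would use the alternate definition from Lemma \ref{puncturelemma2}: writing $\hfk_{2}(K)$ as the homology of $\cfk_{U,V}(\cH)/(UV=0,\, V=2U)$ where $\cH$ has a single $w,z$ pair. Since 2-bridge knots are quasi-alternating, $\widehat{\hfk}(K)$ is $\delta$-thin, and more strongly, the master complex $\cfk_{U,V}(K)$ is chain homotopy equivalent over $\Q[U,V]$ to a direct sum of elementary ``box'' pieces indexed by the coefficients of the Alexander polynomial, together with one global grading shift coming from the signature of $K$. Substituting $V=2U$ and $UV=0$ in each elementary piece produces an explicit summand of $\hfk_{2}(K)$ as a $\delta$-graded $\Q[U]/U^{2}$-module; the contribution is either a free rank-one piece $[2]\{a\}$ or a pair of torsion pieces $\Q\{a\}\oplus\Q\{a+2\}$, depending on the local form of the box.

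Second, on the Khovanov side, 2-bridge knots are alternating, hence Kh-thin; classical results on the Khovanov homology of alternating knots decompose $H_{2}(K)$ into an analogous direct sum of elementary $\Q[U]/U^{2}$-summands indexed by the same Alexander data and the signature. Applying the reduced isomorphism $\overline{H}_{2}(K)\cong\widehat{\hfk}_{2}(K)$ shows that the indexing data agree, and a piece-by-piece comparison shows the recipes building the two unreduced modules from their reduced inputs coincide, yielding $H_{2}(K) \cong \hfk_{2}(K)$ as $\delta$-graded $\Q[U]/U^{2}$-modules, hence as $\Q[U]$-modules.

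The hard part will be upgrading the graded vector space isomorphism to one of $\Q[U]$-modules. The $U$-action on $\hfk_{2}(K)$ is defined via holomorphic disc counts at the $w$ basepoint, while the $U$-action on $H_{2}(K)$ is algebraic, arising from the matrix factorization structure. Matching them requires tracking the $U$-action carefully through each elementary piece of the decomposition: this is automatic for free $[2]$ summands, but demands genuine care for the torsion $[1]\oplus[1]$ summands, which is exactly where a naive ``$H_{2}(K) \cong \overline{H}_{2}(K)\otimes_{\Q}\Q[U]/U^{2}$'' style splitting fails. The explicit trefoil computation $\hfk_{2}(T_{2,3})=[2]\{2\}\oplus[1]\{1\}\oplus[1]\{3\}$ in the excerpt, which mixes free and torsion summands, serves as a useful consistency check for the matching argument.
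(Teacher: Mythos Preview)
Your strategy is broadly the same as the paper's---exploit $\delta$-thinness on both sides and the known reduced isomorphism---but the execution differs in two concrete ways.

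On the knot Floer side, the paper does not decompose $\cfk_{U,V}(K)$ into ``box pieces.'' Instead it works directly with the reduced homology $\widehat{\hfk}_{2}(K)\cong\Q^{r}\{\sigma(K)\}$, observes that $\hfk_{2}(K)$ is obtained by tensoring with $\Q[U]/U^{2}$ and adding the differentials coming from discs through $w$ and $z$, and then uses the $\tau$ spectral sequence (which converges to $\Q$) to produce an explicit basis $\{x_{1},\dots,x_{r-1},y\}$ in which the extra differential pairs $x_{2i-1}\mapsto Ux_{2i}$ and kills $y$. This immediately gives $\hfk_{2}(K)=[2]\{\sigma(K)+1\}\oplus\tfrac{r-1}{2}[1]\{\sigma(K)-1\}\oplus\tfrac{r-1}{2}[1]\{\sigma(K)+1\}$. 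Your master-complex decomposition would reach the same endpoint, but note that for thin knots the structure is a single staircase rather than independent ``boxes,'' so your piecewise description needs care; the $\tau$-spectral-sequence argument sidesteps this by never touching the full $\cfk_{U,V}$ structure.

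On the Khovanov side, the paper does not appeal to a general structure theorem for alternating knots. It quotes a specific result of Alishahi and the author \cite{MeandAkram} giving the $\Q[U]$-module structure of $H_{2}(K)$ for thin knots directly as $[2]\{\sigma(K)+1\}\oplus\tfrac{r-1}{2}[1]\{\sigma(K)-1\}\oplus\tfrac{r-1}{2}[1]\{\sigma(K)+1\}$. Your ``classical results'' placeholder is exactly where this citation belongs; the $U$-module structure of unreduced Khovanov homology for thin knots is not as classical as you suggest, and this is precisely the input that resolves the ``hard part'' you flag about matching $U$-actions.
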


\begin{proof} Let $r$ denote the rank of $\overline{H}_{2}(K)$. We can write $\overline{H}_{2}(L)=\Q^{r}\{\sigma(K)\}$, where $\sigma(K)$ is the signature of $K$. In \cite{MeandAkram}, Alishahi and the author show that 
\[H_{2}(K) = \Q^{(r+1)/2} \{\sigma(K)-1\} \oplus \Q^{(r+1)/2} \{\sigma(K)+1\} \]

\noindent
Recording the module structure as well, this can be written, 
\[ H_{2}(K) = [2]\{\sigma(K)+1 \} \oplus  \frac{r-1}{2}[1]\{\sigma(K)-1\} \oplus \frac{r-1}{2}[1] \{\sigma(K)+1\}  \]

We must now compute the corresponding homology for $\hfk_{2}(L)$. The reduced homology $\widehat{\hfk}_{2}(K)$ is given by $\Q^{r} \{\sigma(K)\}$. If we were to allow discs to pass through the $\bf{w}$ and $\bf{z}$ basepoints, this would induce the $\tau$ spectral sequence on $K$, which converges to $\Q\{\sigma(K)\}$. Since the generators lie in a single $\delta$-grading and these new discs must have Maslov index 1, the new discs must have multiplicity 1 at $\bf{w}$ and 0 at $\bf{z}$ or vice versa.

The unreduced homology $\hfk_{2}(K)$ can be obtained from $\widehat{\hfk}_{2}(K)$ by tensoring with $\Q[U]/(U^{2}=0)\{-1\}$, and adding additional differentials corresponding to the discs which pass through $\bf{w}$ or $\bf{z}$. The $\tau$ spectral sequence tells us that we can find a basis $\mathcal{B}=\{x_{1},x_{1},...,x_{r-1},y \}$ for $\widehat{\hfk}_{2}(K)$ such that this differential is given by

\[ \partial(x_{i}) = \begin{cases} 
     Ux_{i+1} & \textrm{ if $i$ is odd} \\
      0 &  \textrm{ if $i$ is even} \\ 
   \end{cases} \]
   
\[ \partial(y) = 0 \hspace{34mm}\]

\noindent
Thus, the generators of homology in grading $\sigma(K)+1$ are $y$ and $x_{i}$ for $i$ odd, and the generators of homology in grading $\sigma(K)-1$ are $Uy$ and $Ux_{i}$ for $i$ even, giving
\[\hfk_{2}(K) = \Q^{(r+1)/2} \{\sigma(K)-1\} \oplus \Q^{(r+1)/2} \{\sigma(K)+1\} \]

\noindent
Since multiplication by $U$ is trivial on all generators of homology except $y$, the module structure is given by 
\[\hfk_{2}(K) = [2]\{\sigma(K)+1 \} \oplus  \frac{r-1}{2}[1]\{\sigma(K)-1\} \oplus \frac{r-1}{2}[1] \{\sigma(K)+1\}  \]

\noindent
where the $[2]$ is generated by $y$. This proves the lemma.

\end{proof}

\begin{rem}

Techinically, the $\tau$ spectral sequence is induced by allowing discs to pass through just the $z$ basepoint rather than $z$ and $w$. However, there is a spectral sequence from the former to the latter, and since the former is already just $\Q$, the two homologies must be isomorphic.

\end{rem}

\begin{lem}

If $K$ is the $(3,m)$ torus knot, then there is a spectral sequence from $H_{2}(K)$ to $\hfk_{2}(K)$.

\end{lem}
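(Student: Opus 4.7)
The plan is to follow the same template the author used for 2-bridge knots: compute $H_{2}(T_{3,m})$ and $\hfk_{2}(T_{3,m})$ explicitly as $\delta$-graded $\Q[U]/(U^{2}=0)$-modules, observe that they are abstractly isomorphic, and invoke the (trivial, one-page) spectral sequence provided by such an isomorphism. Both sides are accessible in closed form for $T_{3,m}$, so no new geometric input is required.

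On the $\mathfrak{sl}_{2}$ side, the reduced Khovanov homology of $T_{3,m}$ is known in closed form (Sto\v{s}i\'{c}, among others), and the module-theoretic refinement needed here was carried out by Alishahi and the author in \cite{MeandAkram}. Tensoring with $\Q[U]/(U^{2}=0)\{-1\}$ and adding the ``unreducing'' $U$-differential as in the 2-bridge proof then yields $H_{2}(T_{3,m})$ as a $\delta$-graded $\Q[U]/(U^{2})$-module. On the knot Floer side, Lemma \ref{puncturelemma2} gives
\[ \hfk_{2}(T_{3,m}) = H_{*}\bigl(\cfk_{U,V}(\cH)/(UV=0,\,V=2U)\bigr), \]
so we may start from Ozsv\'{a}th and Szab\'{o}'s explicit computation of $\cfk_{U,V}$ for torus knots in \cite{OS4}. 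Substituting $V=2U$ and $UV=0$ in that presentation produces $\hfk_{2}(T_{3,m})$ by a direct, if tedious, algebraic cancellation, and the $U$-action can be read off from the surviving generators exactly as in the 2-bridge proof (with the $\tau$-spectral-sequence argument pinning down which generators $U$ kills and which it sends to nonzero classes).

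With both modules computed, the final step is a grading-by-grading comparison: match the $\delta$-graded dimensions and the pattern of $[1]$- and $[2]$-summands on the two sides. Assuming the match holds, take the spectral sequence to be the trivial one whose $E_{2}$ page is $H_{2}(T_{3,m})$ and which collapses at $E_{2}$ to $\hfk_{2}(T_{3,m})$; the grading shift in Conjecture \ref{conj5.1} (with each differential dropping $\mathbf{gr}_{2}+\mathbf{gr}_{v}$ by $2$) is automatic since there are no higher differentials.

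The main obstacle will be the bookkeeping of the $\Q[U]/(U^{2})$-module structures, not mere rank-counting. On the Khovanov side, identifying the $U$-action requires importing the analysis of \cite{MeandAkram} and verifying it applies uniformly across the family $T_{3,m}$ (the three residue classes $m\bmod 3$ will likely be handled as separate cases, since the reduced Khovanov homology of $T_{3,m}$ depends on $m\bmod 3$). On the Floer side, the master complex of $T_{3,m}$ has many generators, and showing that after the substitutions $V=2U$ and $UV=0$ the resulting homology assembles into precisely the same $\delta$-graded $\Q[U]/(U^{2})$-module is the real combinatorial content of the proof.
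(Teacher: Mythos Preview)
Your plan has a genuine gap: for $(3,m)$ torus knots the two sides are \emph{not} isomorphic as $\delta$-graded vector spaces, so the ``trivial spectral sequence collapsing at $E_2$'' strategy that worked for 2-bridge knots cannot succeed here. Concretely, the paper computes (via Turner) that the $\delta$-graded Poincar\'e polynomial of $H_2(T_{3,3k+1})$ is
\[
q^{4k-1}(1+4q^{2}+6q^{4}+\cdots+6q^{2k}+3q^{2k+2}),
\]
while the master complex (from \cite{OS4}) with the substitutions $V=2U$, $UV=0$ gives
\[
q^{4k-1}(1+3q^{2}+4q^{4}+\cdots+4q^{2k}+2q^{2k+2})
\]
for $\hfk_2(T_{3,3k+1})$. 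These differ already at the $q^{4k+1}$ coefficient, so no isomorphism is available and the higher differentials in the putative spectral sequence are genuinely nonzero.

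The paper's argument is therefore weaker and more combinatorial than what you propose: it simply writes down both Poincar\'e polynomials and checks that their difference can be realised by a sequence of differentials each of $\delta$-degree $2$ (i.e.\ the excess in each grading pairs off with excess in the adjacent grading). No $\Q[U]/(U^2)$-module comparison is needed or attempted. Your references to \cite{MeandAkram} and the $\tau$-spectral-sequence bookkeeping are thus unnecessary for this lemma; what you actually need is Turner's computation of $\overline{Kh}(T_{3,m})$ (or equivalently $\delta$-graded $H_2$) and the staircase description of $\cfk_{U,V}(T_{3,m})$, followed by a straightforward coefficient-by-coefficient check.
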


\begin{proof}

Assume $m=3k+1$ for some $k$. From \cite{Turner}, the Poincare polynomial of $\delta$-graded $H_{2}(T_{3,3k+1})$ is given by
\begin{equation} \label{eq5.4}
 q^{4k-1}(1+4q^{2} + 6q^{4}+...+6q^{2k-2}+6q^{2k}+3q^{2k+2}) 
\end{equation}

\noindent
where the $\delta$-grading is twice the homological grading minus the quantum grading. On the knot Floer side, the master complex $\cfk_{U,V}(T_{3,3k+1})$ can be computed from its Alexander polynomial \cite{OS4}, and it is given by
\scriptsize
\[      x_{0} \xrightarrow{U} x_{1} \xleftarrow{V^{2}} x_{2}\xrightarrow{U} ... \xleftarrow{V^{2}}x_{2k-2}  \xrightarrow{U} x_{2k-1} \xleftarrow{V^{2}} x_{2k} \xrightarrow{U^{2}}x_{2k+1} \xleftarrow{V}  x_{2k+2} \xrightarrow{U^{2}}... \xleftarrow{V}x_{4k-2}\xrightarrow{U^{2}}x_{4k-1} \xleftarrow{V}  x_{4k}  \]

\normalsize
\noindent
Substituting $U=V$, $UV=0$ gives 
\[      x_{0} \xrightarrow{U} x_{1} \xleftarrow{0} x_{2}\xrightarrow{U} ... \xleftarrow{0}x_{2k-2}  \xrightarrow{U} x_{2k-1} \xleftarrow{0} x_{2k} \xrightarrow{0}x_{2k+1} \xleftarrow{U}  x_{2k+2} \xrightarrow{0}... \xleftarrow{U}x_{4k-2}\xrightarrow{0}x_{4k-1} \xleftarrow{U}  x_{4k}  \]

\noindent
Taking homology, the Poincare polynomial of $\hfk_{2}(T_{3,3k+1})$ is given by
\begin{equation} \label{eq5.5}
q^{4k-1}(1+3q^{2} + 4q^{4}+4q^{6}+...+4q^{2k}+2q^{2k+2})
\end{equation}

\noindent
Comparing (\ref{eq5.4}) with (\ref{eq5.5}), we see that there is a spectral sequence from $H_{2}(T_{3,3k+1})$ to $\hfk_{2}(T_{3,3k+1})$ where each differential $d_{k}$ is homogeneous of degree 2. 

From the same sources, when $m=3k+2$, the Poincare polynomials of $H_{2}(T_{3,3k+2})$ and $\hfk_{2}(T_{3,3k+2})$ are given by
\[ q^{4k+1}(2+5q^{2} + 6q^{4}+6q^{6}+...+6q^{2k}+3q^{2k+2}) \]

\noindent
and 
\[ q^{4k+1}(2+4q^{2} + 4q^{4}+4q^{6}+...+4q^{2k}+2q^{2k+2}) \]

\noindent
respectively. There is clearly a spectral sequence from the former to the latter where each differential increases $\delta$-grading by $2$.

\end{proof}

\begin{lem}

If $K$ is any knot in $S^{3}$ with crossing number $\le13$, then there is a spectral sequence from $H_{2}(K)$ to $\hfk_{2}(K)$.

\end{lem}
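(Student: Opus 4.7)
The plan is to reduce the statement to a finite computational verification by comparing $\delta$-graded Poincar\'e polynomials, in direct analogy with the argument just given for the $(3,m)$ torus knots. No skein-theoretic machinery is currently available (short of Conjecture \ref{conj3.9}) that would produce such a spectral sequence uniformly, so a case-by-case check over the tabulated prime knots with $c(K)\le 13$ is the natural route.

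For each such $K$ I would first assemble the $\delta$-graded Poincar\'e polynomial of $H_2(K)$, where the $\delta$-grading is $\mathbf{gr}_2+\mathbf{gr}_v$. This is a re-indexing of unreduced Khovanov homology, which has been tabulated for all knots with $c(K)\le 13$ via the Knot Atlas, Bar-Natan's \texttt{KnotTheory} package, and \texttt{knotkit}. The Poincar\'e polynomial of $\hfk_2(K)$ is then computed following the 2-bridge proof essentially verbatim: by the lemma in Section \ref{reducedsection}, $\widehat{\hfk}_2(K)$ equals $\widehat{\hfk}(K)$ as a $\gr_2$-graded vector space, and $\hfk_2(K)$ is obtained from $\widehat{\hfk}(K)\otimes \Q[U]/(U^2)\{-1\}$ by running the spectral sequence coming from discs passing through the $\bf{w}$ and $\bf{z}$ basepoints, which is the $\tau$-type spectral sequence on $\widehat{\hfk}(K)$. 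For every knot with $c(K)\le 13$ the relevant filtered data on $\widehat{\hfk}(K)$ has been computed, so $P_{\hfk_2(K)}(q)$ is determined.

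With both polynomials in hand, the existence of a spectral sequence whose differentials each decrease $\gr_2$ by $2$ is equivalent to the numerical condition
\[
P_{H_2(K)}(q)-P_{\hfk_2(K)}(q) \;=\; (1+q^{-2})\,Q(q),\qquad Q\in\Z_{\ge 0}[q,q^{-1}],
\]
since every successive differential cancels a pair of classes in $\delta$-gradings differing by $2$. This is precisely the condition verified by hand for $T_{3,3k+1}$ and $T_{3,3k+2}$ in the preceding lemma, and it is what one verifies knot-by-knot for $c(K)\le 13$.

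The hard part will be the second step for the non-alternating knots: when $\widehat{\hfk}(K)$ is not $\delta$-thin, the auxiliary $U$-differentials on $\widehat{\hfk}(K)\otimes \Q[U]/(U^2)$ mix $\delta$-gradings, and one must use the full $\tau$-filtered structure rather than the parity bookkeeping that sufficed in the 2-bridge case. This is bookkeeping-intensive but not mathematically novel, since the filtered structure is already extracted by existing knot Floer programs for all knots with crossing number at most $13$; the proof is then finished by the finite check above.
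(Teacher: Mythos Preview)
Your approach is essentially the same as the paper's: a finite computational check over the tabulated knots with $c(K)\le 13$, comparing the $\delta$-graded Poincar\'e polynomials on both sides and verifying the numerical cancellation condition. The only real difference is in how the knot Floer side is computed: the paper bypasses the $\tau$-filtered bookkeeping you describe and instead directly invokes Ozsv\'ath and Szab\'o's program, which outputs the complex $\cfk_{U,V}(K)/(UV=0)$; substituting $V=2U$ then gives $\cfk_2(K)$ immediately. This is a bit cleaner than your route, since the data you actually need is not just the $\tau$-spectral sequence (discs through $z$ only) but both the $U$- and $V$-parts of the differential on $\widehat{\cfk}(K)\otimes\Q[U]/(U^2)$, which is exactly the content of $\cfk_{U,V}/UV=0$.
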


\noindent
This lemma was verified computationally using Bar-Natan's KnotTheory package in Mathematica to compute Khovanov homology and the new C++ program by Ozsv\'{a}th and Szab\'{o} which computes the complex $\cfk_{U,V}(K)/UV=0$.

\subsubsection{$n \ge 3$:} It is difficult to find examples where unreduced Khovanov-Rozansky homology has been computed. For torus knots, there is a known formula for the $(2,m)$ torus knots, and a conjectured formula for $(3,m)$ torus knots. 

\begin{lem} \label{lem5.7}

If $K$ is the $(2,m)$ torus knot, then there is a spectral sequence from $H_{n}(K)$ to $\hfk_{n}(K)$.

\end{lem}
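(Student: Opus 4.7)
The plan is to follow the computational strategy used for the $(3,m)$ torus knot in the preceding lemma, exploiting the fact that both sides of the claimed spectral sequence admit explicit closed-form descriptions for $T_{2,m}$. Since $T_{2,m}$ is a knot, by Lemma \ref{puncturelemma2} we can bypass punctured Heegaard diagrams entirely and compute $\hfk_{n}(T_{2,m})$ directly as the homology of $\cfk_{U,V}(\cH)/\{UV=0,\ V=nU^{n-1}\}$, where $\cH$ is any unpunctured Heegaard diagram for $T_{2,m}$ with a single pair of basepoints.

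First I would write down the master complex $\cfk_{U,V}(T_{2,2k+1})$ explicitly as the classical staircase with $2k+1$ generators $x_{0},\ldots,x_{2k}$, with arrows alternating between $U$-edges and $V$-edges, and with bigradings determined by the Alexander polynomial of $T_{2,2k+1}$; this is essentially extracted from \cite{OS4}. After substituting $V = nU^{n-1}$ and setting $UV = 0$, each $V$-edge becomes multiplication by $nU^{n-1}$ and every composition $UV$ vanishes, so the complex decomposes cleanly over $\Q[U]/(U^{n})$. A direct calculation of the homology then produces an explicit $\gr_{n}$-graded decomposition of $\hfk_{n}(T_{2,2k+1})$ as a $\Q[U]/(U^{n})$-module, from which its Poincare polynomial in $\gr_n$ can be read off.

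Second, I would invoke the known formula for $H_{n}(T_{2,m})$ from the Khovanov--Rozansky literature and collapse it to a single-variable Poincare polynomial in the grading $\mathbf{gr}_{n}+\tfrac{n}{2}\mathbf{gr}_{v}$. Comparing with the computation above, I would verify that the difference of the two Poincare polynomials has the form $(1+q^{n})R(q)$ with $R$ a polynomial with non-negative integer coefficients. This is precisely the data required to realize a spectral sequence whose differentials each raise the grading by $n$, pairing off the excess generators on the $\sln$ side until only $\hfk_{n}(T_{2,m})$ remains, just as in the $n=2, T_{3,m}$ argument.

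The main obstacle I expect is the combinatorial bookkeeping: tracking the $U$-action and the normalization shifts $\{-w+b,\,w+b-1,\,w-b+1\}$ on the $\sln$ side, and then checking that the surplus generators really do come in pairs whose $\gr_{n}$-gradings differ by exactly $n$. Fortunately the knot Floer staircase for $T_{2,m}$ is combinatorially simpler than the complex used for $T_{3,m}$ in the preceding lemma, so I expect the matching of Poincare polynomials to go through in closed form for all $k$ simultaneously and, crucially, not to require the conjectural input that was needed for $T_{3,m}$ at $n\ge 3$.
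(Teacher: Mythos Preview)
Your approach is correct and would prove the lemma, but the paper takes a slightly different and slicker route. Rather than computing $\hfk_{n}(T_{2,2k+1})$ outright and then comparing Poincar\'e polynomials, the paper filters the complex $\cfk_{n}(T_{2,2k+1})$ by blocking the $w$-basepoint (i.e.\ replacing each $U$-edge in the staircase by $0$ while keeping the $nU^{n-1}$-edges), computes the homology of this associated graded, and observes that its Poincar\'e polynomial coincides \emph{on the nose} with Cautis's formula for $H_{n}(T_{2,2k+1})$ collapsed to the grading $\mathbf{gr}_{n}+\tfrac{n}{2}\mathbf{gr}_{v}$. The spectral sequence is then simply the filtration spectral sequence on the knot Floer complex, and $\hfk_{n}(T_{2,2k+1})$ never needs to be written down explicitly.

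Your plan---compute both endpoints and verify that the difference of Poincar\'e polynomials has the form $(1+q^{n})R(q)$ with $R$ non-negative---is exactly the strategy the paper uses for $T_{3,m}$ in the \emph{subsequent} lemma (not the preceding one; you have the order reversed). It works fine here too. The trade-off: the paper's filtration trick buys a shorter computation and an explicit geometric realization of the spectral sequence inside the Floer complex, while your approach is more uniform with the $T_{3,m}$ argument but requires carrying out and recording the extra homology computation of $\hfk_{n}(T_{2,2k+1})$.
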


\begin{proof} Let $[n]_{q} = \frac{q^{n}-q^{-n}}{q-q^{-1}}$. In \cite{Cautis2015}, Cautis shows that the Poincare polynomial $\mathcal{P}_{n}$ of $H_{n}(T_{2,2k+1})$ is given by 

\[  \mathcal{P}_{n}(T_{2,2k+1}, q, t) = 1+[n-1]_{q} \Big[ q^{n} \sum_{i=0}^{k} t^{2i}q^{-4i} + t^{3}q^{-n-4}\sum_{i=0}^{k-1}  t^{2i}q^{-4i} \Big] \]

\noindent
The grading $\mathbf{gr}_{n} +\frac{n}{2}\mathbf{gr}_{v}$ corresponds to substituting $t=q^{n}$:
\[ \mathcal{P}_{n}(T_{2,2k+1}, q, q^{n})= 1+[n-1]_{q} \Big[ q^{n} \sum_{i=0}^{k} q^{2i(n-2)} + q^{2n-4}\sum_{i=0}^{k-1} q^{2i(n-2)}  \Big] \]

On the knot Floer side, the master complex $\cfk_{U,V}(T_{2,2k+1})$ is given by 

\[ x_{0} \xrightarrow{U} x_{1} \xleftarrow{V} x_{2} \xrightarrow{U} x_{3} \xleftarrow{V}... \xrightarrow{U} x_{2k-1} \xleftarrow{V} x_{2k} \]

\noindent
Substituting $V=nU^{n-1}$, $UV=0$ gives 
\[ x_{0} \xrightarrow{U} x_{1} \xleftarrow{nU^{n-1}} x_{2} \xrightarrow{U} x_{3} \xleftarrow{nU^{n-1}}... \xrightarrow{U} x_{2k-1} \xleftarrow{nU^{n-1}} x_{2k} \]

\noindent
If we filter the complex by blocking discs which pass through $w$, this complex becomes
\[ x_{0} \xrightarrow{0} x_{1} \xleftarrow{nU^{n-1}} x_{2} \xrightarrow{0} x_{3} \xleftarrow{nU^{n-1}}... \xrightarrow{0} x_{2k-1} \xleftarrow{nU^{n-1}} x_{2k} \]

There is a spectral sequence from the homology of this complex to $\hfk_{2}(T_{2,2k+1})$. But the homology of this complex is given by 
\[ 1+[n-1]_{q} \Big[ q^{n} \sum_{i=0}^{k} q^{2i(n-2)} + q^{2n-4}\sum_{i=0}^{k-1} q^{2i(n-2)}  \Big] \]

\noindent
where the $1$ corresponds to $x_{0}$, the terms in the first sum correspond to $U^{i}x_{j}$ for $i \in \{1,...,n-1\}$, $j$ even, and the terms in the second sum correspond to $U^{i}x_{j}$ for $i \in \{0,...,n-2\}$, $j$ odd. This proves the lemma.

\end{proof}

\begin{lem} \label{lem5.8}

Let $K$ be the $(3,m)$ torus knot, and let $H^{conj}_{n}(K)$ be the homology conjectured by Gorsky and Lewark in \cite{GorskyLewark}  to be the $\sln$ homology of $K$. There is a spectral sequence from ungraded $H^{conj}_{n}(K)$ to ungraded $\hfk_{n}(K)$.

\end{lem}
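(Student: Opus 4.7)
The plan is to follow the strategy of Lemma \ref{lem5.7}, now applied to the more intricate master complex for the $(3,m)$ torus knot. Ozsv\'ath--Szab\'o give $\cfk_{U,V}(T_{3,m})$ as an explicit ``zigzag'' complex with generators connected by arrows labeled by monomials in $U$ and $V$ of the form $U,\, U^{2},\, V,\, V^{2}$; the same complex appears above in the body of the paper for $n=2$. Applying the substitution $V=nU^{n-1}$ together with $UV=0$ (equivalently $U^{n}=0$) yields $\cfk_{n}(T_{3,m})$; a key simplification is that every $V^{2}$-arrow becomes $n^{2}U^{2(n-1)}=0$ in $\Q[U]/(U^{n})$ for $n\geq 2$, so only arrows labeled $U$, $U^{2}$, and $nU^{n-1}$ survive.

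I would then filter $\cfk_{n}(T_{3,m})$ by blocking discs through $w$, exactly as in the proof of Lemma \ref{lem5.7}. The $E_{0}$ page retains only the arrows derived from $V$ (now $nU^{n-1}$), while the $U$- and $U^{2}$-arrows are pushed onto higher pages. The $E_{0}$ complex then decomposes into isolated free $\Q[U]/(U^{n})$ summands (wherever a generator is surrounded only by $V^{2}$-arrows, which have died) together with two-term pieces of the form $\Q[U]/(U^{n})\xrightarrow{\,nU^{n-1}\,}\Q[U]/(U^{n})$. This makes $\dim E_{0}$ an explicit function of the parameters: each isolated generator contributes $n$ and each two-term piece contributes $2n-2$.

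The next step is to write down the Gorsky--Lewark conjectural formula for $H^{conj}_{n}(T_{3,m})$ and to verify that its total dimension matches $\dim E_{0}$, handling the two cases $m\equiv 1,2\pmod{3}$ separately. Once this identification is made (as ungraded vector spaces), the filtration spectral sequence from $E_{0}$ to $\hfk_{n}(T_{3,m})$ produces the advertised ungraded spectral sequence from $H^{conj}_{n}(T_{3,m})$ to $\hfk_{n}(T_{3,m})$.

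The main obstacle will be this final dimension-matching identity. The Gorsky--Lewark formula is considerably more involved than Cautis's closed form for $T_{2,m}$, so while the overall strategy mirrors Lemma \ref{lem5.7}, the bookkeeping required to reconcile it with the blocked $E_{0}$ count, uniformly in $n$ and in both residues of $m \bmod 3$, is the nontrivial computational step. The fact that we can only assert an ungraded spectral sequence (rather than one respecting $\gr_{n}$) presumably reflects that the filtration by blocking $w$ does not interact with the Gorsky--Lewark gradings as cleanly as it does in the $(2,m)$ case, where the $E_{0}$-homology literally reproduced the full Poincar\'e polynomial.
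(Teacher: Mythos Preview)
Your strategy of realizing $H^{conj}_{n}$ as a page of a filtration spectral sequence on $\cfk_{n}(T_{3,m})$ cannot work for $n\geq 3$. The paper itself points this out immediately after the lemma: for large $k$, $\dim H^{conj}_{n}(T_{3,3k+1})$ exceeds the dimension of the \emph{entire} complex $\cfk_{n}(T_{3,3k+1})$, so no page of any filtration on $\cfk_{n}$ can have the right size. Concretely, your $w$-blocked $E_{1}$ page has dimension $(2k+1)n + k(2n-2) = 4kn + n - 2k$, while the Gorsky--Lewark formula evaluated at $q=t=1$ gives $n-2k+4nk+6k^{2}(n-2)$. These differ by $6k^{2}(n-2)$, which is strictly positive for every $n\geq 3$ and $k\geq 1$; the ``dimension-matching identity'' you flag as the main obstacle is in fact false. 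This is exactly why the statement is only about \emph{ungraded} spectral sequences: the approach of Lemma~\ref{lem5.7} simply does not extend.

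The paper's proof is far more elementary. An ungraded spectral sequence of finite-dimensional $\Q$-vector spaces from $A$ to $B$ exists if and only if $\dim A \geq \dim B$ and $\dim A \equiv \dim B \pmod 2$, since taking homology with respect to any differential drops dimension by an even number. So the paper just computes both dimensions: it evaluates the Gorsky--Lewark formula at $q=t=1$ to get $\dim H^{conj}_{n}(T_{3,3k+1}) = n - 2k + 4nk + 6k^{2}(n-2)$, and computes $\hfk_{n}(T_{3,3k+1})$ directly from the master complex (after substituting $V=nU^{n-1}$, $UV=0$) to get $\dim \hfk_{n}(T_{3,3k+1}) = n + 6k$ for $n\geq 3$. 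One then checks the inequality and observes that both dimensions are congruent to $n$ modulo $2$. The case $m=3k+2$ is handled the same way.
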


\begin{proof} The formula for $H^{conj}_{n}(T_{3,3k+1})$ is hard to deal with in full generality, but taking the limit as $q,t \mapsto 1$ yields 
\[ \dim(H^{conj}_{n}(T_{3,3k+1})) = n -2k +4nk + 6k^{2}(N-2)  \]

\noindent
where $n \ge 2$. Note that when $n=2$, this agrees with (\ref{eq5.4}). Recall that the master complex $\cfk_{U,V}(T_{3,3k+1})$ is given by

\small

\[      x_{0} \xrightarrow{U} x_{1} \xleftarrow{V^{2}} x_{2}\xrightarrow{U} ... \xrightarrow{U} x_{2k-1} \xleftarrow{V^{2}} x_{2k} \xrightarrow{U^{2}}x_{2k+1} \xleftarrow{V}  ... \xleftarrow{V}x_{4k-2}\xrightarrow{U^{2}}x_{4k-1} \xleftarrow{V}  x_{4k}  \]

\normalsize

\noindent
Plugging in $V=nU^{n-1}$, $UV=0$ and taking homology yields 

\[ \dim(\hfk_{n}(T_{3,3k+1})) = \begin{cases} 
     2+4k & \textrm{ if $n=2$ } \\
      n+6k &  \textrm{ if $n \ge 3$} \\ 
   \end{cases} \]

\noindent
Thus, $\dim(H_{n}^{conj}(T_{3,3k+1})) \ge \dim(\hfk_{n}(T_{3,3k+1}))$ for all $n \ge 2$, and we have already shown that they agree for $n=1$. In order to complete the proof, we need to show that the dimensions agree modulo 2. However, by inspection, both $\dim(H_{n}^{conj}(T_{3,3k+1}))$ and $\dim(\hfk_{n}(T_{3,3k+1}))$ are congruent to $n$ modulo 2. The proof for $T_{3,3k+2}$ follows from the same argument.

\end{proof}

Assuming the conjectured formulas for the $\sln$ homology of $T_{3,m}$ are correct, this lemma gives some interesting behavior of the conjectured spectral sequences. For 2-bridge knots, in all the examples we have computed, the spectral sequence from $H_{n}(K)$ to $\hfk_{n}(K)$ can be obtained by by the same method as Lemma \ref{lem5.7}: filtering the master complex by the $w$ basepoint. The same is true for 3-bridge knots when $n=2$ - the only known examples of knots with $\rank(\overline{H}_{2}(K)) > \rank(\widehat{\hfk}(K))$ have bridge number at least 4. However, we can see from Lemma \ref{lem5.8} that for $n \ge 3$, there are 3-bridge knots for which the spectral sequence is not induced by any filtration on the knot Floer complex, as for large $k$ the homology $H^{conj}_{n}(T_{3,3k+1})$ has larger rank than the entire complex $\cfk_{n}(T_{3,3k+1})$.

%
%
%
%
%
%
%
%
%
%
%
%
%
%

\bibliography{TriplyGradedHomology}{}
\bibliographystyle{plain}

\end{document}